\theoremstyle{plain}
\newtheorem{thm}[subsection]{Theorem}
\newtheorem{prop}[subsection]{Proposition}
\theoremstyle{definition}
\newtheorem{defn}[subsection]{Definition}
\theoremstyle{remark}
\newtheorem{rem}[subsection]{Remark}
\let\c@equation\c@subsection
\newcommand{\NN}{{ \mathbb{N} }}
\newcommand{\capC}{{ \mathcal{C} }}
\newcommand{\capP}{{ \mathcal{P} }}
\newcommand{\capX}{{ \mathcal{X} }}
\newcommand{\capY}{{ \mathcal{Y} }}
\newcommand{\Ev}{{ \mathrm{Ev} }}
\newcommand{\id}{{ \mathrm{id} }}
\newcommand{\Space}{{ \mathsf{S} }}
\newcommand{\Ho}{{ \mathsf{Ho} }}
\newcommand{\sSet}{{ \mathsf{sSet} }}
\newcommand{\Mod}{{ \mathsf{Mod} }}
\newcommand{\M}{{ \mathsf{M} }}
\newcommand{\Set}{{ \mathsf{Set} }}
\newcommand{\K}{{ \mathsf{K} }}
\newcommand{\Kt}{{ \tilde{\mathsf{K}} }}
\newcommand{\coAlg}{{ \mathsf{coAlg} }}
\newcommand{\res}{{ \mathsf{res} }}
\newcommand{\BK}{{ \mathsf{BK} }}
\newcommand{\CGHaus}{{ \mathsf{CGHaus} }}
\newcommand{\coAlgK}{{ \coAlg_\K }}
\newcommand{\coAlgKt}{{ \coAlg_{\tilde{\K}} }}
\newcommand{\Loop}{{ \Omega }}
\newcommand{\Loopt}{{ \tilde{\Omega} }}
\newcommand{\Susp}{{ \Sigma }}
\newcommand{\Smash}{{ \,\wedge\, }}
\newcommand{\tensor}{{ \otimes }}
\newcommand{\tensordot}{{ \dot{\tensor} }}
\newcommand{\wequiv}{{ \ \simeq \ }}
\newcommand{\Iso}{{  \ \cong \ }}
\newcommand{\rarrow}{{ \rightarrow }}
\newcommand{\function}[3]{{ {#1}\colon\thinspace{#2}\rarrow{#3} }}
\newcommand{\functionlong}[3]{{ {#1}\colon\thinspace{#2}\longrightarrow{#3} }}
\DeclareMathOperator*{\colim}{colim}
\DeclareMathOperator*{\holim}{holim}
\DeclareMathOperator{\Hombold}{\mathbf{Hom}}
\DeclareMathOperator{\hombold}{\mathbf{hom}}
\DeclareMathOperator{\Map}{Map}
\DeclareMathOperator{\Cobar}{Cobar}
\DeclareMathOperator{\Tot}{Tot}
\DeclareMathOperator{\hofib}{hofib}
\DeclareMathOperator{\iter}{iterated}
\title[Suspension spectra]{Suspension spectra and higher stabilization}
\author{Jacobson R. Blomquist}
\author{John E. Harper}
\address{Department of Mathematics, The Ohio State University, 231 West 18th Ave, Columbus, OH 43210, USA}
\email{blomquist.9@osu.edu}
\address{Department of Mathematics, The Ohio State University, Newark, 1179 University Dr, Newark, OH 43055, USA}
\email{harper.903@math.osu.edu}
\begin{document}

\begin{abstract}
We prove that the stabilization of spaces functor---the classical construction of associating a spectrum to a pointed space by tensoring with the sphere spectrum---satisfies homotopical descent on objects and morphisms. This is the stabilization analog of the Quillen-Sullivan theory main result that the rational chains (resp. cochains) functor participates in a derived equivalence with certain coalgebra (resp. algebra) complexes, after restriction to 1-connected spaces up to rational equivalence. In more detail, we prove that the stabilization of spaces functor participates in a derived equivalence with certain coalgebra spectra (where the stabilization construction naturally lands), after restriction to 1-connected spaces up to weak equivalence. This resolves in the affirmative the infinite case, involving stabilization and suspension spectra, of a question/conjecture posed by Tyler Lawson on (iterated) suspension spaces almost ten years ago. A key ingredient of our proof, of independent interest, is a higher stabilization theorem for spaces that provides strong estimates for the uniform cartesian-ness of certain cubical diagrams associated to $n$-fold iterations of the spaces-level stabilization map; this is the stabilization analog of Dundas' higher Hurewicz theorem.
\end{abstract}

\maketitle

\section{Introduction}

We have written this paper simplicially: in other words, ``space'' means ``simplicial set'' unless otherwise noted; see Dwyer-Henn \cite {Dwyer_Henn} for a useful introduction to these ideas, together with Bousfield-Kan \cite{Bousfield_Kan}, Goerss-Jardine \cite{Goerss_Jardine}, and Hovey \cite{Hovey}. We work in the category of symmetric spectra (see Hovey-Shipley-Smith \cite{Hovey_Shipley_Smith} and Schwede \cite{Schwede_book_project}), equipped with the injective stable model structure, so that ``$S$-modules'' means ``symmetric spectra'', which are the same as modules over the sphere spectrum $S$; in particular, fibrant $S$-modules enjoy the property of being $\Omega$-spectra \cite[1.4]{Hovey_Shipley_Smith} that are objectwise Kan complexes. For useful results and techniques involving spectra in closely related contexts, see Bousfield-Friedlander \cite{Bousfield_Friedlander}, Elmendorf-Kriz-Mandell-May \cite{EKMM}, and Jardine \cite{Jardine_generalized_etale}.

\subsection{The spaces-level stabilization map}

If $X$ is a pointed space, the stabilization map has the form
\begin{align}
\label{eq:hurewicz_map_introduction}
  \pi_*(X)\rarrow \pi_*^{\mathrm{s}}(X)=\colim\nolimits_r\pi_{*+r}(\Sigma^r X)
\end{align}
This comparison map between homotopy groups and stable homotopy groups comes from a spaces-level stabilization map of the form
\begin{align}
\label{eq:hurewicz_map_spaces_level_introduction}
  X\rightarrow\tilde{\Loop}^\infty\Susp^\infty(X)
\end{align}
and applying $\pi_*$ to \eqref{eq:hurewicz_map_spaces_level_introduction} recovers the map \eqref{eq:hurewicz_map_introduction}; here, $\tilde{\Loop}^\infty=\Loop^\infty F$ (Definition \ref{defn:fibrant_replacement}) denotes the right-derived functor of the underlying $0$-th space $\Loop^\infty=\Ev_0$ functor and $\Susp^\infty=S\tensor-$ denotes the stabilization functor given by tensoring with the sphere spectrum $S$.

\subsection{Iterating the stabilization map}

With a spaces-level stabilization map in hand, it is natural to form a cosimplicial resolution of $X$ with respect to $\tilde{\Loop}^\infty\Sigma^\infty$ of the form
\begin{align}
\label{eq:homology_resolution_introduction}
\xymatrix{
  X\ar[r] &
  \tilde{\Loop}^\infty\Sigma^\infty(X)\ar@<-0.5ex>[r]\ar@<0.5ex>[r] &
  (\tilde{\Loop}^\infty\Sigma^\infty)^2(X)
  \ar@<-1.0ex>[r]\ar[r]\ar@<1.0ex>[r] &
  (\tilde{\Loop}^\infty\Sigma^\infty)^3(X)\cdots
  }
\end{align}
showing only the coface maps. The homotopical comonad $\tilde{K}=\Susp^\infty\Loopt^\infty$, which is the derived functor of the comonad $K=\Susp^\infty\Loop^\infty$ associated to the $(\Susp^\infty, \Omega^\infty)$ adjunction, can be thought of as encoding the spectra-level co-operations on the suspension spectra; compare with \cite{Blomquist_Harper_integral_chains, Blomquist_Harper_iterated_suspension} for integral chains and iterated suspension.

By analogy with the techniques in Bousfield-Kan \cite{Bousfield_Kan}, by iterating the spaces-level stabilization map \eqref{eq:hurewicz_map_spaces_level_introduction} Carlsson \cite{Carlsson_equivariant}, and subsequently Arone-Kankaanrinta \cite{Arone_Kankaanrinta}, study the cosimplicial resolution of $X$ with respect to $\tilde{\Loop}^\infty\Sigma^\infty$, and taking the homotopy limit of the resolution \eqref{eq:homology_resolution_introduction} produce the $\tilde{\Loop}^\infty\Susp^\infty$-completion map
\begin{align}
\label{eq:comparison_map_Z_completion}
  X\rarrow X^\wedge_{\tilde{\Loop}^\infty\Susp^\infty}
\end{align}
We will show that this completion map fits into a derived equivalence with comparison map the stabilization $\Sigma^\infty$ functor (but landing in the naturally occurring category of coalgebraic objects).

\subsection{The main result}

In this paper we shall prove the following theorem, that the stabilization of spaces functor---the classical construction of associating a spectrum to a pointed space by tensoring with the sphere spectrum---satisfies homotopical descent on objects and morphisms. This is the stabilization analog of the Quillen-Sullivan theory main result \cite{Quillen_rational, Sullivan_infinitesimal} that the rational chains (resp. cochains) functor participates in a derived equivalence with certain coalgebra (resp. algebra) complexes, after restriction to 1-connected spaces up to rational equivalence. In more detail, we prove that the stabilization of spaces functor participates in a derived equivalence with certain coalgebra spectra (where the stabilization construction naturally lands), after restriction to 1-connected spaces up to weak equivalence. This resolves in the affirmative the infinite case, involving stabilization and suspension spectra, of a question/conjecture posed by Tyler Lawson on (iterated) suspension spaces almost ten years ago.

\begin{thm}
\label{MainTheorem}
The fundamental adjunction \eqref{eq:fundamental_adjunction_comparing_spaces_with_coAlgK} comparing pointed spaces to coalgebra spectra over the comonad $\K=\Susp^\infty\Loop^\infty$ via stabilization $\Susp^\infty$
\begin{align}
\label{eq:fundamental_adjunction_comparing_spaces_with_coAlgK}
\xymatrix{
  \Space_*\ar@<0.5ex>[r]^-{\Susp^\infty} & \coAlgK\ar@<0.5ex>[l]^-{\lim_\Delta C}
}
\end{align}
 induces a derived adjunction of the form
\begin{align}
\label{eq:derived_fundamental_adjunction_mapping_spaces}
  \Map_{\coAlgKt}(\Susp^\infty X,Y)\wequiv
  \Map_{\Space_*}(X,\holim\nolimits_\Delta \mathfrak{C}(Y))
\end{align}
that is a derived equivalence after restriction to the full subcategories of $1$-connected spaces and $1$-connected $\tilde{\K}$-coalgebra spectra; more precisely:
\begin{itemize}
\item[(a)] If $Y$ is a $1$-connected $\tilde{\K}$-coalgebra spectrum, then the derived counit map
\begin{align*}
  \Susp^\infty\holim\nolimits_\Delta \mathfrak{C}(Y)\xrightarrow{\wequiv} Y
\end{align*}
associated to the derived adjunction \eqref{eq:derived_fundamental_adjunction_mapping_spaces} is a weak equivalence; i.e., stabilization $\Susp^\infty$ is homotopically essentially surjective on $1$-connected coalgebra spectra over $\tilde{\K}$, and hence homotopical descent is satisfied on such objects.
\item[(b)] If $X'$ is a $1$-connected space, then the derived unit map
\begin{align*}
  X'\xrightarrow{\wequiv}\holim\nolimits_\Delta 
  \mathfrak{C}(\Susp^\infty X')
\end{align*} 
associated to the derived adjunction \eqref{eq:derived_fundamental_adjunction_mapping_spaces} is tautologically the $\tilde{\Loop}^\infty\Susp^\infty$-completion map $X'\rarrow {X'}^\wedge_{\tilde{\Loop}^\infty\Susp^\infty}$, and hence is a weak equivalence by \cite{Arone_Kankaanrinta, Carlsson}. In particular, the stabilization functor induces a weak equivalence
\begin{align}
  \label{eq:homotopically_fully_faithful_chains}
  \Susp^\infty\colon\thinspace\Map^h_{\Space_*}(X,X')\wequiv\Map_\coAlgKt(\Susp^\infty X,\Susp^\infty X')
\end{align}
on mapping spaces; i.e., stabilization $\Susp^\infty$ is homotopically fully faithful on $1$-connected spaces and hence homotopical descent is satisfied on such morphisms.
\end{itemize}
Here, $\Map^h_{\Space_*}(X,X')$ denotes the realization of the Dwyer-Kan \cite{Dwyer_Kan_function_complexes} homotopy function complex.
\end{thm}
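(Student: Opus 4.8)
The plan is to prove the two halves $(a)$ and $(b)$ by rather different routes, and to recognize at the outset that part $(b)$ is essentially handed to us: the derived unit map is tautologically the $\tilde{\Loop}^\infty\Susp^\infty$-completion map, so the real content of $(b)$ reduces to the convergence statement that the completion map is a weak equivalence on $1$-connected spaces. I would verify the tautological identification by unwinding the definition of the derived adjunction: the homotopy limit $\holim_\Delta\mathfrak{C}(\Susp^\infty X')$ computes the totalization of the cosimplicial object built from the comonad $\tilde{\K}=\Susp^\infty\Loopt^\infty$, which by inspection is precisely the cosimplicial resolution \eqref{eq:homology_resolution_introduction}, whose totalization is by definition $X'^\wedge_{\tilde{\Loop}^\infty\Susp^\infty}$. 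Once this identification is in place, convergence follows directly from the cited work of Arone-Kankaanrinta and Carlsson. The equivalence on mapping spaces \eqref{eq:homotopically_fully_faithful_chains} is then a formal consequence of $(a)$ together with the unit equivalence, via the derived adjunction isomorphism.

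For part $(a)$, the strategy is a resolution-and-descent argument. Given a $1$-connected $\tilde{\K}$-coalgebra spectrum $Y$, I would show the derived counit $\Susp^\infty\holim_\Delta\mathfrak{C}(Y)\to Y$ is a weak equivalence by first treating the case of cofree coalgebras, where the adjunction makes the counit a retract of a canonical equivalence, and then bootstrapping to general $Y$ through the cosimplicial cobar resolution of $Y$ by cofree coalgebras. The key device here is the cobar/cosimplicial resolution of any coalgebra by cofree ones: one writes $Y$ as the totalization of its canonical cosimplicial resolution $\mathfrak{C}(Y)$ levelwise landing in cofree coalgebras, verifies the counit is an equivalence in each cosimplicial degree, and then commutes $\Susp^\infty$ past the homotopy limit. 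The delicate point is exactly this commutation: $\Susp^\infty$ does not commute with arbitrary homotopy limits, so one needs a connectivity/convergence estimate ensuring the relevant cosimplicial diagram is sufficiently highly connected in each degree for the totalization to converge and for stabilization to commute with it in the limit.

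This connectivity control is where I expect the main obstacle to lie, and it is precisely the role of the higher stabilization theorem advertised in the abstract as the stabilization analog of Dundas' higher Hurewicz theorem. The heart of the matter is that stabilization $\Susp^\infty$ commutes with homotopy limits only after one establishes strong, uniform bounds on the cartesian-ness of the cubical diagrams obtained by iterating the spaces-level stabilization map \eqref{eq:hurewicz_map_spaces_level_introduction}. Concretely, I would reduce the convergence of the homotopy-limit tower to a statement that the $n$-cubes built from the $n$-fold iterates of $X\to\tilde{\Loop}^\infty\Susp^\infty X$ become $k$-cartesian with $k$ growing without bound (linearly in $n$, say), uniformly over the relevant range determined by the $1$-connectedness hypothesis. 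Granting the higher stabilization theorem as a black box, the remaining work is a standard but careful homotopy-limit spectral sequence or tower-convergence argument: one shows the coface codegeneracy structure forces the totalization tower to converge, and that the induced map on each successive layer is an equivalence because stabilization already was an equivalence on the $1$-connected building blocks. The $1$-connectedness hypothesis enters exactly to guarantee the lowest-dimensional connectivity needed to seed the inductive improvement supplied by higher stabilization, after which the cartesian-ness estimates propagate the equivalence all the way up the tower.
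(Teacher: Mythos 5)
Your proposal follows essentially the same route as the paper: part (b) is the tautological identification of the derived unit with the $\tilde{\Loop}^\infty\Susp^\infty$-completion map plus the cited Carlsson/Arone--Kankaanrinta convergence theorem, and part (a) is the factorization $\Susp^\infty\holim\nolimits_\Delta \mathfrak{C}(Y)\to\holim\nolimits_\Delta \Susp^\infty\mathfrak{C}(Y)\to Y$, with the second map handled by the extra-codegeneracy contraction of the cofree cobar resolution and the first by uniform cartesian-ness estimates, growing linearly in $n$ and seeded by $1$-connectedness, supplied by the higher stabilization theorem. The only phrase to repair is the claim that the comparison of $\mathrm{Tot}$ towers is an equivalence on successive layers (it is not; $\Susp^\infty$ does not commute with the finite homotopy limits either), but your alternative formulation---that the connectivity of $\Susp^\infty\holim\nolimits_{\Delta^{\leq n}}\mathfrak{C}(Y)\to\holim\nolimits_{\Delta^{\leq n}}\Susp^\infty\mathfrak{C}(Y)$ grows without bound---is exactly what the paper proves and is what the argument actually needs.
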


See Arone-Ching \cite{Arone_Ching_classification} and Hess \cite{Hess} for a discussion of the homotopical descent ideas that have motivated and underlie much of our work, Carlsson-Milgram \cite{Carlsson_Milgram} and May \cite{May} for related ideas, Edwards-Hastings \cite{Edwards_Hastings} for a nice discussion of stabilization and abelianization in the spirit of this paper, and Behrens-Rezk \cite{Behrens_Rezk} for an interesting survey of closely related ideas circulating in the spectral algebra context.

\subsection{Classification and characterization theorems}
The following are corollaries of the main result.

\begin{thm}[Classification theorem]
\label{thm:classification}
A pair of $1$-connected pointed spaces $X$ and $X'$ are weakly equivalent if and only if the suspension spectra $\Susp^\infty X$ and $\Susp^\infty X'$ are weakly equivalent as derived $\Kt$-coalgebra spectra.
\end{thm}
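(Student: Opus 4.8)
The plan is to derive Theorem \ref{thm:classification} directly from Theorem \ref{MainTheorem}, treating it as a formal consequence of the homotopical descent already established. The key observation is that a derived equivalence of the kind in \eqref{eq:derived_fundamental_adjunction_mapping_spaces}, being homotopically fully faithful on $1$-connected spaces (part (b)), automatically preserves and reflects weak equivalences between such objects. So the real content is to translate ``homotopically fully faithful'' into ``reflects isomorphisms in the homotopy category.''

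First I would prove the easy direction: if $X \wequiv X'$ as $1$-connected pointed spaces, then applying the stabilization functor $\Susp^\infty$ (which is a left Quillen functor, hence preserves weak equivalences between cofibrant objects after cofibrant replacement) yields a weak equivalence $\Susp^\infty X \wequiv \Susp^\infty X'$ of $\Kt$-coalgebra spectra. One should note here that $\Susp^\infty$ lands naturally in $\coAlgK$, so the comparison is genuinely as derived $\Kt$-coalgebras, not merely as underlying spectra.

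For the converse---the substantive direction---I would argue as follows. Suppose $\Susp^\infty X$ and $\Susp^\infty X'$ are weakly equivalent as derived $\Kt$-coalgebra spectra. By part (b) of Theorem \ref{MainTheorem}, the derived unit maps $X \xrightarrow{\wequiv} \holim_\Delta \mathfrak{C}(\Susp^\infty X)$ and $X' \xrightarrow{\wequiv} \holim_\Delta \mathfrak{C}(\Susp^\infty X')$ are weak equivalences. The assumed weak equivalence $\Susp^\infty X \wequiv \Susp^\infty X'$ of derived $\Kt$-coalgebras induces, upon applying the derived right adjoint $\holim_\Delta \mathfrak{C}(-)$ (which preserves weak equivalences between $1$-connected coalgebra spectra by the mapping-space equivalence \eqref{eq:homotopically_fully_faithful_chains}), a weak equivalence $\holim_\Delta \mathfrak{C}(\Susp^\infty X) \wequiv \holim_\Delta \mathfrak{C}(\Susp^\infty X')$. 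Composing with the two unit equivalences and their inverses in the homotopy category then produces the desired zigzag of weak equivalences $X \wequiv X'$.

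The main obstacle I anticipate is not conceptual but bookkeeping: one must verify that the equivalence hypothesized ``as derived $\Kt$-coalgebra spectra'' is exactly the notion to which the derived adjunction \eqref{eq:derived_fundamental_adjunction_mapping_spaces} applies, and that $\Susp^\infty X, \Susp^\infty X'$ indeed satisfy the $1$-connectedness hypothesis needed to invoke both parts (a) and (b)---this follows since $X, X'$ are $1$-connected and $\Susp^\infty$ preserves connectivity. One should also take care that ``weakly equivalent as derived coalgebras'' is interpreted as the existence of a zigzag of weak equivalences in the relevant homotopy category of $\Kt$-coalgebra spectra, so that the argument is invariant under such zigzags; the fully faithful statement \eqref{eq:homotopically_fully_faithful_chains} then guarantees that $\holim_\Delta \mathfrak{C}(-)$ transports these zigzags back to the space level without loss of information.
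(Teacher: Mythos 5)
Your proposal is correct and takes essentially the approach the paper intends: the paper states Theorem \ref{thm:classification} as a corollary of Theorem \ref{MainTheorem}, deduced exactly as you do by combining the derived unit equivalences of part (b) with the fact that $\Susp^\infty$ and $\holim_\Delta \mathfrak{C}(-)$ descend to the homotopy categories. The only cosmetic point is that the preservation of weak equivalences by $\holim_\Delta \mathfrak{C}(-)$ is most cleanly justified via Proposition \ref{prop:weak_equivalence_if_and_only_if_iso_in_homotopy_category} (weak equivalences of derived $\Kt$-coalgebras are isomorphisms in $\Ho(\coAlgKt)$, and functors preserve isomorphisms) rather than via \eqref{eq:homotopically_fully_faithful_chains}, but this does not affect the validity of the argument.
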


\begin{thm}[Classification of maps theorem]
\label{thm:classification_maps}
Let $X,X'$ be pointed spaces. Assume that $X'$ is $1$-connected and fibrant.
\begin{itemize}
\item[(a)] \emph{(Existence)} Given any map $\phi$ in $[\Susp^\infty X,\Susp^\infty X']_\Kt$, there exists a map $f$ in $[X,X']$ such that $\phi=\Susp^\infty(f)$.
\item[(b)] \emph{(Uniqueness)} For each pair of maps $f,g$ in $[X,X']$, $f=g$ if and only if $\Susp^\infty(f)=\Susp^\infty(g)$ in the homotopy category of $\Kt$-coalgebra spectra.
\end{itemize}
\end{thm}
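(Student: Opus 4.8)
The plan is to deduce both statements directly from Theorem \ref{MainTheorem}(b) by passing to path components. The essential input is the weak equivalence \eqref{eq:homotopically_fully_faithful_chains}
\begin{align*}
  \Susp^\infty\colon\thinspace\Map^h_{\Space_*}(X,X')\wequiv
  \Map_\coAlgKt(\Susp^\infty X,\Susp^\infty X'),
\end{align*}
which is available exactly because $X'$ is assumed $1$-connected; note that $X$ enters only as the source and is otherwise unconstrained, matching the hypotheses here. Since a weak equivalence of spaces induces a bijection on $\pi_0$, applying $\pi_0$ produces a bijection
\begin{align*}
  \Susp^\infty\colon\thinspace\pi_0\Map^h_{\Space_*}(X,X')
  \xrightarrow{\iso}
  \pi_0\Map_\coAlgKt(\Susp^\infty X,\Susp^\infty X').
\end{align*}

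The next step is to identify the two sides with the relevant sets of homotopy classes. On the source, $\pi_0$ of the Dwyer-Kan homotopy function complex is the set $[X,X']$ of morphisms in the homotopy category; here we use that every simplicial set is cofibrant, together with the hypothesis that $X'$ is fibrant, so that $\Map^h_{\Space_*}(X,X')$ genuinely computes the derived mapping space. On the target, $\pi_0\Map_\coAlgKt(\Susp^\infty X,\Susp^\infty X')$ is by definition $[\Susp^\infty X,\Susp^\infty X']_\Kt$, the set of morphisms in the homotopy category of $\Kt$-coalgebra spectra. Under these identifications the bijection above becomes
\begin{align*}
  \Susp^\infty\colon\thinspace[X,X']\xrightarrow{\iso}[\Susp^\infty X,\Susp^\infty X']_\Kt.
\end{align*}

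With this bijection in hand the two assertions are immediate. For (a), surjectivity of $\Susp^\infty$ furnishes, for each $\phi$ in $[\Susp^\infty X,\Susp^\infty X']_\Kt$, a class $f$ in $[X,X']$ with $\Susp^\infty(f)=\phi$. For (b), the forward implication is functoriality of $\Susp^\infty$, while the reverse implication is precisely injectivity, so that $\Susp^\infty(f)=\Susp^\infty(g)$ forces $f=g$. I do not expect a genuine obstacle, since Theorem \ref{MainTheorem}(b) already carries all of the homotopical content; the only point demanding care is the bookkeeping of (co)fibrancy needed to identify $\pi_0$ of the homotopy function complexes with morphism sets in the respective homotopy categories, and this is guaranteed by the fibrancy of $X'$ (cofibrancy of $X$ being automatic for simplicial sets).
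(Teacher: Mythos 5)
Your argument is correct and is exactly the route the paper intends: the classification of maps theorem is deduced by applying $\pi_0$ to the weak equivalence \eqref{eq:homotopically_fully_faithful_chains} of Theorem \ref{MainTheorem}(b) (equivalently, Proposition \ref{prop:formal_adjunction_and_iso_argument}, whose hypotheses hold since a $1$-connected $X'$ is $\Loopt^\infty\Susp^\infty$-complete), which identifies $[X,X']\iso[\Susp^\infty X,\Susp^\infty X']_\Kt$ via $[f]\mapsto[\Susp^\infty f]$, giving (a) as surjectivity and (b) as injectivity. Your bookkeeping of the fibrancy of $X'$ and the automatic cofibrancy of $X$ is also the right point to flag.
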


\begin{thm}[Characterization theorem]
\label{thm:characterization}
A $\Kt$-coalgebra spectrum $Y$ is weakly equivalent to the suspension spectrum $\Susp^\infty X$ of some $1$-connected space $X$, via derived $\Kt$-coalgebra maps, if and only if $Y$ is $1$-connected.
\end{thm}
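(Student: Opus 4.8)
The plan is to prove the two implications separately; the forward implication is immediate and the converse carries all of the content.

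For the forward implication, suppose that $Y\wequiv\Susp^\infty X$ via derived $\Kt$-coalgebra maps, for some $1$-connected space $X$. Any such weak equivalence is in particular a weak equivalence of underlying spectra, and hence preserves connectivity, so it suffices to observe that $\Susp^\infty$ sends $1$-connected spaces to $1$-connected spectra; this is immediate since $\pi_i(\Susp^\infty X)=\pi_i^{\mathrm{s}}(X)$ vanishes for $i\leq 1$ when $X$ is $1$-connected, by the stable Hurewicz theorem. Thus $Y$ is $1$-connected.

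For the converse, suppose $Y$ is a $1$-connected $\Kt$-coalgebra spectrum and set $X:=\holim_\Delta\mathfrak C(Y)$. By Theorem \ref{MainTheorem}(a) the derived counit map $\Susp^\infty X\xrightarrow{\wequiv}Y$ is a weak equivalence of derived $\Kt$-coalgebra spectra, so the theorem reduces to verifying that the space $X$ is $1$-connected. The weak equivalence $\Susp^\infty X\wequiv Y$ identifies the integral homology $\tilde H_*(X;\ZZ)$ with the homology of $Y$, which vanishes in degrees $\leq 1$ because $Y$ is $1$-connected; hence $X$ is connected and $H_1(X;\ZZ)=0$.

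Here lies the main obstacle: the vanishing of $\Susp^\infty X\wequiv Y$ in low degrees only controls the homology of $X$, and $H_1(X;\ZZ)=0$ merely says that $\pi_1(X)$ is perfect rather than trivial. To rule out a nontrivial perfect fundamental group I would exploit the infinite loop space structure of $X$: each cosimplicial term $\mathfrak C(Y)^n=\Loopt^\infty\Kt^n Y$ is the derived $0$-th space of a spectrum, and since $\Loopt^\infty$ is a derived right adjoint it commutes with the homotopy limit $\holim_\Delta$, so that $X\wequiv\Loopt^\infty\bigl(\holim_\Delta\Kt^\bullet Y\bigr)$ is itself the underlying infinite loop space of a spectrum. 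In particular $X$ is a simple space whose fundamental group is abelian, whence $\pi_1(X)\iso H_1(X;\ZZ)=0$. Combined with connectivity of $X$ this shows that $X$ is $1$-connected, and together with $\Susp^\infty X\wequiv Y$ this completes the proof. The only delicate points to check carefully are that the cosimplicial space $\mathfrak C(Y)$ is genuinely levelwise an infinite loop space, with cosimplicial structure maps arising from maps of spectra (so that its homotopy limit is computed in infinite loop spaces), and the compatibility of $\Loopt^\infty$ with $\holim_\Delta$ at the derived level.
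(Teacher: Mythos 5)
Your forward implication is fine, and your reduction of the converse to the $1$-connectivity of $X:=\holim_\Delta\mathfrak{C}(Y)$ is the right move; you also correctly isolate the crux, namely that the underlying equivalence $\Susp^\infty X\wequiv Y$ only controls $\tilde{H}_*(X;\ZZ)$ and so leaves open a nontrivial perfect $\pi_1(X)$ (indeed, acyclic spaces with nontrivial perfect fundamental group have $\infty$-connected suspension spectra, so no homology argument alone can close this). The problem is that your proposed fix is false. The cosimplicial cobar construction $\mathfrak{C}(Y)=\Cobar(\Loopt^\infty,\Kt,Y)$ is \emph{not} $\Loopt^\infty$ applied to a cosimplicial spectrum: the cofaces $d^i$ for $i\geq 1$ and the codegeneracies are of the form $\Loopt^\infty(g)$ for maps of spectra $g$, but the coface $d^0=m\id$ is the unit $\eta$ of the $(\Susp^\infty,\Loop^\infty)$ adjunction evaluated at the \emph{space} $\Loopt^\infty\Kt^n Y$ --- i.e., the spaces-level stabilization map $Z\rarrow\Loop^\infty\Susp^\infty Z$ --- which is not an infinite loop map. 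Hence $\holim_\Delta\mathfrak{C}(Y)$ does not factor through $\Loopt^\infty$ of a homotopy limit of spectra. If it did, then by Theorem \ref{MainTheorem}(b) every $1$-connected space $X'$ would satisfy $X'\wequiv\holim_\Delta\mathfrak{C}(\Susp^\infty X')$ and hence be an infinite loop space; taking $X'=S^2$, which is not even an $H$-space, gives a contradiction. So the step where you conclude that $\pi_1(X)$ is abelian has a genuine gap that cannot be repaired as stated.

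The intended route is through the connectivity estimates that drive the paper. Theorems \ref{thm:estimating_connectivity_of_maps_in_tower_C_of_Y} and \ref{thm:connectivities_for_map_into_n_th_stage} already assert that the stages of the tower $\{\holim_{\Delta^{\leq n}}\mathfrak{C}(Y)\}_n$, and $\holim_\Delta\mathfrak{C}(Y)$ itself, are $1$-connected objects when $Y$ is a $1$-connected $\Kt$-coalgebra. Concretely: the bottom stage is $\holim_{\Delta^{\leq 0}}\mathfrak{C}(Y)=\Loopt^\infty Y$, which is $1$-connected because $Y$ is a $1$-connected spectrum, and the natural map $\holim_\Delta\mathfrak{C}(Y)\rarrow\Loopt^\infty Y$ is $3$-connected by Theorem \ref{thm:connectivities_for_map_into_n_th_stage} with $n=0$; in particular it induces an isomorphism on $\pi_1$, so $\pi_1(X)\iso\pi_1(\Loopt^\infty Y)=0$. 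Replacing your infinite-loop-space step by this appeal to the tower estimates completes the converse.
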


\subsection{Strategy of attack and related work}

We are leveraging a line of attack developed in Ching-Harper \cite{Ching_Harper_derived_Koszul_duality} for resolving the $0$-connected case of the Francis-Gaitsgory conjecture \cite{Francis_Gaitsgory}, together with a powerful modification of that strategy developed in \cite{Blomquist_Harper_integral_chains} for resolving the integral chains problem, together with Cohn's \cite{Cohn} work showing that this strategy of attack extends to homotopy coalgebras over the associated homotopical comonad (see Blumberg-Riehl \cite{Blumberg_Riehl}); essentially, we establish and exploit uniform cartesian-ness estimates to show that the $\Loopt^\infty\Susp^\infty$-completion map studied in Carlsson \cite{Carlsson_equivariant}, and subsequently in Arone-Kankaanrinta \cite{Arone_Kankaanrinta}, participates in a derived equivalence between spaces and coalgebra spectra over the homotopical stabilization comonad, after restricting to $1$-connected spaces. A key ingredient underlying our homotopical estimates is the higher stabilization theorem proved in this paper; it can be thought of as the stabilization analog of Dundas' \cite{Dundas_relative_K_theory} higher Hurewicz theorem estimates for integral chains that play a key role in \cite{Blomquist_Harper_integral_chains} and underlies the main results in Dundas-Goodwillie-McCarthy \cite{Dundas_Goodwillie_McCarthy}.

The results of Hopkins \cite{Hopkins_iterated_suspension}, and the subsequent work of Goerss \cite{Goerss_desuspension} and Klein-Schwanzl-Vogt \cite{Klein_Schwanzl_Vogt} were motivating for us. This paper is a companion to our work in \cite{Blomquist_Harper_iterated_suspension} that resolves the original question/conjecture of Tyler Lawson \cite{Lawson_conjecture} on (iterated) suspension spaces; we regard this stabilization work as the stronger result: in the limit case we are throwing away more information, nevertheless, we find that homotopical descent on objects and morphisms still holds.

\subsection{Commuting stabilization with holim of a cobar construction}
Our main result reduces to proving our main technical theorem that the left derived stabilization functor $\Susp^\infty$ commutes,
\begin{align}
\label{eq:key_technical_result}
  \Susp^\infty\holim\nolimits_\Delta \mathfrak{C}(Y)\wequiv
  \holim\nolimits_\Delta \Susp^\infty \mathfrak{C}(Y)
\end{align}
up to weak equivalence, with the right derived limit functor $\holim_\Delta $, when composed with the cosimplicial cobar construction $\mathfrak{C}$ and evaluated on $1$-connected $\Kt$-coalgebra spectra.

\subsection{Organization of the paper}

In Section \ref{sec:outline_of_the_argument} we outline the argument of our main result. In Section \ref{sec:proofs} we review the fundamental adjunction and then prove the main result. Sections \ref{sec:simplicial_structures} through \ref{sec:derived_fundamental_adjunction}  are important background sections; for the convenience of the reader we briefly recall some preliminaries on simplicial structures, the homotopy theory of $\Kt$-coalgebras, and the derived fundamental adjunction for stabilization. For the experts familiar with Arone-Ching \cite{Arone_Ching_classification}, it will suffice to read Sections \ref{sec:outline_of_the_argument} and \ref{sec:proofs} for a complete proof of the main result.

\subsection*{Acknowledgments}

The authors would like to thank Michael Ching for helpful suggestions and useful remarks throughout this project. The second author would like to thank Bjorn Dundas, Bill Dwyer, Haynes Miller, and Crichton Ogle for useful suggestions and remarks and Lee Cohn, Mike Hopkins, Tyler Lawson, and Nath Rao for helpful comments. The second author is grateful to Haynes Miller for a stimulating and enjoyable visit to the Massachusetts Institute of Technology in early spring 2015, and to Bjorn Dundas for a stimulating and enjoyable visit to the University of Bergen in late spring 2015, and for their invitations which made this possible. The first author was supported in part by National Science Foundation grant DMS-1510640.

\section{Outline of the argument}
\label{sec:outline_of_the_argument}

In this section we will outline the proof of our main result. Since the derived unit map is tautologically the $\Loopt^\infty\Susp^\infty$-completion map $X'\rarrow {X'}^\wedge_{\Loopt^\infty\Susp^\infty}$, which is proved to be a weak equivalence on $1$-connected spaces in Carlsson \cite{Carlsson_equivariant} (see also Arone-Kankaanrinta \cite{Arone_Kankaanrinta}), proving the main result reduces to verifying that the derived counit map is a weak equivalence.

The following are proved in Section \ref{sec:cubical_diagrams_homotopical_analysis}.

\begin{thm}
\label{thm:estimating_connectivity_of_maps_in_tower_C_of_Y}
If $Y$ is a $1$-connected $\Kt$-coalgebra spectrum and $n\geq 1$, then the natural map
\begin{align}
\label{eq:tower_map_from_n_th_stage_to_next_lower_stage}
  \holim_{\Delta^{\leq n}}\mathfrak{C}(Y)&\longrightarrow
  \holim_{\Delta^{\leq n-1}}\mathfrak{C}(Y)
\end{align}
is an $(n+2)$-connected map between $1$-connected objects.
\end{thm}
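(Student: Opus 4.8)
The plan is to analyze the homotopy fiber of the tower map \eqref{eq:tower_map_from_n_th_stage_to_next_lower_stage} and show it is $(n+1)$-connected; with the usual convention (a map is $c$-connected exactly when its homotopy fiber is $(c-1)$-connected), an $(n+1)$-connected fiber is precisely what makes the map $(n+2)$-connected. Recall that $\mathfrak{C}(Y)$ is the cosimplicial cobar construction with $[k]\mapsto\Loopt^\infty\Kt^k Y$, and that $\Loopt^\infty=\Loop^\infty F$ already builds in fibrant replacement, so we may work with a Reedy fibrant model. I would then invoke the standard fibration sequence for the Bousfield--Kan homotopy limit tower of a pointed cosimplicial space:
\begin{align*}
  \hofib\Bigl(\holim_{\Delta^{\leq n}}\mathfrak{C}(Y)\rarrow\holim_{\Delta^{\leq n-1}}\mathfrak{C}(Y)\Bigr)
  \wequiv\Loop^n N^n\mathfrak{C}(Y),
\end{align*}
where $N^n\mathfrak{C}(Y)=\hofib\bigl(\mathfrak{C}(Y)^n\rarrow M^{n-1}\mathfrak{C}(Y)\bigr)$ is the $n$-th normalization and $M^{n-1}\mathfrak{C}(Y)$ is the matching object (the limit over the codegeneracies out of $\mathfrak{C}(Y)^n$). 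The theorem thus reduces to two claims: (i) $N^n\mathfrak{C}(Y)$ is $(2n+1)$-connected, and (ii) each stage $\holim_{\Delta^{\leq n}}\mathfrak{C}(Y)$ is $1$-connected.

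The key step is (i), and here I would exploit the rigid structure of the cobar construction: every codegeneracy of $\mathfrak{C}(Y)$ is induced by the counit $\Susp^\infty\Loopt^\infty\rarrow\id$, while every coface is induced by the spaces-level stabilization map (the adjunction unit) $\eta\colon\id\rarrow\Loopt^\infty\Susp^\infty$ — through the leftmost coaction $d^0$ and the interior comultiplication insertions — together with the $\Kt$-coalgebra structure map on $Y$ in the top coface. Using the splittings coming from the counit--unit relations, I would identify $N^n\mathfrak{C}(Y)$, up to a loop shift, with the total homotopy fiber of the coface $n$-cube assembled from $n$ independent insertions of the stabilization map; this is exactly the $n$-fold iterated stabilization cube governed by the higher stabilization theorem of this paper. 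Applying that theorem gives uniform cartesian-ness of the cube with the estimate needed to conclude $N^n\mathfrak{C}(Y)$ is $(2n+1)$-connected. As a check on the numerology, when $n=1$ the codegeneracy $s^0$ splits via $d^0=\eta$, so $\Loop N^1\mathfrak{C}(Y)\wequiv\hofib(\eta)$; since $\Loopt^\infty Y$ is $1$-connected, Freudenthal makes $\hofib(\eta)$ $2$-connected, whence $N^1\mathfrak{C}(Y)$ is $3$-connected $=2\cdot 1+1$.

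Granting (i), claim (ii) follows by induction up the tower. The base stage $\holim_{\Delta^{\leq 0}}\mathfrak{C}(Y)=\Loopt^\infty Y$ is $1$-connected because $Y$ is a $1$-connected spectrum and $\Loopt^\infty=\Loop^\infty F$ preserves $1$-connectivity. At each stage the fiber $\Loop^n N^n\mathfrak{C}(Y)$ is $(n+1)$-connected, hence at least $2$-connected for $n\geq 1$; a space fibering over a $1$-connected base with $2$-connected fiber is again $1$-connected by the long exact sequence, so every stage is $1$-connected. Combining (i) and (ii), the fiber $\Loop^n N^n\mathfrak{C}(Y)$ is $\bigl((2n+1)-n\bigr)=(n+1)$-connected, so \eqref{eq:tower_map_from_n_th_stage_to_next_lower_stage} is an $(n+2)$-connected map between $1$-connected objects.

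The hard part is (i), and really it is the input on which (i) rests. A naive pointwise application of Freudenthal to the individual coface maps only yields connectivity growing like $n$, not the factor-of-$2$ slope $2n$ that is needed to survive the subsequent $n$-fold looping; securing the uniform cartesian-ness of the full $n$-cube at once — the stabilization analog of Dundas' higher Hurewicz theorem — is where the genuine work lies. The remaining difficulties in the present argument are the necessary but essentially bookkeeping points: (a) matching the matching-fiber normalization $N^n$ with the coface cube up to the correct loop shift, and (b) accounting for the top coface built from the coalgebra structure map of $Y$ so that the higher stabilization estimate still applies to the cube.
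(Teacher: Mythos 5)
Your proposal is correct and follows essentially the same route as the paper: the paper also identifies the layer of the $\holim$ tower with $\Loop^n$ of the (total homotopy fiber of the) codegeneracy $n$-cube of $\mathfrak{C}(Y)$, proves that this total fiber is $(2n+1)$-connected by trading the codegeneracy cube for a coface cube built from iterated stabilization maps, and feeds that cube into the higher stabilization theorem with $k=1$. The two ``bookkeeping points'' you flag are exactly the paper's Proposition \ref{prop:comparing_faces_of_coface_cube_with_codegeneracy_cube} (uniformity of faces, which holds for any choice of $t\in T$ and hence lets one pick a face avoiding the coface built from the coalgebra structure map), so nothing further is missing.
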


\begin{thm}
\label{thm:connectivities_for_map_into_n_th_stage}
If $Y$ is a $1$-connected $\Kt$-coalgebra spectrum and $n\geq 0$, then the natural maps
\begin{align}
\label{eq:canonical_map_needed_to_discuss}
  \holim\nolimits_\Delta \mathfrak{C}(Y)&\longrightarrow
  \holim\nolimits_{\Delta^{\leq n}}\mathfrak{C}(Y)\\
\label{eq:the_other_canonical_map_needed}
  \Susp^\infty\holim\nolimits_\Delta \mathfrak{C}(Y)&\longrightarrow
  \Susp^\infty\holim\nolimits_{\Delta^{\leq n}}\mathfrak{C}(Y)
\end{align}
are $(n+3)$-connected maps between $1$-connected objects.
\end{thm}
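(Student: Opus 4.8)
The plan is to read both maps directly off the tower of truncated homotopy limits, using the previous theorem to control the layers. Write $P_m=\holim_{\Delta^{\leq m}}\mathfrak{C}(Y)$, so that $\holim_\Delta\mathfrak{C}(Y)=\holim_m P_m$ is the homotopy limit of the tower
\[
  \cdots\rightarrow P_{m}\rightarrow P_{m-1}\rightarrow\cdots\rightarrow P_0,
\]
and the map \eqref{eq:canonical_map_needed_to_discuss} is the canonical projection $\holim_m P_m\to P_n$. Theorem \ref{thm:estimating_connectivity_of_maps_in_tower_C_of_Y} tells us each layer map $P_m\to P_{m-1}$ is $(m+2)$-connected between $1$-connected objects, so its homotopy fiber $\hofib(P_m\to P_{m-1})$ is $(m+1)$-connected.

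First I would analyze the homotopy fiber of $\holim_m P_m\to P_n$. Since homotopy fibers over a fixed base commute with the homotopy limit over $m$, this fiber is the homotopy limit of the tower $\{\hofib(P_m\to P_n)\}_{m\geq n}$ of relative fibers. Each such relative fiber is built from the layers $\hofib(P_m\to P_{m-1})$ for $m>n$; the least connected of these is $\hofib(P_{n+1}\to P_n)$, which is $(n+2)$-connected, so each $\hofib(P_m\to P_n)$ is $(n+2)$-connected. Because the layer connectivities increase without bound, the associated towers of homotopy groups are pro-constant (Mittag--Leffler), so the $\lim^1$ terms in the Milnor sequence vanish in the relevant range and the homotopy limit remains $(n+2)$-connected. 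Hence $\hofib(\holim_m P_m\to P_n)$ is $(n+2)$-connected, i.e., \eqref{eq:canonical_map_needed_to_discuss} is $(n+3)$-connected; taking $n=0$ and using that $P_0$ is $1$-connected shows $\holim_\Delta\mathfrak{C}(Y)$ is $1$-connected as well, so \eqref{eq:canonical_map_needed_to_discuss} is a map between $1$-connected objects.

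For the second map \eqref{eq:the_other_canonical_map_needed} I would simply apply the left-derived stabilization functor $\Susp^\infty$ to the $(n+3)$-connected map \eqref{eq:canonical_map_needed_to_discuss} between $1$-connected spaces. Since $\Susp^\infty$ does not lower connectivity---the suspension spectrum of a $1$-connected space is a $1$-connected spectrum, and $\Susp^\infty$ carries a $k$-connected map of $1$-connected spaces to an (at least) $k$-connected map of spectra---the resulting map is again $(n+3)$-connected between $1$-connected objects. Note that no commutation of $\Susp^\infty$ with $\holim_\Delta$ is needed here: \eqref{eq:the_other_canonical_map_needed} is literally $\Susp^\infty$ applied to \eqref{eq:canonical_map_needed_to_discuss}, so the estimate transports directly.

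The main obstacle I anticipate is the precise bookkeeping in the fiber analysis: one must confirm that the homotopy limit of the tower of relative fibers is exactly $(n+2)$-connected rather than one degree less, which requires checking that the $\lim^1$ contributions are annihilated by the increasing connectivity of the layers coming from Theorem \ref{thm:estimating_connectivity_of_maps_in_tower_C_of_Y}. By comparison, the claim that $\Susp^\infty$ preserves connectivity on $1$-connected objects is routine, resting on the Freudenthal suspension theorem.
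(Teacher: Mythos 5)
Your proposal is correct and follows essentially the same route as the paper: the paper likewise deduces from Theorem \ref{thm:estimating_connectivity_of_maps_in_tower_C_of_Y} that all maps in the tower $\{\holim_{\Delta^{\leq m}}\mathfrak{C}(Y)\}_m$ above level $n$ are at least $(n+3)$-connected, concludes that the projection \eqref{eq:canonical_map_needed_to_discuss} is $(n+3)$-connected, and obtains \eqref{eq:the_other_canonical_map_needed} by applying $\Susp^\infty$. You have merely made explicit the $\lim^1$/Milnor-sequence bookkeeping and the connectivity preservation of $\Susp^\infty$ that the paper leaves implicit.
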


\begin{proof}
Consider the first part. By Theorem \ref{thm:estimating_connectivity_of_maps_in_tower_C_of_Y} each of the maps in the holim tower $\{\holim_{\Delta^{\leq n}}\mathfrak{C}(Y)\}_n$, above level $n$, is at least $(n+3)$-connected. It follows that the map \eqref{eq:canonical_map_needed_to_discuss} is $(n+3)$-connected. The second part follows from the first part.
\end{proof}

The following is a key ingredient for proving our main result. It provides estimates sufficient for verifying that stabilization commutes past the desired homotopy limits.

\begin{thm}
\label{thm:connectivities_for_map_that_commutes_sigma_into_inside_of_holim}
If $Y$ is a $1$-connected $\Kt$-coalgebra spectrum and $n\geq 1$, then the natural map
\begin{align}
\label{eq:commuting_sigma_past_holim_delta}
  \Susp^\infty\holim\nolimits_{\Delta^{\leq n}} \mathfrak{C}(Y)\longrightarrow
  \holim\nolimits_{\Delta^{\leq n}} \Susp^\infty\,\mathfrak{C}(Y),
\end{align}
is $(n+5)$-connected; the map is a weak equivalence for $n=0$.
\end{thm}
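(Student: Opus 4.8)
The plan is to prove the estimate by reducing the comparison map \eqref{eq:commuting_sigma_past_holim_delta} to a single total-fiber-to-total-cofiber comparison of a cube of \emph{spaces}, to which the cubical Blakers--Massey theorem applies, with the connectivity inputs supplied by Theorem \ref{thm:estimating_connectivity_of_maps_in_tower_C_of_Y}. First I would dispose of the case $n=0$: here $\holim_{\Delta^{\leq 0}}\mathfrak{C}(Y)$ is simply the $0$-th cosimplicial term $\mathfrak{C}(Y)^0$, both sides of \eqref{eq:commuting_sigma_past_holim_delta} are $\Susp^\infty\mathfrak{C}(Y)^0$, and the natural map is the identity, hence a weak equivalence.

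For $n\geq 1$, I would express the partial totalization as the total homotopy fiber of a cube. Recall that $\holim_{\Delta^{\leq n}}\mathfrak{C}(Y)$ can be built, via the coface maps, as the total homotopy fiber $\mathrm{fib}(\mathcal{X}_n)$ of an $n$-cube $\mathcal{X}_n$ of pointed spaces whose vertices are (finite products of) the cosimplicial terms $\mathfrak{C}(Y)^k$; the layers of the resulting tower are the iterated loop spaces $\Omega^k N^k\mathfrak{C}(Y)$ of the cosimplicial conormalizations. The key reduction is then the following. Since $\Susp^\infty=S\tensor-$ is a left adjoint it preserves homotopy colimits, hence total cofibers of cubes; and since symmetric spectra form a stable homotopy theory, the total homotopy fiber and total homotopy cofiber of any cube of spectra are canonically weakly equivalent. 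Applying these two facts to the cube $\Susp^\infty\mathcal{X}_n$ gives
\begin{align*}
  \holim\nolimits_{\Delta^{\leq n}}\Susp^\infty\mathfrak{C}(Y)
  = \mathrm{fib}(\Susp^\infty\mathcal{X}_n)
  \wequiv \mathrm{cofib}(\Susp^\infty\mathcal{X}_n)
  \wequiv \Susp^\infty\mathrm{cofib}(\mathcal{X}_n),
\end{align*}
while the left-hand side of \eqref{eq:commuting_sigma_past_holim_delta} is $\Susp^\infty\mathrm{fib}(\mathcal{X}_n)$. Under these identifications the map \eqref{eq:commuting_sigma_past_holim_delta} is identified with $\Susp^\infty$ applied to the natural comparison map $\mathrm{fib}(\mathcal{X}_n)\rarrow\mathrm{cofib}(\mathcal{X}_n)$ of pointed spaces. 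Since $\Susp^\infty$ preserves the connectivity of maps (its cofiber commutes with stabilization), it therefore suffices to estimate the connectivity of this spaces-level total-fiber-to-total-cofiber comparison.

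This last estimate is exactly the content of the cubical (higher) Blakers--Massey theorem, applied to $\mathcal{X}_n$. The required connectivity inputs are provided by Theorem \ref{thm:estimating_connectivity_of_maps_in_tower_C_of_Y}: the $(n+2)$-connectivity of each tower map forces the conormalizations $N^k\mathfrak{C}(Y)$, and hence the relevant faces of $\mathcal{X}_n$, to be increasingly highly connected---roughly $(2k+1)$-connected---as $k$ grows. Feeding these connectivities, together with the $1$-connectedness supplied by working with a $1$-connected coalgebra spectrum $Y$, into the Blakers--Massey bound produces a lower bound on the connectivity of $\mathrm{fib}(\mathcal{X}_n)\rarrow\mathrm{cofib}(\mathcal{X}_n)$ that grows linearly in $n$ and is at least $n+5$; this is precisely where the higher stabilization estimates of the paper do their work.

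The main obstacle I anticipate is the numerical bookkeeping in this final step: pinning down the precise cube $\mathcal{X}_n$ realizing $\holim_{\Delta^{\leq n}}$ and its layers, tracking the exact connectivities of its faces as dictated by Theorem \ref{thm:estimating_connectivity_of_maps_in_tower_C_of_Y}, and summing the direction-wise contributions in the cubical Blakers--Massey estimate so that the constants combine to yield exactly $n+5$ rather than a weaker bound. Care is also needed to verify that all objects in sight remain $1$-connected, so that the Freudenthal-type comparisons implicit in the assertion that $\Susp^\infty$ preserves connectivity of maps, and in the fiber/cofiber identification, are valid throughout the claimed range.
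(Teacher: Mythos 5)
Your broad instinct---model the truncated homotopy limit cubically and feed stabilization-type connectivity estimates into a (dual) Blakers--Massey argument---is the right one, and your $n=0$ case is fine. But the load-bearing step of your reduction is false: for an $m$-cube of spectra the total homotopy fiber and total homotopy cofiber are \emph{not} canonically equivalent; they differ by an $m$-fold shift, $\mathrm{tocofib}\wequiv\Sigma^m\,\mathrm{tofib}$. That shift is precisely where the answer $n+5$ (rather than something like $2n+5$) comes from, so suppressing it does not merely lose a constant---it destroys exactly the bookkeeping you flagged as the main worry. Two further points in the same step are not well formed: $\holim_{\Delta^{\leq n}}\mathfrak{C}(Y)$ is not naturally the total homotopy fiber of an $n$-cube of products of cosimplicial terms (it is the homotopy limit of a punctured $(n+1)$-cube, equivalently the initial vertex of an $\infty$-cartesian $(n+1)$-cube), and there is no natural ``total fiber to total cofiber'' comparison map of a cube of \emph{spaces} for Blakers--Massey to estimate; the higher Blakers--Massey theorems compare the cartesian-ness and cocartesian-ness of a single cube rather than a fiber against a cofiber.

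The correct organization, which is the paper's, is: using left cofinality of $\capP_0([n])\rarrow\Delta^{\leq n}$, form the $\infty$-cartesian $(n+1)$-cube $\widetilde{\mathfrak{C}(Y)}$ whose initial vertex is $\holim_{\Delta^{\leq n}}\mathfrak{C}(Y)$; the connectivity of the map \eqref{eq:commuting_sigma_past_holim_delta} is then exactly the cartesian-ness of the $(n+1)$-cube $\Susp^\infty\widetilde{\mathfrak{C}(Y)}$. One shows the spaces-level cube $\widetilde{\mathfrak{C}(Y)}$ is $(2n+5)$-cocartesian by the dual higher Blakers--Massey theorem, with the required cartesian-ness of every face supplied by the higher stabilization theorem together with the uniformity of faces---not by Theorem \ref{thm:estimating_connectivity_of_maps_in_tower_C_of_Y}, which in the paper is downstream of these same estimates (your route through it is not circular, but it is indirect and you would still need the uniformity statement to control all faces, not just the top codegeneracy cube). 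Since $\Susp^\infty$ preserves cocartesian-ness, $\Susp^\infty\widetilde{\mathfrak{C}(Y)}$ is $(2n+5)$-cocartesian, and in $\Mod_S$ a $(2n+5)$-cocartesian $(n+1)$-cube is $(2n+5)-(n+1)+1=(n+5)$-cartesian. This last conversion is the honest version of your ``fiber equals cofiber'' step, and it is exactly where the factor of two is spent.
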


The following resolves the original form of Tyler Lawson's question/conjecture.

\begin{thm}
\label{thm:sigmaC_commutes_with_desired_holim}
If $Y$ is a $1$-connected $\Kt$-coalgebra spectrum, then the natural maps
\begin{align}
\label{eq:sigmaC_commutes_with_desired_holim}
  \Susp^\infty\holim\nolimits_\Delta \mathfrak{C}(Y)&\xrightarrow{\wequiv}
  \holim\nolimits_\Delta \Susp^\infty\,\mathfrak{C}(Y)\xrightarrow{\wequiv}Y
\end{align}
are weak equivalences.
\end{thm}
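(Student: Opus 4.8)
The plan is to prove the two maps in \eqref{eq:sigmaC_commutes_with_desired_holim} are weak equivalences separately. The second map is, up to the comonad counit, the coaugmentation of a cobar resolution and is handled by a formal extra-codegeneracy argument; the first map is the genuinely hard ``stabilization commutes with $\holim_\Delta$'' assertion \eqref{eq:key_technical_result}, which I would deduce from the finite-stage connectivity estimates by a limiting argument. I treat the (easier) second map first.

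Since $\mathfrak{C}^n(Y)=\Loopt^\infty\Kt^n(Y)$, we have $\Susp^\infty\mathfrak{C}^n(Y)=\Kt^{n+1}(Y)$, so that $\Susp^\infty\mathfrak{C}(Y)$ is precisely the cosimplicial cobar resolution $\Kt^{\bullet+1}(Y)$ of the $\Kt$-coalgebra $Y$ associated to the homotopical comonad $\Kt$. The coaugmentation $Y\to\Kt^{\bullet+1}(Y)$ given by the coalgebra structure map, together with the comonad counit $\Kt\to\mathrm{id}$, furnishes extra codegeneracies; hence the coaugmented cosimplicial object is a homotopy limit, i.e., the natural map $\holim_\Delta\Susp^\infty\mathfrak{C}(Y)\to Y$ (induced by the level-$0$ projection followed by the counit $\Kt(Y)=\Susp^\infty\Loopt^\infty Y\to Y$) is a weak equivalence. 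This is the standard comonadic cobar/descent argument and requires no connectivity hypotheses.

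For the first map $\beta\colon\Susp^\infty\holim_\Delta\mathfrak{C}(Y)\to\holim_\Delta\Susp^\infty\mathfrak{C}(Y)$, I would work with the commuting square comparing the full $\holim_\Delta$ with the finite stages $\holim_{\Delta^{\leq n}}$. Write $\alpha_n$ for $\Susp^\infty\holim_\Delta\mathfrak{C}(Y)\to\Susp^\infty\holim_{\Delta^{\leq n}}\mathfrak{C}(Y)$, write $\gamma_n$ for the finite-stage comparison $\Susp^\infty\holim_{\Delta^{\leq n}}\mathfrak{C}(Y)\to\holim_{\Delta^{\leq n}}\Susp^\infty\mathfrak{C}(Y)$, and write $\delta_n$ for $\holim_\Delta\Susp^\infty\mathfrak{C}(Y)\to\holim_{\Delta^{\leq n}}\Susp^\infty\mathfrak{C}(Y)$; naturality gives $\gamma_n\circ\alpha_n=\delta_n\circ\beta$. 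By Theorem \ref{thm:connectivities_for_map_into_n_th_stage} the map $\alpha_n$ is $(n+3)$-connected and by Theorem \ref{thm:connectivities_for_map_that_commutes_sigma_into_inside_of_holim} the map $\gamma_n$ is $(n+5)$-connected, so the composite $\delta_n\circ\beta$ is $(n+3)$-connected. To exploit this I also need $\delta_n$ to be increasingly connected; I would obtain this by showing that the tower $\{\holim_{\Delta^{\leq n}}\Susp^\infty\mathfrak{C}(Y)\}_n$ has increasingly connected structure maps and then applying the same tower estimate used to prove Theorem \ref{thm:connectivities_for_map_into_n_th_stage}. The connectivity of the structure map $\holim_{\Delta^{\leq n}}\Susp^\infty\mathfrak{C}(Y)\to\holim_{\Delta^{\leq n-1}}\Susp^\infty\mathfrak{C}(Y)$ follows from a total-homotopy-fiber analysis of the square whose vertical maps are $\gamma_n,\gamma_{n-1}$ and whose top map is $\Susp^\infty$ applied to the $(n+2)$-connected tower map of Theorem \ref{thm:estimating_connectivity_of_maps_in_tower_C_of_Y} (using that $\Susp^\infty$ preserves connectivity of maps between $1$-connected spaces); chasing connectivities through the two iterated-fiber sequences shows this structure map is again roughly $(n+2)$-connected, whence $\delta_n$ is increasingly connected.

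With $\alpha_n$, $\gamma_n$, and $\delta_n$ all increasingly connected, I conclude as follows: for each fixed degree $k$, choosing $n$ large makes $\gamma_n\circ\alpha_n=\delta_n\circ\beta$ an isomorphism on $\pi_k$ and makes $\delta_n$ an isomorphism on $\pi_k$, which forces $\beta$ to be an isomorphism on $\pi_k$; letting $n\to\infty$ shows $\beta$ is a weak equivalence. The main obstacle is exactly this first map: the finite-stage Theorem \ref{thm:connectivities_for_map_that_commutes_sigma_into_inside_of_holim} only supplies $(n+5)$-connectivity, so the infinite-stage statement cannot be read off directly and instead requires simultaneously controlling the two ``pass-to-the-limit'' maps $\alpha_n$ and $\delta_n$ on the two sides. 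The delicate point is the total-fiber bookkeeping establishing that $\delta_n$ grows in connectivity, since the $\holim_\Delta\Susp^\infty\mathfrak{C}(Y)$ side is not directly governed by the earlier cubical estimates and must be accessed indirectly through the commuting squares.
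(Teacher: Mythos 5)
Your proposal is correct and follows essentially the same route as the paper: the right-hand map is handled by the extra-codegeneracy/cofinality argument for the coaugmented cobar construction (the paper makes this precise via the identification $\Susp^\infty\,\mathfrak{C}(Y)\wequiv\Cobar(F\K,F\K,FY)$, so that the extra codegeneracy $s^{-1}$ is honestly defined even though the counit of $\Kt$ only lands in $F$), and the left-hand map is deduced from Theorems \ref{thm:connectivities_for_map_into_n_th_stage} and \ref{thm:connectivities_for_map_that_commutes_sigma_into_inside_of_holim} by letting $n\to\infty$. Your explicit bookkeeping for the connectivity of the restriction maps $\delta_n$ on the $\holim_\Delta\Susp^\infty\,\mathfrak{C}(Y)$ side supplies detail the paper leaves implicit (it also follows at once from the Milnor $\lim^1$ sequence, since the compatible maps $\gamma_n\circ\alpha_n$ into the tower have connectivity tending to infinity).
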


\begin{proof}
For the case of the left-hand map, it is enough to verify that the connectivities of the natural maps \eqref{eq:the_other_canonical_map_needed} and \eqref{eq:commuting_sigma_past_holim_delta}
 are strictly increasing with $n$, and Theorems \ref{thm:connectivities_for_map_into_n_th_stage} and \ref{thm:connectivities_for_map_that_commutes_sigma_into_inside_of_holim} complete the proof. Consider the right-hand map. Since $\Susp^\infty\, \mathfrak{C}(Y)\wequiv F\Susp^\infty\, \mathfrak{C}(Y)$ and the latter is isomorphic to the cosimplicial cobar construction $\Cobar(F\K,F\K,FY)$, which has extra codegeneracy maps $s^{-1}$ (Dwyer-Miller-Neisendorfer \cite[6.2]{Dwyer_Miller_Neisendorfer}), it follows from the cofinality argument in Dror-Dwyer \cite[3.16]{Dror_Dwyer_long_homology} that the right-hand map in \eqref{eq:sigmaC_commutes_with_desired_holim} is a weak equivalence.
\end{proof}

\begin{proof}[Proof of Theorem \ref{MainTheorem}]
We want to verify that the natural map
$
  \Susp^\infty\holim\nolimits_\Delta \mathfrak{C}(Y)\rarrow Y
$
is a weak equivalence; since this is the composite \eqref{eq:sigmaC_commutes_with_desired_holim}, Theorem \ref{thm:sigmaC_commutes_with_desired_holim} completes the proof.
\end{proof}

\section{Homotopical analysis}
\label{sec:proofs}

In this section we recall the stabilization functor, together with related constructions, and prove Theorems \ref{thm:estimating_connectivity_of_maps_in_tower_C_of_Y} and \ref{thm:connectivities_for_map_that_commutes_sigma_into_inside_of_holim}. 

\subsection{Stabilization and the fundamental adjunction}

The fundamental adjunction naturally arises by observing that $\Susp^\infty$ is equipped with a coaction over the comonad $\K$ associated to the $(\Susp^\infty, \Omega^\infty)$ adjunction; this observation, which remains true for any adjunction provided that the indicated limits below exist, forms the basis of the homotopical descent ideas appearing in Hess \cite{Hess} and subsequently in Francis-Gaitsgory \cite{Francis_Gaitsgory}.

Consider any pointed space $X$ and $S$-module $Y$, and recall that the suspension spectrum $\Sigma^\infty(X)=S\tensor X$ and $0$-th space $\Loop^\infty(Y)=\Ev_0(Y)=Y_0$ functors fit into an adjunction
\begin{align}
\label{eq:suspension_adjunction}
\xymatrix{
  \Space_*\ar@<0.5ex>[r]^-{\Susp^\infty} &
  \Mod_S\ar@<0.5ex>[l]^-{\Loop^\infty}
}
\end{align}
with left adjoint on top. Associated to the adjunction in \eqref{eq:suspension_adjunction} is the monad $\Loop^\infty\Susp^\infty$ on pointed spaces $\Space_*$ and the comonad $\K:=\Susp^\infty\Loop^\infty$ on $S$-modules $\Mod_S$ of the form
\begin{align}
\label{eq:suspension_spectrum_functor_natural_transformations}
  \id\xrightarrow{\eta} \Loop^\infty\Susp^\infty&\quad\text{(unit)},\quad\quad
  &\id\xleftarrow{\varepsilon}\K& \quad\text{(counit)}, \\
  \notag
  \Loop^\infty\Susp^\infty\Loop^\infty\Susp^\infty\rarrow \Loop^\infty\Susp^\infty&\quad\text{(multiplication)},\quad\quad
  &\K\K\xleftarrow{m}\K& \quad\text{(comultiplication)}.
\end{align}
and it follows formally that there is a factorization of adjunctions of the form
\begin{align}
\label{eq:factorization_of_adjunctions_suspension}
\xymatrix{
  \Space_*\ar@<0.5ex>[r]^-{\Susp^\infty} &
  \coAlgK\ar@<0.5ex>[r]\ar@<0.5ex>[l]^-{\lim_\Delta C} &
  \Mod_S\ar@<0.5ex>[l]^-{\K} 
}
\end{align}
with left adjoints on top and $\coAlgK\rarrow\Mod_S$ the forgetful functor. In particular, the suspension spectrum $\Susp^\infty X$ is naturally equipped with a $\K$-coalgebra structure. To understand the comparison in \eqref{eq:factorization_of_adjunctions_suspension} between $\Space_*$ and $\coAlgK$ it is enough to observe that $\lim_\Delta C(Y)$ is naturally isomorphic to an equalizer of the form
\begin{align*}
  \lim_\Delta C(Y)\Iso
  \lim\Bigl(
  \xymatrix{
    \Loop^\infty Y\ar@<0.5ex>[r]^-{d^0}\ar@<-0.5ex>[r]_-{d^1} &
    \Loop^\infty\K Y
  }
  \Bigr)
\end{align*}
where $d^0=m\id$, $d^1=\id m$, $\function{m}{\Loop^\infty}{\Loop^\infty\K=\Loop^\infty\Susp^\infty\Loop^\infty}$ denotes the $\K$-coaction map on $\Loop^\infty$ (defined by $m:=\eta\id$), and $\function{m}{Y}{\K Y}$ denotes the $\K$-coaction map on $Y$; see, for instance, \cite{Blomquist_Harper_integral_chains}.

\begin{defn}
\label{defn:cobar_construction}
Let $Y$ be a $\K$-coalgebra. The \emph{cosimplicial cobar construction} $C(Y):=\Cobar(\Loop^\infty,\K,Y)$ in $(\Space_*)^{\Delta}$ looks like
\begin{align}
\label{eq:cobar_construction}
\xymatrix{
  C(Y): \quad\quad
  \Loop^\infty Y\ar@<0.5ex>[r]^-{d^0}\ar@<-0.5ex>[r]_-{d^1} &
  \Loop^\infty \K Y
  \ar@<1.0ex>[r]\ar[r]\ar@<-1.0ex>[r] &
  \Loop^\infty\K\K Y
  \cdots
}
\end{align}
(showing only the coface maps) and is defined objectwise by $C(Y)^n:=\Loop^\infty\K^n Y$ with the obvious coface and codegeneracy maps; see, for instance, the face and degeneracy maps in the simplicial bar constructions described in Gugenheim-May \cite[A.1]{Gugenheim_May} or May \cite[Section 7]{May_classifying_spaces}, and dualize. For instance, the indicated coface maps in \eqref{eq:cobar_construction} are defined by $d^0:=m\id$ and $d^1:=\id m$.
\end{defn}

\subsection{Coalgebras over the homotopical comonad $\Kt$}

It will be useful to interpret the cosimplicial $\Loopt^\infty\Susp^\infty$-resolution of $X$ in terms of a cosimplicial cobar construction that naturally arises as a ``fattened'' version of \eqref{eq:cobar_construction}; this leads to the notion of a $\Kt$-coalgebra exploited in Cohn \cite{Cohn}.

\begin{defn}
\label{defn:fibrant_replacement}
Denote by $\function{\eta}{\id}{F}$ and $\function{m}{FF}{F}$ the unit and multiplication maps of the simplicial fibrant replacement monad $F$ on $\Mod_S$ (Blumberg-Riehl \cite[6.1]{Blumberg_Riehl}). It follows that $\Loopt^\infty:=\Loop^\infty F$ and $\Kt:=\K F$ are the derived functors of $\Loop^\infty$ and $\K$, respectively. The comultiplication $\function{m}{\Kt}{\Kt\Kt}$ and counit $\function{\varepsilon}{\Kt}{F}$ maps are defined by the composites
\begin{align}
\label{eq:comultiplication_K_tilde}
  &\K F\xrightarrow{m\id}\K\K F=
  \K\id\K F\xrightarrow{\id\eta\id\id}
  \K F\K F\\
  \label{eq:counit_K_tilde}
  &\K F\xrightarrow{\varepsilon\id}\id F=F
\end{align}
respectively.
\end{defn}

It is shown in Blumberg-Riehl \cite[4.2, 4.4]{Blumberg_Riehl}, and subsequently exploited in Cohn \cite{Cohn}, that the derived functor $\Kt:=\K F$ of the comonad $\K$ is very nearly a comonad itself with the structure maps $\function{m}{\Kt}{\Kt\Kt}$ and $\function{\varepsilon}{\Kt}{F}$ above. For instance, it is proved in \cite{Blumberg_Riehl} that $\Kt$ defines a comonad on the homotopy category of $\Mod_S$, which is a reflection of the the fact that $\Kt$ has the structure of a highly homotopy coherent comonad (see \cite{Blumberg_Riehl}); in particular, $\Kt$ has a strictly coassociative comultiplication $\function{m}{\Kt}{\Kt\Kt}$ and satisfies left and right counit identities up to factors of $F\wequiv\id$. In more detail, the homotopical comonad $\Kt$ makes the following diagrams
\begin{align*}
  \xymatrix{
    \Kt\ar[r]^-{m}\ar[d]_-{m} & 
    \Kt\Kt\ar[d]^-{m\id}\\
    \Kt\Kt\ar[r]_-{\id m} & \Kt\Kt\Kt
  }\quad\quad\quad
  \xymatrix{
    F\Kt\ar[r]^-{\id m}\ar@{=}[d] & F\Kt \Kt\ar[d]^-{(*)}\\
     F\Kt\ar@{=}[r] & F\Kt
  }\quad\quad\quad
  \xymatrix{
    \Kt\ar[r]^-{m}\ar@{=}[d] &
    \Kt\Kt\ar[d]^-{(**)}\\
    \Kt\ar@{=}[r] & \Kt
  }
\end{align*}
commute; here, the map $(*)$ is the composite
$
  F\Kt \Kt\xrightarrow{\id\varepsilon\id}FF\Kt\xrightarrow{m\id}F\Kt
$ and the map $(**)$ is the composite
$
  \K F\Kt\xrightarrow{\id\id\varepsilon}\K FF\xrightarrow{\id m}\K F
$.

\begin{rem}
Associated to the adjunction $(\Susp^\infty,\Loop^\infty)$ is a left $\K$-coaction (or $\K$-coalgebra structure) $\function{m}{\Susp^\infty X}{\K \Susp^\infty X}$ on $\Susp^\infty X$, defined by $m=\id\eta\id$), for any $X\in\Space_*$. This map induces a corresponding left $\Kt$-coaction $\function{m}{\Susp^\infty X}{\Kt\Susp^\infty X}$ that is the composite 
\begin{align*}
  \Susp^\infty X\xrightarrow{m}
  \K\Susp^\infty X=\K\id\Susp^\infty X
  \xrightarrow{}\K F\Susp^\infty X
\end{align*}
\end{rem}

The following notion of a homotopy $\Kt$-coalgebra, exploited in Cohn \cite{Cohn}, captures exactly the left $\Kt$-coaction structure that stabilization $\Susp^\infty X$ of a pointed space $X$ satisfies; this is precisely the structure being encoded by the cosimplicial $\Loopt^\infty\Sigma^\infty$ resolution \eqref{eq:homology_resolution_introduction}. 
 
\begin{defn}
A \emph{homotopy $\Kt$-coalgebra} (or $\Kt$-coalgebra, for short) is a $Y\in\Mod_S$ together with a map $\function{m}{Y}{\Kt Y}$ in $\Mod_S$ such that the following diagrams
\begin{align*}
  \xymatrix{
  Y\ar[r]^-{m}\ar[d]_-{m} & \Kt Y\ar[d]^-{m\id}\\
  \Kt Y\ar[r]_-{\id m} & \Kt\Kt Y
  }\quad\quad\quad
  \xymatrix{
    FY\ar[r]^-{\id m}\ar@{=}[d] & F\Kt Y\ar[d]^-{(*)}\\
     FY\ar@{=}[r] & FY
  }
\end{align*}
commute; here, the map $(*)$ is the composite
$
  F\Kt Y\xrightarrow{\id\varepsilon\id}FFY\xrightarrow{m\id}FY
$.
\end{defn}

\begin{defn}
\label{defn:cobar_construction_fattened}
Let $Y$ be a $\Kt$-coalgebra. The \emph{cosimplicial cobar construction} $\mathfrak{C}(Y):=\Cobar(\Loopt^\infty,\Kt,Y)$ in $(\Space_*)^{\Delta}$ looks like
\begin{align}
\label{eq:cobar_construction_fattened_up}
\xymatrix{
  \mathfrak{C}(Y): \quad\quad
  \Loopt^\infty Y\ar@<0.5ex>[r]^-{d^0}\ar@<-0.5ex>[r]_-{d^1} &
  \Loopt^\infty\Kt Y
  \ar@<1.0ex>[r]\ar[r]\ar@<-1.0ex>[r] &
  \Loopt^\infty\Kt\Kt Y
  \cdots
}
\end{align}
(showing only the coface maps) and is defined objectwise by $\mathfrak{C}(Y)^n:=\Loopt^\infty\Kt^n Y=\Loop^\infty F(\K F)^n Y$ with the obvious coface and codegeneracy maps; for instance, in \eqref{eq:cobar_construction_fattened_up} the indicated coface maps are defined by $d^0:=m\id$ and $d^1:=\id m$ (compare with \eqref{eq:cobar_construction}).
\end{defn}

It is important to note that the cosimplicial resolution \eqref{eq:homology_resolution_introduction} of a pointed space $X$ with respect to stabilization $\Loopt^\infty\Susp^\infty$, built by iterating the spaces-level stabilization map \eqref{eq:hurewicz_map_spaces_level_introduction}, is naturally isomorphic to the map $X\rightarrow\mathfrak{C}(\Susp^\infty X)$; in other words, the homotopical comonad $\Kt$ can be thought of as encoding the spectra-level co-operations on the suspension spectra.

\begin{rem}
It may be helpful to note, in particular, that the cosimplicial cobar construction is encoding the fact that the derived functor $\Loopt^\infty$ has a naturally occurring right $\Kt$-coaction map $\function{m}{\Loopt^\infty}{\Loopt^\infty\Kt}$ that makes the following diagrams
\begin{align*}
  \xymatrix{
    \Loopt^\infty\ar[r]^-{m}\ar[d]_-{m} & 
    \Loopt^\infty\Kt\ar[d]^-{m\id}\\
    \Loopt^\infty\Kt\ar[r]_-{\id m} & \Loopt^\infty\Kt\Kt
  }\quad\quad\quad
  \xymatrix{
    \Loopt^\infty\ar[r]^-{m}\ar@{=}[d] &
    \Loopt^\infty\Kt\ar[d]^-{(**)}\\
    \Loopt^\infty\ar@{=}[r] & \Loopt^\infty
  }
\end{align*}
commute; here, the map $(**)$ is the composite
$
  \Loop^\infty F\Kt\xrightarrow{\id\id\varepsilon}\Loop^\infty FF\xrightarrow{\id m}\Loop^\infty F
$.
\end{rem}

\begin{rem}
It may be helpful to note that the counit map \eqref{eq:counit_K_tilde} is identical to the composite
\begin{align*}
  \K F=\id\K F\xrightarrow{\eta\id\id} F\K F\xrightarrow{\id\varepsilon\id} F\id F=FF\xrightarrow{m} F
\end{align*}
when comparing with \cite{Blumberg_Riehl}.
\end{rem}

\subsection{Higher stabilization}

The purpose of this section is to prove Theorem \ref{thm:higher_stabilization}; see Ching-Harper \cite{Ching_Harper} and Goodwillie \cite{Goodwillie_calculus_2} for notations and definitions associated to cubical diagrams.

\begin{defn}
Let $\function{f}{\NN}{\NN}$ be a function and $W$ a finite set. A $W$-cube $\capX$ is \emph{$f$-cartesian} (resp. \emph{$f$-cocartesian}) if each $d$-subcube of $\capX$ is $f(d)$-cartesian (resp. $f(d)$-cocartesian); here, $\NN$ denotes the non-negative integers.
\end{defn}

The following proposition, proved in \cite{Blomquist_Harper_iterated_suspension}, will be helpful in organizing the proof of the higher stabilization theorem below; compare with Dundas-Goodwillie-McCarthy \cite[A.8.3]{Dundas_Goodwillie_McCarthy}.

\begin{prop}[Uniformity correspondence]
\label{prop:uniformity_correspondence}
Let $k\geq 1$ and $W$ a finite set. A $W$-cube of pointed spaces is $(k(\id+1)+1)$-cartesian if and only if it is $((k+1)(\id+1)-1)$-cocartesian.
\end{prop}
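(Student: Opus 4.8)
The plan is to deduce the statement directly from the generalized Blakers--Massey theorem together with its dual (Goodwillie \cite{Goodwillie_calculus_2}; see Ching--Harper \cite{Ching_Harper} for the cubical formalism we use here), without any induction: the point is that each can be applied to every subcube of $\capX$ simultaneously. Write $f(d):=k(d+1)+1$ and $g(d):=(k+1)(d+1)-1$, so that ``$f$-cartesian'' means every $d$-subcube is $f(d)$-cartesian and ``$g$-cocartesian'' means every $d$-subcube is $g(d)$-cocartesian. Recall the two estimates in the form we need: if $\capX'$ is a $d$-cube whose faces in the directions indexed by $T$ are $\kappa_T$-cocartesian, then $\capX'$ is $\bigl(1-d+\min_\lambda\sum_{T\in\lambda}\kappa_T\bigr)$-cartesian, the minimum ranging over partitions $\lambda$ of the $d$ directions into nonempty blocks; dually, if those faces are $\kappa_T$-cartesian, then $\capX'$ is $\bigl(d-1+\min_\lambda\sum_{T\in\lambda}\kappa_T\bigr)$-cocartesian. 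These specialize correctly: on $1$-subcubes both $f(1)$-cartesian and $g(1)$-cocartesian reduce to the edge being $(2k+1)$-connected, and on $0$-subcubes to the space being $k$-connected.

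For the implication $g$-cocartesian $\Rightarrow$ $f$-cartesian, fix any $d$-subcube $\capX'$ of $\capX$. Every face of $\capX'$ is again a subcube of $\capX$, hence $g$-cocartesian, so feeding $\kappa_T=g(|T|)$ into the Blakers--Massey estimate and using
\[
  \sum_{T\in\lambda}g(|T|)=\sum_{T\in\lambda}\bigl((k+1)(|T|+1)-1\bigr)=(k+1)d+k\,|\lambda|
\]
shows that the minimum over partitions is attained at the one-block partition $|\lambda|=1$ (here the hypothesis $k\geq 1$ is exactly what makes the coefficient $k$ of $|\lambda|$ positive). The resulting cartesian estimate is $1-d+\bigl((k+1)d+k\bigr)=k(d+1)+1=f(d)$, so $\capX'$ is $f(d)$-cartesian; as $\capX'$ was arbitrary, $\capX$ is $f$-cartesian. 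The reverse implication is identical with the roles exchanged: feeding $\kappa_T=f(|T|)$ into the dual Blakers--Massey estimate gives $\sum_{T\in\lambda}f(|T|)=kd+(k+1)|\lambda|$, again minimized at $|\lambda|=1$, and $d-1+\bigl(kd+(k+1)\bigr)=(k+1)(d+1)-1=g(d)$.

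The substantive input is entirely imported: the generalized Blakers--Massey theorem and, crucially, its dual, the latter being a genuine theorem about cubes of \emph{spaces} (which is why the proposition is stated for pointed spaces). Granting those, the only work left is the arithmetic above, and the one real thing to check is that the partition minimum is always realized by the single-block partition; this is immediate from the linear expressions $(k+1)d+k|\lambda|$ and $kd+(k+1)|\lambda|$ once $k\geq 1$. Thus I expect no serious obstacle beyond correctly recording the precise partition-indexed forms of the two Blakers--Massey estimates and confirming that their normalizing constants $1-d$ and $d-1$ combine with the single-block values $g(d)$ and $f(d)$ to reproduce $f(d)$ and $g(d)$ on the nose; this exact matching is the ``uniformity'' packaged by the two linear functions and is the content compared with Dundas--Goodwillie--McCarthy \cite[A.8.3]{Dundas_Goodwillie_McCarthy}.
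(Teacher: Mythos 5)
Your argument is correct and is essentially the proof the paper intends: the paper defers this proposition to \cite{Blomquist_Harper_iterated_suspension}, where it is likewise deduced by feeding the linear estimates $g(|T|)=(k+1)(|T|+1)-1$ (resp.\ $f(|T|)=k(|T|+1)+1$) on all faces of each subcube into the higher Blakers--Massey theorem \cite[2.5]{Goodwillie_calculus_2} and its dual \cite[2.6]{Goodwillie_calculus_2}, with the single-block partition realizing the minimum exactly as you compute (cf.\ \cite[A.8.3]{Dundas_Goodwillie_McCarthy}). Your separate verification of the $0$- and $1$-cube cases, where Blakers--Massey does not apply, and the observation that $k\geq 1$ forces the one-block partition to minimize, are precisely the points that need checking, and your arithmetic reproducing $f(d)$ and $g(d)$ on the nose is right.
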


The following theorem plays a key role in our homotopical analysis of the derived counit map below; it also provides an alternate proof, with stronger estimates, of the result in \cite{Arone_Kankaanrinta, Carlsson_equivariant} that the $\Loopt^\infty\Susp^\infty$-completion map $X\rarrow X^\wedge_{\Loopt^\infty\Susp^\infty}$ is a weak equivalence for any $1$-connected space $X$. It is motivated by Dundas \cite[2.6]{Dundas_relative_K_theory} and is the infinite or limit case of the closely related higher Freudenthal suspension theorem \cite{Blomquist_Harper_iterated_suspension}.

\begin{thm}[Higher stabilization theorem]
\label{thm:higher_stabilization}
Let $k\geq 1$, $W$ a finite set, and $\capX$ a $W$-cube of pointed spaces. If $\capX$ is $(k(\id+1)+1)$-cartesian, then so is $\capX\rarrow\tilde{\Loop}^\infty\Susp^\infty\capX$.
\end{thm}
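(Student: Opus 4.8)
The plan is to regard the stabilization map $\capX\rarrow\Loopt^\infty\Susp^\infty\capX$ as a map of $W$-cubes, equivalently a $(W\sqcup\{*\})$-cube, and to reduce the desired $f$-cartesian estimate (with $f=k(\id+1)+1$) to a sharp estimate for a \emph{single} suspension by exploiting that $\Loopt^\infty\Susp^\infty$ is a filtered homotopy colimit. Concretely, in the injective stable model structure the derived functor $\Loopt^\infty\Susp^\infty\capX=\Loop^\infty F(S\tensor\capX)$ is weakly equivalent to $\hocolim_r\Loop^r\Susp^r\capX$: fibrant replacement $F$ builds the associated $\Omega$-spectrum and $\Ev_0$ reads off its $0$-th space as this colimit. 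First I would record that $f$-cartesian-ness is preserved under filtered homotopy colimits, since the total homotopy fibers that measure cartesian-ness commute with filtered $\hocolim$ (the source cube being constant in $r$) and connectivity lower bounds pass to the colimit. Because the first suspension is the bottleneck---the composites $\capX\rarrow\Loop^r\Susp^r\capX$ do not degrade past the $r=1$ stage, as the individual maps become increasingly connected---it then suffices to prove that the comparison cube $\capX\rarrow\Loop\Susp\capX$ is $f$-cartesian; this is the higher Freudenthal suspension estimate, whose essential content lies in the subcubes involving the stabilization direction.

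For the single-suspension estimate I would argue subcube-by-subcube along the comparison direction, using the uniformity correspondence (Proposition \ref{prop:uniformity_correspondence}) as the device converting between cartesian and cocartesian estimates. Given a $d$-subcube $\capX'$ of $\capX$, the hypothesis makes $\capX'$ $(k(\id+1)+1)$-cartesian, hence $((k+1)(\id+1)-1)$-cocartesian. Because the reduced suspension $\Susp=S^1\Smash-$ preserves homotopy-cocartesian cubes and raises the connectivity of each total cofiber by one, the cube $\Susp\capX'$ is $((k+1)(\id+1))$-cocartesian, one unit better; this single extra unit is precisely the source of the sharp ``$+1$'' in $f$. Feeding this improved estimate back through the self-dual Blakers-Massey packaged in Proposition \ref{prop:uniformity_correspondence}, rather than a naive Blakers-Massey bound, and using that the total homotopy fiber of $X\rarrow\Loop\Susp X$ is quadratic in $X$ (so that connectivities roughly double), I expect to obtain that the comparison cube $\capX'\rarrow\Loop\Susp\capX'$ is $f$-cartesian in dimension $d+1$. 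The classical Freudenthal theorem supplies the anchoring case $d=0$ and confirms that the estimate is sharp.

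Finally I would assemble the pieces. Applying $\Loop^\infty$, which as $\Ev_0$ of an $\Omega$-spectrum preserves total homotopy fibers and the connectivity of connective objects, and then passing to the homotopy colimit over $r$, promotes the single-suspension estimate to the claimed estimate for $\capX\rarrow\Loopt^\infty\Susp^\infty\capX$, using the filtered-colimit stability established in the first step together with the identification $\Loopt^\infty\Susp^\infty\capX\wequiv\hocolim_r\Loop^r\Susp^r\capX$.

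The hard part will be the \emph{sharpness}: extracting the full $k(\id+1)+1$ rather than the weaker $k(\id+1)$ that a crude argument produces. The stable relation between total fibers and total cofibers of cubes of spectra costs one unit of cartesian-ness per extra cube dimension, so bounding $\Susp^\infty\capX$ directly and converting loses the decisive unit at cube dimension $\geq 2$; the proof must instead route the estimate through the self-dual correspondence of Proposition \ref{prop:uniformity_correspondence} together with the one-unit gain contributed by suspension on cocartesian-ness. A secondary technical obstacle is the model-categorical verification that fibrant replacement $F$ and $\Ev_0$ interact with total homotopy fibers and with the filtered homotopy colimit exactly as required, so that the reduction to finite suspensions is homotopically valid in symmetric spectra.
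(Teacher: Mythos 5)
The central step of your argument---deducing that the comparison $(d+1)$-cube $\capX'\rarrow\Loop\Susp\capX'$ is $(k(\id+1)+1)$-cartesian from the fact that $\Susp\capX'$ is $((k+1)(\id+1))$-cocartesian---is exactly where the work lies, and as written it does not go through. Proposition \ref{prop:uniformity_correspondence} is a statement about a \emph{single} cube: it converts a uniform cartesian estimate on $\capX'$ into a uniform cocartesian estimate on that same cube, and back. It cannot be ``fed'' the cocartesian-ness of the $d$-cube $\Susp\capX'$ and made to output the cartesian-ness of the $(d+1)$-cube $\capX'\rarrow\Loop\Susp\capX'$; to apply it to the latter cube you would first need to know that cube's cocartesian-ness, which is not what you computed (and which the loop functor obstructs, since $\Loop$ degrades cocartesian estimates). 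The heuristic that the total fiber of $X\rarrow\Loop\Susp X$ is ``quadratic'' likewise does not by itself yield cubical estimates. What is actually required is an application of Goodwillie's higher (dual) Blakers--Massey theorems \cite{Goodwillie_calculus_2} to a concrete auxiliary cube, and the paper's proof supplies one: it forms the cube $\capC$ concentrated at the terminal vertex on the iterated cofiber $C$ of $\capX$, shows $\capX\rarrow\capC$ is $(k(|W|+2)+1)$-cartesian by higher Blakers--Massey (using the cocartesian estimates furnished by Proposition \ref{prop:uniformity_correspondence}), observes that $\Susp^\infty$ carries the $\infty$-cocartesian cube $\capX\rarrow\capC$ to an $\infty$-cartesian one---a purely stable phenomenon with no finite-suspension analog---and then needs ordinary Freudenthal only for the single highly connected space $C$. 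Your proposal is missing this (or an equivalent) mechanism.

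Your outer reduction---writing $\Loopt^\infty\Susp^\infty\capX\wequiv\hocolim_r\Loop^r\Susp^r\capX$, noting that finite total homotopy fibers commute with filtered homotopy colimits, and claiming the first suspension is the bottleneck---is a legitimate alternative route in principle (it is close in spirit to the higher Freudenthal suspension theorem of the companion paper \cite{Blomquist_Harper_iterated_suspension}, of which this theorem is the limit case), but the bottleneck claim is not free: you must show that each stage map $\Loop^r\Susp^r\capX\rarrow\Loop^{r+1}\Susp^{r+1}\capX$ is at least $(k(\id+1)+1)$-cartesian, which requires running the single-suspension estimate on $\Susp^r\capX$ and tracking how $r$-fold looping degrades cartesian-ness against the gain from $r$-fold suspension. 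The paper's argument bypasses finite suspensions entirely, which is precisely what makes the stable case cleaner; if you want to pursue your route, the single-suspension step must be proved via higher Blakers--Massey applied to an explicit cube, not extracted from Proposition \ref{prop:uniformity_correspondence} alone.
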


\begin{proof}
Consider the case $|W|=0$. Suppose $\capX$ is a $W$-cube and $\capX_\emptyset$ is $k$-connected. We know by Freudenthal suspension, which can be understood as a consequence of the Blakers-Massey theorem (see, for instance, \cite[A.8.2]{Dundas_Goodwillie_McCarthy}), that the map $\capX_\emptyset\rarrow\tilde{\Loop}\Susp\capX_\emptyset$ is $(2k+1)$-connected. More generally, it follows by repeated application of Freudenthal suspension that the map $\capX_\emptyset\rarrow\tilde{\Loop}^\infty\Susp^\infty\capX_\emptyset$ is a $(2k+1)$-connected map between $k$-connected spaces. 

Consider the case $|W|\geq 1$. Suppose $\capX$ is a $W$-cube and $\capX$ is $(k(\id+1)+1)$-cartesian. Let's verify that $\capX\rarrow\tilde{\Loop}^\infty\Susp^\infty\capX$ is $(k(\id+1)+1)$-cartesian $(|W|+1)$-cube. It suffices to assume that $\capX$ is a cofibration $W$-cube; see \cite[1.13]{Goodwillie_calculus_2} and \cite[3.4]{Ching_Harper}. Let $C$ be the iterated cofiber of $\capX$ and $\capC$ the $W$-cube defined objectwise by $\capC_V={*}$ for $V\neq W$ and $\capC_W=C$. Then $\capX\rarrow\capC$ is $\infty$-cocartesian. Consider the commutative diagram
\begin{align}
\label{eq:diagram_of_cubes_for_higher_freudenthal}
\xymatrix{
  \capX\ar[r]\ar[d]_(0.4){(*)} & 
  \capC\ar[d]\\
  \tilde{\Loop}^\infty\Susp^\infty\capX\ar[r] & 
  \tilde{\Loop}^\infty\Susp^\infty\capC
}
\end{align}
of $|W|$-cubes. 

Let's verify that $(*)$ is $(k(|W|+2)+1)$-cartesian as a $(|W|+1)$-cube of pointed spaces. We know that $\capX$ is $((k+1)(\id+1)-1)$-cocartesian by Proposition \ref{prop:uniformity_correspondence}, and in particular, $C$ is $((k+1)(|W|+1)-1)$-connected. For $d<|W|$, any $(d+1)$ dimensional subcube of $\capX$ is $((k+1)(d+2)-1)=((k+1)(d+1)+k)$-cocartesian and any $d$ dimensional subcube of $\capX$ is $((k+1)(d+1)-1)$-cocartesian.  So if $\capX|T$ is some $d$-subcube of $\capX$ with $T$ not containing the terminal set $W$, then $\capX|T\rarrow\capC|T={*}$ is $(k+1)(d+1)$-cocartesian by \cite[1.7]{Goodwillie_calculus_2}. Furthermore, even if $T$ contains the terminal set $W$, we know that $\capX|T\rarrow\capC|T$ is still $(k+1)(d+1)$-cocartesian by \cite[1.7]{Goodwillie_calculus_2}; this is because $(k+1)(d+1)<(k+1)(|W|+1)-1$ since $k\geq 1$ and $d<|W|$. Hence $\capX|T\rarrow\capC|T$ is $(k+1)(d+1)$-cocartesian for any $d$-subcube $\capX|T$ of $\capX$. It follows easily from higher Blakers-Massey \cite[2.5]{Goodwillie_calculus_2} that $\capX\rarrow\capC$ is $(k(|W|+2)+1)$-cartesian. 

We know that $\Susp^\infty\capX\rarrow\Susp^\infty\capC$ is $\infty$-cocartesian and hence $\infty$-cartesian; therefore $\tilde{\Loop}^\infty\Susp^\infty\capX\rarrow\tilde{\Loop}^\infty\Susp^\infty\capC$ is $\infty$-cartesian. Also, $\capC\rarrow\tilde{\Loop}^\infty\Susp^\infty\capC$ is at least $(k(|W|+2)+1)$-cartesian since $C\rarrow\tilde{\Loop}^\infty\Susp^\infty C$ is $(2[(k+1)(|W|+1)-1]+1)$-connected by Freudenthal suspension; this is because the cartesian-ness of $\capC\rarrow\tilde{\Loop}^\infty\Susp^\infty\capC$ is the same as the connectivity of the map $\tilde{\Loop}^{|W|}\capC\rarrow\tilde{\Loop}^{|W|}\tilde{\Loop}^\infty\Susp^\infty\capC$ (by considering iterated homotopy fibers).

Putting it all together, it follows from diagram \eqref{eq:diagram_of_cubes_for_higher_freudenthal} and \cite[1.8]{Goodwillie_calculus_2} that the map $(*)$ is $(k(|W|+2)+1)$-cartesian. Doing this also on all subcubes gives the result.
\end{proof}

\subsection{Homotopical estimates and codegeneracy cubes}

\label{sec:cubical_diagrams_homotopical_analysis}

Here we prove Theorems \ref{thm:estimating_connectivity_of_maps_in_tower_C_of_Y} and \ref{thm:connectivities_for_map_that_commutes_sigma_into_inside_of_holim}. The following calculates the layers of the Tot tower; see, for instance, Bousfield-Kan \cite[X.6.3]{Bousfield_Kan}.

\begin{prop}
\label{prop:iterated_homotopy_fibers_calculation}
Let $Z$ be a cosimplicial pointed space and $n\geq 0$. There are natural zigzags of weak equivalences
\begin{align*}
  \hofib(\holim_{\Delta^{\leq n}}Z\rarrow\holim_{\Delta^{\leq n-1}}Z)
  \wequiv\Loop^n(\iter\hofib)\capY_n
\end{align*}
where $\capY_n$ denotes the canonical $n$-cube built from the codegeneracy maps of
\begin{align*}
\xymatrix{
  Z^0 &
  Z^1
  \ar[l]_-{s^0} &
  Z^2\ar@<-0.5ex>[l]_-{s^0}\ar@<0.5ex>[l]^-{s^1}
  \ \cdots\ Z^n
}
\end{align*}
 the $n$-truncation of $Z$; in particular, $\capY_0$ is the object (or $0$-cube) $Z^0$. We often refer to $\capY_n$ as the \emph{codegeneracy} $n$-cube associated to $Z$.
\end{prop}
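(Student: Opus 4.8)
The plan is to reduce to the classical Bousfield-Kan identification of the layers of the Tot tower, and then match the strict cosimplicial normalization with the total homotopy fiber of the codegeneracy cube. First I would replace $Z$ by a functorial Reedy fibrant model, so that $\holim_{\Delta^{\leq n}}Z$ is computed by the $n$-th partial totalization $\Tot_n Z$, and so that strict fibers compute homotopy fibers; working with a functorial fibrant replacement throughout is precisely what makes all the ensuing identifications natural and produces the asserted zigzags of weak equivalences (rather than honest maps).

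Next I would invoke the standard fiber sequence \cite[X.6.3]{Bousfield_Kan}
\[
  \Loop^n N^n Z \rarrow \Tot_n Z \rarrow \Tot_{n-1}Z,
\]
where $N^n Z=\bigcap_{i=0}^{n-1}\ker(s^i)$ denotes the $n$-th normalized cochain object. This sequence arises by applying $\hom(-,Z)$ to the cofibration of cosimplicial spaces $\Sk_{n-1}\Delta\hookrightarrow\Sk_n\Delta$; the relative cells are the nondegenerate $n$-simplices, so the cotensor is with the pair $(\Delta^n,\partial\Delta^n)$, which accounts simultaneously for the factor $\Loop^n$ and for the passage to the normalized object $N^n Z$.

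The remaining step is to identify $N^n Z$ with the iterated homotopy fiber of the codegeneracy $n$-cube $\capY_n$. Since $Z$ is fibrant, the codegeneracies are fibrations, so the strict total fiber of $\capY_n$---whose initial vertex is $Z^n$ and whose $n$ edge directions are the codegeneracies $s^0,\dots,s^{n-1}$---agrees with its total homotopy fiber $(\iter\hofib)\capY_n$; and by construction this strict total fiber is exactly $\bigcap_i\ker(s^i)=N^n Z$. Combining this with the fiber sequence above yields the natural zigzag
\[
  \hofib\bigl(\holim_{\Delta^{\leq n}}Z\rarrow\holim_{\Delta^{\leq n-1}}Z\bigr)
  \wequiv \Loop^n N^n Z \wequiv \Loop^n(\iter\hofib)\capY_n,
\]
as desired; the case $n=0$ is the degenerate statement $\capY_0=Z^0$.

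I expect the last step to be the main obstacle: matching the strict normalization with the total homotopy fiber of the codegeneracy cube while maintaining naturality in $Z$, without silently assuming point-set strictness. I would handle this by phrasing the identification entirely in terms of homotopy pullbacks---checking, by induction on $n$, that the total homotopy fiber of $\capY_n$ peels off one codegeneracy direction at a time compatibly with the matching-object filtration---so that fibrancy is used only to guarantee that the relevant strict limits model the homotopy limits, and the comparison maps assemble into a single natural zigzag of weak equivalences.
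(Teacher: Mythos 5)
Your proposal is correct and follows exactly the route the paper intends: the paper offers no proof of this proposition beyond the citation to Bousfield--Kan \cite[X.6.3]{Bousfield_Kan}, and your argument is the standard elaboration of that reference, namely the fiber sequence $\Loop^n N^n Z\rarrow\Tot_n Z\rarrow\Tot_{n-1}Z$ for a Reedy fibrant model together with the identification of $N^n Z$ with the total (homotopy) fiber of the codegeneracy $n$-cube via the matching-object description of Reedy fibrancy. The one subtlety you flag---that strict total fibers of the codegeneracy cube model total homotopy fibers---is handled correctly, since Reedy fibrancy at each level makes every subcube of $\capY_n$ a fibration cube.
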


The following is proved in Carlsson \cite[Section 6]{Carlsson}, Dugger \cite{Dugger_homotopy_colimits}, and Sinha \cite[6.7]{Sinha_cosimplicial_models}, and plays a key role in this paper; see also  Dundas-Goodwillie-McCarthy \cite{Dundas_Goodwillie_McCarthy} and Munson-Volic \cite{Munson_Volic_book_project}; it was exploited early on by Hopkins \cite{Hopkins_iterated_suspension}.

\begin{prop}
\label{prop:left_cofinality_truncated_delta}
Let $n\geq 0$. The composite
\begin{align*}
  \capP_0([n])\Iso P\Delta[n]\longrightarrow\Delta_\res^{\leq n}
  \subset\Delta^{\leq n}
\end{align*}
is left cofinal (i.e., homotopy initial). Here, $\capP_0([n])$ denotes the poset of all nonempty subsets of $[n]$ and $P\Delta[n]$ denotes the poset of non-degenerate simplices of the standard $n$-simplex $\Delta[n]$; see \cite[III.4]{Goerss_Jardine}.

\end{prop}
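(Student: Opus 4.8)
The plan is to prove the (a priori stronger) statement that the composite functor $u\colon \capP_0([n])\cong P\Delta[n]\rarrow\Delta^{\leq n}$ is homotopy initial, directly, via Quillen's Theorem~A: I would verify that for every object $[m]\in\Delta^{\leq n}$ (so $0\leq m\leq n$) the over-category $(u\downarrow[m])$, with objects the pairs $(S, u(S)\rarrow[m])$, has contractible nerve; left cofinality is then exactly this. First I would unwind $u$: under the identification of non-degenerate simplices of $\Delta[n]$ with nonempty subsets $S\subseteq[n]$, it sends $S$ to $[\,|S|-1\,]$ and an inclusion $S\subseteq S'$ to the corresponding face operator, so $u$ indeed factors through $\Delta_\res^{\leq n}$ and lands in $\Delta^{\leq n}$.

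Next I would set up a combinatorial model of the over-category. Identifying $[\,|S|-1\,]$ with $S$ via its order isomorphism, an object of $(u\downarrow[m])$ is a pair $(S,\phi)$ with $S\subseteq[n]$ nonempty and $\phi\colon S\rarrow[m]$ weakly order-preserving, and a morphism is an inclusion $S\subseteq S'$ with $\phi'|_S=\phi$. Recording $\phi$ by its graph $\{(s,\phi(s))\}\subseteq[n]\times[m]$ identifies $(u\downarrow[m])$ with the poset $\mathsf{M}_{n,m}$ of graphs of weakly monotone partial functions $[n]\rightharpoonup[m]$ with nonempty domain, ordered by inclusion. This is precisely the face poset of the simplicial complex $\Gamma_{n,m}$ whose vertices are the grid points $[n]\times[m]$ and whose faces are the pairwise-comonotone subsets (the monotone ``staircases''). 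Hence the nerve of $(u\downarrow[m])$ is the barycentric subdivision $\mathrm{sd}\,\Gamma_{n,m}$, and it suffices to prove that $\Gamma_{n,m}$ is contractible whenever $m\leq n$.

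The heart of the argument — and the step I expect to be the main obstacle — is this contractibility, because the obvious ``extend the partial function'' maps are \emph{not} functorial (extending a monotone function to the left can only decrease its leftmost value, with no a priori bound, so there is no naive deformation retraction; indeed for $m>n$ the complex need not be contractible, e.g. $\Gamma_{1,2}\simeq S^1$). I would instead argue by collapse, inducting on $n$ and using the standard fact that deleting a vertex whose link is contractible is a homotopy equivalence (it amounts to gluing a cone along a contractible subcomplex). The link of the grid point $(n,j)$ in $\Gamma_{n,m}$ is the lower-left staircase complex $\Gamma_{n-1,j}$, which is contractible by the inductive hypothesis exactly when $j\leq n-1$. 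Since $m\leq n$, the vertices $(n,0),\dots,(n,m-1)$ all satisfy $j\leq m-1\leq n-1$, so I would delete them one at a time, each deletion a homotopy equivalence (the link is supported in first coordinates $<n$, hence unaffected by the previous deletions). The remaining complex is exactly the faces supported in $[n-1]\times[m]$ together with those containing $(n,m)$, i.e. the cone $(n,m)\ast\Gamma_{n-1,m}$, which is contractible. This is where the hypothesis $m\leq n$ is used decisively; the base case $n=0$ (forcing $m=0$) is a single point.

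Finally I would assemble the pieces: contractibility of every $(u\downarrow[m])$ with $0\leq m\leq n$ gives, by Theorem~A, that $u$ is homotopy initial, i.e. left cofinal. Treating the composite in one stroke avoids separately analyzing the factor $\Delta_\res^{\leq n}\subset\Delta^{\leq n}$, and the conclusion agrees with Carlsson, Dugger, and Sinha. The only genuinely delicate input is the combinatorial collapsing lemma above; everything else is a formal unwinding of definitions together with the standard Theorem~A criterion.
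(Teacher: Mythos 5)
Your argument is correct, but it is worth saying up front that the paper does not actually prove this proposition: it is quoted from the literature (Carlsson, Dugger, and Sinha \cite[6.7]{Sinha_cosimplicial_models}), so your proposal is a self-contained replacement for a citation rather than a variant of an in-paper argument. Your route --- Quillen's Theorem~A applied to the composite $u\colon\capP_0([n])\rarrow\Delta^{\leq n}$ all at once, the identification of the overcategory $(u\downarrow[m])$ with the face poset of the ``staircase'' complex $\Gamma_{n,m}$ of weakly monotone partial function graphs in $[n]\times[m]$, and the vertex-deletion induction --- checks out at every step. The two points that carry the proof are exactly the ones you isolate: the link of $(n,j)$ in $\Gamma_{n,m}$ is $\Gamma_{n-1,j}$ (because a face containing $(n,j)$ can contain no other vertex with first coordinate $n$ and its remaining values are forced to lie in $[j]$), and the hypothesis $m\leq n$ enters only through requiring $j\leq n-1$ for the deleted vertices $(n,0),\dots,(n,m-1)$, with the residual complex being the cone $(n,m)\ast\Gamma_{n-1,m}$ regardless of whether $\Gamma_{n-1,m}$ is contractible. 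Your counterexample $\Gamma_{1,2}\simeq S^1$ correctly shows the restriction to $\Delta^{\leq n}$ is essential and that no naive extension retraction can exist. This is in the same spirit as the cited proofs (which likewise reduce to contractibility of these overcategories), but your explicit collapsing induction is arguably more elementary and more clearly localizes where $m\leq n$ is used; what you give up is only the reuse of a standard reference.
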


\begin{prop}
\label{prop:punctured_cube_calculation_of_holim_truncated_delta}
If $X\in\M^\Delta$ is objectwise fibrant, then the natural maps
\begin{align*}
  \holim\nolimits_{\Delta^{\leq n}}^\BK X&\xrightarrow{\wequiv}
  \holim\nolimits_{P\Delta[n]}^\BK X\Iso
  \holim\nolimits_{\capP_0([n])}^\BK X
\end{align*}
in $\M$ are weak equivalences; here, $\M$ is any simplicial model category.
\end{prop}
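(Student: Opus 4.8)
The plan is to handle the two displayed maps separately, reducing everything to the homotopy initiality already established in Proposition \ref{prop:left_cofinality_truncated_delta}; the present statement is then a formal consequence of the cofinality invariance of homotopy limits.

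First I would dispose of the right-hand map. Proposition \ref{prop:left_cofinality_truncated_delta} provides an honest \emph{isomorphism} of indexing categories $\capP_0([n])\iso P\Delta[n]$, and the Bousfield-Kan homotopy limit $\holim\nolimits_{(-)}^\BK$ is built from the cosimplicial replacement followed by totalization, both of which depend on the indexing category only up to isomorphism. Hence the induced map $\holim\nolimits_{P\Delta[n]}^\BK X\Iso\holim\nolimits_{\capP_0([n])}^\BK X$ is an isomorphism in $\M$, with no fibrancy hypothesis required; here the diagram over $\capP_0([n])$ is of course understood as the restriction of $X$ along the composite $\capP_0([n])\iso P\Delta[n]\rarrow\Delta$.

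For the left-hand map I would invoke the homotopy invariance of the Bousfield-Kan homotopy limit under restriction along a left cofinal (homotopy initial) functor. The composite $\capP_0([n])\iso P\Delta[n]\rarrow\Delta^{\leq n}$ is left cofinal by Proposition \ref{prop:left_cofinality_truncated_delta}, and $X$ is objectwise fibrant by hypothesis; the objectwise fibrancy is exactly what guarantees that the Bousfield-Kan construction computes the correct derived limit, so that the standard cofinality theorem for homotopy limits in a simplicial model category (see Bousfield-Kan \cite{Bousfield_Kan}, and Hirschhorn) applies. It yields that the natural restriction map $\holim\nolimits_{\Delta^{\leq n}}^\BK X\rarrow\holim\nolimits_{P\Delta[n]}^\BK X$ induced by this functor is a weak equivalence, which is precisely the first displayed map.

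The main point is that all of the genuine content---verifying that the punctured-cube poset maps homotopy initially into the truncated simplex category---has already been packaged into Proposition \ref{prop:left_cofinality_truncated_delta}, so no new homotopical estimates are needed beyond the objectwise fibrancy assumption. The only steps requiring care here are bookkeeping: matching the variance in the cofinality theorem so that the natural map runs from the homotopy limit over the \emph{larger} category $\Delta^{\leq n}$ down to the one over $P\Delta[n]$, and confirming that the functor whose cofinality we invoke is exactly the composite through $\Delta_\res^{\leq n}$ established in Proposition \ref{prop:left_cofinality_truncated_delta}.
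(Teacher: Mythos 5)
Your argument is correct and is exactly the (implicit) reasoning of the paper, which states this proposition without proof as an immediate consequence of Proposition \ref{prop:left_cofinality_truncated_delta}: the right-hand map is an isomorphism because $\capP_0([n])\iso P\Delta[n]$ is an isomorphism of posets, and the left-hand map is a weak equivalence by the standard cofinality theorem for Bousfield--Kan homotopy limits of objectwise fibrant diagrams along a left cofinal functor. Your bookkeeping of the direction of the natural map and of the role of objectwise fibrancy is also correct.
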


\begin{rem}
We follow the conventions and definitions in Bousfield-Kan \cite{Bousfield_Kan}, together with \cite{Blomquist_Harper_integral_chains} and Ching-Harper \cite{Ching_Harper_derived_Koszul_duality} for the various models of homotopy limits.
\end{rem}

\begin{defn}
\label{defn:the_wide_tilde_construction}
Let $Z$ be a cosimplicial pointed space and $n\geq 0$. Assume that $Z$ is objectwise fibrant and denote by $\function{Z}{\capP_0([n])}{\Space_*}$ the composite
\begin{align*}
  \capP_0([n])\rightarrow\Delta^{\leq n}
  \rightarrow\Delta
  \rightarrow\Space_*
\end{align*}
The \emph{associated $\infty$-cartesian $(n+1)$-cube built from $Z$}, denoted $\function{\widetilde{Z}}{\capP([n])}{\Space_*}$, is defined objectwise by
\begin{align*}
  \widetilde{Z}_V :=
  \left\{
    \begin{array}{rl}
    \holim^\BK_{T\neq\emptyset}Z_T,&\text{for $V=\emptyset$,}\\
    Z_V,&\text{for $V\neq\emptyset$}.
    \end{array}
  \right.
\end{align*}
\end{defn}

\begin{prop}[Uniformity of faces]
\label{prop:comparing_faces_of_coface_cube_with_codegeneracy_cube}
Let $Z\in(\Space_*)^\Delta$ and $n\geq 0$. Assume that $Z$ is objectwise fibrant. Let $\emptyset\neq T\subset[n]$ and $t\in T$. Then there is a weak equivalence
\begin{align*}
  (\iter\hofib)\partial_{\{t\}}^T\widetilde{Z}\wequiv
  \Omega^{|T|-1}(\iter\hofib)\capY_{|T|-1}
\end{align*}
in $\Space_*$, where $\capY_{|T|-1}$ denotes the codegeneracy $(|T|-1)$-cube associated to $Z$.
\end{prop}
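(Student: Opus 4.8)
The plan is to reduce the statement to a purely cubical comparison between a cube of cosimplicial coface maps and a cube of codegeneracy maps, and then to convert the former into the latter one direction at a time, accruing one loop per direction. First I would unwind the face $\partial_{\{t\}}^T\widetilde Z$: every vertex $V$ of this subcube contains $t$, hence is nonempty, so by Definition \ref{defn:the_wide_tilde_construction} we have $\widetilde Z_V = Z_V$, and under the cofinality identification $\capP_0([n])\cong P\Delta[n]\to\Delta^{\leq n}$ of Proposition \ref{prop:left_cofinality_truncated_delta} the vertex $V$ maps to $Z^{|V|-1}$ with edges given by the cosimplicial coface maps. Reindexing by $U := V\setminus\{t\}\subseteq R := T\setminus\{t\}$ and writing $m := |T|-1 = |R|$, the face $\partial_{\{t\}}^T\widetilde Z$ becomes an $m$-cube whose value at $U$ is $Z^{|U|}$ and whose structure maps are cofaces $d^i$; in particular the $\emptyset$-vertex of $\widetilde Z$ (the only place a homotopy limit enters) never appears, so this is an honest coface cube.

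The heart of the argument is the following lemma, which I would isolate for an arbitrary objectwise fibrant $Z$: the iterated homotopy fiber $(\iter\hofib)$ of the coface $m$-cube just described is naturally weakly equivalent to $\Omega^m(\iter\hofib)\capY_m$. Granting the lemma, the proposition is immediate with $m = |T|-1$. The proof of the lemma rests on one elementary input and one bookkeeping input. The elementary input is that whenever $f\colon A\to B$ admits a retraction $g$ (so $gf = \id$), one has a natural weak equivalence $\hofib(f)\wequiv\Omega\,\hofib(g)$; I would prove this by viewing $g$ as a fibration with section $f$ and comparing the two fibration sequences $\ast\to A\xrightarrow{\id}A$ and $\hofib(g)\to B\xrightarrow{g}A$ along the ladder determined by $(f,\id)$, which identifies $\hofib(f)$ with $\hofib(\ast\to\hofib(g))=\Omega\,\hofib(g)$.

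The bookkeeping input is the conversion step. Using that $(\iter\hofib)$ of an $m$-cube is the homotopy fiber of the induced map between the iterated homotopy fibers of the two $(m-1)$-faces perpendicular to a chosen direction, I would peel off the $m$ directions of $R$ one at a time. At each stage the chosen direction is a cube map assembled from cofaces; the simplicial identity $s^j d^j = s^j d^{j+1} = \id$ supplies, vertex by vertex, a codegeneracy retraction, and the identities $s^j d^i = d^{i-1}s^j$ (for $i>j+1$) together with $s^j s^i = s^i s^{j+1}$ (for $i\leq j$) guarantee that these vertexwise retractions assemble into a morphism of cubes compatible both with the coface directions not yet converted and with the codegeneracy directions already produced. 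Applying the elementary input to the induced map on iterated homotopy fibers of the $(m-1)$-faces then replaces one coface direction by a codegeneracy direction at the cost of a single $\Omega$. After $m$ such steps every direction has become a codegeneracy; a direct check of the resulting codegeneracy indices identifies the final cube with $\capY_m$, and the $m$ accumulated loops give the factor $\Omega^m$.

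The main obstacle I anticipate is precisely this bookkeeping: because the coface index attached to a given direction varies from vertex to vertex (it records the position at which the new element is inserted), the retraction is genuinely nonuniform across the cube, using different codegeneracies $s^j$ at different vertices. Verifying that these nonuniform retractions nonetheless cohere into a cube morphism—and that they remain compatible with the directions converted at earlier stages—is the delicate part; it reduces, however, to the three families of cosimplicial identities above, each of which is checked on the $2$-faces of the cube. I would organize the peeling in increasing order of the elements of $R\subseteq[n]$, which keeps the emerging codegeneracy indices in the canonical range and makes the final identification with $\capY_m$ transparent; the choice of $t\in T$ affects only the intermediate coface indices and not the terminal codegeneracy cube, which is why the right-hand side is independent of $t$.
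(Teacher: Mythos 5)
Your overall strategy---identify $\partial_{\{t\}}^T\widetilde{Z}$ with an honest coface $m$-cube ($m=|T|-1$, no homotopy limit vertex), then trade coface directions for codegeneracy directions one at a time using the fact that $\hofib(f)\wequiv\Omega\hofib(g)$ whenever $gf=\id$---is the right one; it is essentially the argument of the sources the paper cites for this statement (the paper itself gives no proof beyond the citation to Ching--Harper, Goodwillie, and Sinha), and both your reduction to a coface cube and your elementary retraction lemma are correct. The gap is in the step you yourself flag as delicate, and it is genuine: peeling the directions of $R=T\setminus\{t\}$ in increasing order with vertexwise codegeneracy retractions does not in general produce a morphism of cubes. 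Take $T=\{0,1,2\}$ and $t=2$, so $R=\{0,1\}$ and the face is the square with vertices $Z^0$, $Z^1$, $Z^1$, $Z^2$; computing insertion positions, the ``add $0$'' direction is $(d^0,d^0)$ and the ``add $1$'' direction is $(d^0,d^1)$. Peeling ``add $0$'' first, the retraction is forced to be $(s^0,s^0)$, since $s^kd^0=\id$ only for $k=0$; the required compatibility with the remaining direction is $s^0d^1=d^0s^0$, whose left side is the identity and whose right side is not. So the vertexwise retractions do not cohere and the peeling step fails. (Peeling ``add $1$'' first does work here, with the nonuniform retraction $(s^0,s^1)$; symmetrically, for $t=0$ the increasing order works and the decreasing order fails.)

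The reason your recipe cannot be uniform is that two different commutation patterns occur: when the peeled element lies below the other direction's element, the coface index upstairs is one larger and you need $s^kd^{j+1}=d^{j}s^{k}$ (valid only for $j\geq k+1$); when it lies above, the coface index is unchanged and you need $s^{k'}d^{j}=d^{j}s^{k}$ with $k'=k+1$, i.e.\ the identity $s^kd^i=d^is^{k-1}$ for $i<k$, which is absent from your list and forces the retraction index itself to vary across the cube in a way tied to the position of $t$ inside $T$. Which of the two patterns occurs, and hence the admissible peeling order and retraction indices, depends on $t$ and on the vertex; this is precisely the combinatorial content behind the word ``uniformity'' in the proposition's name (the answer depends only on $|T|$ even though the cofaces appearing do not). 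Your argument is repairable within the same strategy, but as written the key assembly step would fail.
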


\begin{proof}
This is proved in \cite{Ching_Harper_derived_Koszul_duality}; compare Goodwillie \cite[3.4]{Goodwillie_calculus_3} and Sinha \cite[7.2]{Sinha_cosimplicial_models}.
\end{proof}

\begin{thm}
\label{thm:cocartesian_and_cartesian_estimates}
Let $Y$ be a $\Kt$-coalgebra and $n\geq 1$. Consider the $\infty$-cartesian $(n+1)$-cube $\widetilde{\mathfrak{C}(Y)}$ in $\Space_*$ built from $\mathfrak{C}(Y)$. If $Y$ is $1$-connected, then
\begin{itemize}
\item[(a)] the cube $\widetilde{\mathfrak{C}(Y)}$ is $(2n+5)$-cocartesian in $\Space_*$,
\item[(b)] the cube $\Susp^\infty\widetilde{\mathfrak{C}(Y)}$ is $(2n+5)$-cocartesian in $\Mod_S$,
\item[(c)] the cube $\Susp^\infty\widetilde{\mathfrak{C}(Y)}$ is $(n+5)$-cartesian in $\Mod_S$.
\end{itemize}
\end{thm}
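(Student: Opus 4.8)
**

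The plan is to analyze the $\infty$-cartesian $(n+1)$-cube $\widetilde{\mathfrak{C}(Y)}$ by extracting uniform connectivity estimates on its faces, then applying the higher stabilization theorem together with the uniformity correspondence to propagate these estimates through $\Susp^\infty$. First I would establish the cocartesian estimate (a). By Proposition \ref{prop:comparing_faces_of_coface_cube_with_codegeneracy_cube}, each nontrivial face of $\widetilde{\mathfrak{C}(Y)}$ has iterated homotopy fiber weakly equivalent to $\Omega^{|T|-1}(\iter\hofib)\capY_{|T|-1}$, where $\capY_{|T|-1}$ is the codegeneracy cube associated to $\mathfrak{C}(Y)$. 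The key input here is a connectivity estimate for the iterated homotopy fibers of these codegeneracy cubes: since $Y$ is $1$-connected and $\mathfrak{C}(Y)^j=\Loopt^\infty\Kt^j Y=\Loop^\infty F(\K F)^j Y$, each level is a $0$-th space of a $1$-connected spectrum, and the codegeneracy maps $s^i$ are built from the counit $\varepsilon$, which inserts a cofree factor; one expects the $(|T|-1)$-cube $\capY_{|T|-1}$ to be highly cocartesian. Converting the resulting connectivity of $\Omega^{|T|-1}(\iter\hofib)\capY_{|T|-1}$ into a cocartesian-ness estimate for each face, and then assembling across all faces, yields that $\widetilde{\mathfrak{C}(Y)}$ is $(2n+5)$-cocartesian.

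Part (b) is then nearly immediate: the stabilization functor $\Susp^\infty=S\tensor-$ is a left Quillen functor that preserves homotopy cofiber sequences, hence preserves cocartesian-ness of cubes and the connectivity bounds thereof; applying $\Susp^\infty$ to the $(2n+5)$-cocartesian cube $\widetilde{\mathfrak{C}(Y)}$ gives a $(2n+5)$-cocartesian cube $\Susp^\infty\widetilde{\mathfrak{C}(Y)}$ in $\Mod_S$. Here I would take care that $\Susp^\infty$ commutes (up to weak equivalence) with the relevant iterated cofibers and that the connectivity of the total cofiber is preserved, which follows from $\Susp^\infty$ being a stabilization: it does not decrease connectivity.

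For part (c), the plan is to convert the cocartesian estimate in $\Mod_S$ into a cartesian estimate using the fact that in spectra every cocartesian cube is cartesian, but with the correct quantitative bound. In a stable setting a $W$-cube that is $c$-cocartesian is automatically $\infty$-cartesian in the weak sense, but the sharp numerical conversion is governed by a Blakers-Massey type statement for spectra where the gap is controlled dimension-by-dimension. Starting from the $(2n+5)$-cocartesian estimate of (b) and tracking the uniformity, I expect the cartesian-ness to drop by roughly a factor reflecting the cube dimension, landing at $(n+5)$-cartesian; I would verify this by inducting over subcubes and applying the spectral analog of higher Blakers-Massey, using that $\Susp^\infty\widetilde{\mathfrak{C}(Y)}$ consists of $1$-connected spectra. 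The main obstacle I anticipate is pinning down the sharp constants: getting the cocartesian estimate in (a) exactly to $2n+5$ requires a precise connectivity computation for the codegeneracy cubes $\capY_m$ of the \emph{fattened} cobar construction (keeping careful track of the factors of $F\wequiv\id$ and the extra loops $\Omega^{|T|-1}$ from Proposition \ref{prop:comparing_faces_of_coface_cube_with_codegeneracy_cube}), and then in (c) converting cocartesian-ness to cartesian-ness in spectra without losing more than the claimed amount. These constant-chasing steps, rather than any structural difficulty, are where the real work lies.
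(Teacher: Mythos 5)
There is a genuine gap in your argument for part (a), and it is exactly at the point where the paper's key ingredient enters. You reduce everything to a connectivity estimate for the total homotopy fibers of the codegeneracy cubes $\capY_{|T|-1}$ of $\mathfrak{C}(Y)$, and then assert that ``one expects'' these cubes to be highly cocartesian because the codegeneracies are built from the counit inserting a cofree factor. No mechanism is offered for proving this, and it is precisely the hard content of the theorem: in the paper the connectivity of the codegeneracy cubes (Proposition \ref{prop:iterated_hofiber_codegeneracy_cube}) is deduced \emph{from} the proof of Theorem \ref{thm:cocartesian_and_cartesian_estimates}, not used as an input to it. The missing idea is the higher stabilization theorem (Theorem \ref{thm:higher_stabilization}): applied inductively to the coface cubes obtained by iterating the stabilization map starting from the $1$-connected space $\Loopt^\infty Y$ (so $k=1$), and combined with the uniformity of faces in Proposition \ref{prop:comparing_faces_of_coface_cube_with_codegeneracy_cube}, it shows that each face $\partial_{W-V}^W\widetilde{\mathfrak{C}(Y)}$ with $\emptyset\neq V\subset W=[n]$ is $(|V|+2)$-\emph{cartesian}. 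The higher \emph{dual} Blakers--Massey theorem \cite[2.6]{Goodwillie_calculus_2} then converts these cartesian face estimates into the $(2n+5)$-cocartesian estimate for the whole $(n+1)$-cube. Note also that your proposed assembly step runs in the wrong direction: dual Blakers--Massey produces cocartesian-ness of the total cube from cartesian-ness of its faces; there is no general principle assembling cocartesian-ness of faces into cocartesian-ness of the total cube with the stated constants.

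Parts (b) and (c) are essentially fine but can be simplified. For (b) it is enough that $\Susp^\infty$ preserves cocartesian cubes, as you say. For (c) you do not need any spectral Blakers--Massey argument or induction over subcubes: in $\Mod_S$ the total cofiber and total fiber of a $(n+1)$-cube differ by a shift, so a $c$-cocartesian $(n+1)$-cube is $(c-(n+1)+1)$-cartesian (this is \cite[3.10]{Ching_Harper}), and $2n+5-(n+1)+1=n+5$ gives the claim directly.
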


\begin{proof}
Consider part (a) and let $W=[n]$. Our strategy is to use the higher dual Blakers-Massey theorem in Goodwillie \cite[2.6]{Goodwillie_calculus_2} to estimate how close the $W$-cube $\widetilde{\mathfrak{C}(Y)}$ in $\Space_*$ is to being cocartesian. We know from the higher stabilization theorem, Theorem  \ref{thm:higher_stabilization}, on iterations of the stabilization map applied to $\Loopt^\infty Y$, together with the uniformity enforced by Proposition \ref{prop:comparing_faces_of_coface_cube_with_codegeneracy_cube}, that for each nonempty subset $V\subset W$, the $V$-cube $\partial_{W-V}^W\widetilde{\mathfrak{C}(Y)}$ is $(|V|+2)$-cartesian; since it is $\infty$-cartesian by construction when $V=W$, it follows immediately from Goodwillie \cite[2.6]{Goodwillie_calculus_2} that $\widetilde{\mathfrak{C}(Y)}$ is $(2n+5)$-cocartesian in $\Space_*$, which finishes the proof of part (a). Part (b) follows from the fact that $\function{\Susp^\infty}{\Space_*}{\Mod_S}$ preserves cocartesian-ness. Part (c) follows easily from \cite[3.10]{Ching_Harper}.
\end{proof}

\begin{proof}[Proof of Theorem \ref{thm:connectivities_for_map_that_commutes_sigma_into_inside_of_holim}]
We want to estimate how connected the comparison map
\begin{align*}
  \Susp^\infty\holim\nolimits_{\Delta^{\leq n}} \mathfrak{C}(Y)\longrightarrow
  \holim\nolimits_{\Delta^{\leq n}} \Susp^\infty\,\mathfrak{C}(Y),
\end{align*}
is, which is equivalent to estimating how cartesian $\Susp^\infty\widetilde{\mathfrak{C}(Y)}$ is; Theorem \ref{thm:cocartesian_and_cartesian_estimates}(c) completes the proof.
\end{proof}

\begin{prop}
\label{prop:iterated_hofiber_codegeneracy_cube}
Let $Y$ be a $\Kt$-coalgebra and $n\geq 1$. Denote by $\capY_n$ the codegeneracy $n$-cube associated to the cosimplicial cobar construction $\mathfrak{C}(Y)$ of $Y$. If $Y$ is $1$-connected, then the total homotopy fiber of $\capY_n$ is $(2n+1)$-connected.
\end{prop}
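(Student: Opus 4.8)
The plan is to reduce the connectivity estimate for the total homotopy fiber of the codegeneracy cube $\capY_n$ to the cartesian-ness of a single top-dimensional face of the $\infty$-cartesian cube $\widetilde{\mathfrak{C}(Y)}$, using the uniformity of faces, and then to read off the sharp answer after accounting for the loop shift. First I would observe that $\mathfrak{C}(Y)$ is objectwise fibrant (each $\mathfrak{C}(Y)^n=\Loop^\infty F(\K F)^nY$ is a Kan complex), so Proposition \ref{prop:comparing_faces_of_coface_cube_with_codegeneracy_cube} applies. Taking $Z=\mathfrak{C}(Y)$, $T=[n]$, and any $t\in[n]$ (so that $|T|-1=n$ and $\capY_{|T|-1}=\capY_n$) yields a natural weak equivalence
\begin{align*}
  (\iter\hofib)\partial_{\{t\}}^{[n]}\widetilde{\mathfrak{C}(Y)}\wequiv\Omega^n(\iter\hofib)\capY_n.
\end{align*}
This converts the problem into estimating the total homotopy fiber of the $n$-dimensional face $\partial_{\{t\}}^{[n]}\widetilde{\mathfrak{C}(Y)}$ of the $(n+1)$-cube $\widetilde{\mathfrak{C}(Y)}$.

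Next I would invoke the cartesian-ness of exactly this face. It is precisely the $V$-cube $\partial_{W-V}^W\widetilde{\mathfrak{C}(Y)}$ with $W=[n]$ and $V=[n]\setminus\{t\}$, which is a nonempty proper subset of $W$ with $|V|=n$; along its coface directions it is, up to weak equivalence, the $n$-cube of iterated stabilization maps built from $\Loopt^\infty Y$, since $\Loopt^\infty\Kt^m Y\iso\Loopt^\infty\Susp^\infty(\Loopt^\infty\Kt^{m-1}Y)$. It was shown in the course of proving Theorem \ref{thm:cocartesian_and_cartesian_estimates}(a)---by $n$-fold application of the higher stabilization theorem (Theorem \ref{thm:higher_stabilization}, with $k=1$, so that $(\id+2)$-cartesianness propagates through the iteration)---that this face is $(|V|+2)=(n+2)$-cartesian. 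Since an $(n+2)$-cartesian $n$-cube has $(n+1)$-connected total homotopy fiber, the left-hand side of the displayed equivalence is $(n+1)$-connected.

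Finally I would desuspend: from $\Omega^n(\iter\hofib)\capY_n$ being $(n+1)$-connected and $\pi_j\bigl(\Omega^n(-)\bigr)=\pi_{j+n}(-)$, it follows that $(\iter\hofib)\capY_n$ has vanishing homotopy through degree $(n+1)+n=2n+1$, i.e.\ it is $(2n+1)$-connected, as claimed. The only delicate point is the bookkeeping around the loop shift: the factor $\Omega^n$ supplied by Proposition \ref{prop:comparing_faces_of_coface_cube_with_codegeneracy_cube} is exactly what upgrades the $(n+1)$-connectivity of the face fiber to the sharp $(2n+1)$-connectivity of the codegeneracy cube fiber (for instance, at $n=1$ this accounts for the retraction $s^0$ being $3$-connected rather than merely $2$-connected, because its section is a Freudenthal-connected coface). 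The genuine analytic content---the $(n+2)$-cartesian-ness of the face---already rests on the higher stabilization theorem and is in hand, so I expect the main obstacle to be purely organizational: verifying that the face singled out by $\partial_{\{t\}}^{[n]}$ is indeed the iterated-stabilization cube to which Theorem \ref{thm:higher_stabilization} applies, and keeping the cartesian/connectivity and loop-shift indices consistent.
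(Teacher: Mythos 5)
Your argument is, in substance, the paper's own: the printed proof of this proposition is a one-line pointer to the proof of Theorem \ref{thm:cocartesian_and_cartesian_estimates} together with Proposition \ref{prop:comparing_faces_of_coface_cube_with_codegeneracy_cube}, and you have correctly unpacked both ingredients---the identification of the face $\partial_{\{t\}}^{[n]}\widetilde{\mathfrak{C}(Y)}$ with the $n$-cube of iterated stabilization maps on $\Loopt^\infty Y$, its $(n+2)$-cartesian-ness via Theorem \ref{thm:higher_stabilization} with $k=1$, and the resulting $(n+1)$-connectivity of its total fiber, which Proposition \ref{prop:comparing_faces_of_coface_cube_with_codegeneracy_cube} identifies with $\Omega^n(\iter\hofib)\capY_n$.

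The one step that does not stand as written is the final ``desuspension.'' From $\Omega^n W$ being $(n+1)$-connected and $\pi_j(\Omega^n W)\iso\pi_{j+n}(W)$ you may conclude $\pi_i(W)=0$ for $n\leq i\leq 2n+1$, but this says nothing about $\pi_i(W)$ for $i<n$ (for instance $\Omega^2 K(\ZZ,1)$ is contractible while $K(\ZZ,1)$ is not $1$-connected); a space is not in general as connected as its $n$-fold loop space plus $n$. So the claimed $(2n+1)$-connectivity of $(\iter\hofib)\capY_n$ needs a supplementary low-degree argument. It is fillable: the vertices of $\capY_n$ are $1$-connected and every codegeneracy is split by a coface compatibly with the cosimplicial identities, so one can run an induction on $n$ (the total fiber of $\capY_n$ is the homotopy fiber of a map between total fibers of two codegeneracy $(n-1)$-cubes, each $(2n-1)$-connected by the inductive hypothesis, hence is $(2n-2)$-connected, and $2n-2\geq n-1$), which together with the vanishing in the range $[n,2n+1]$ gives the statement. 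Note also that the downstream application (the proof of Theorem \ref{thm:estimating_connectivity_of_maps_in_tower_C_of_Y}) only uses the looped statement that $\Omega^n(\iter\hofib)\capY_n$ is $(n+1)$-connected, which your argument does establish; the gap concerns only the unlooped form in which the proposition is stated.
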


\begin{proof}
This follows immediately from the proof of Theorem \ref{thm:cocartesian_and_cartesian_estimates}, together with Proposition \ref{prop:comparing_faces_of_coface_cube_with_codegeneracy_cube}.
\end{proof}

\begin{proof}[Proof of Theorem \ref{thm:estimating_connectivity_of_maps_in_tower_C_of_Y}]
The homotopy fiber of the map \eqref{eq:tower_map_from_n_th_stage_to_next_lower_stage} is weakly equivalent to $\Loopt^n$ of the total homotopy fiber of the codegeneracy $n$-cube $\capY_{n}$ associated to $\mathfrak{C}(Y)$ by Proposition \ref{prop:iterated_homotopy_fibers_calculation}, hence by Proposition \ref{prop:iterated_hofiber_codegeneracy_cube} the map \eqref{eq:tower_map_from_n_th_stage_to_next_lower_stage} is $(n+2)$-connected.
\end{proof}

As an immediate corollary of Theorem \ref{thm:estimating_connectivity_of_maps_in_tower_C_of_Y}, we get the following.

\begin{thm}
If $Y$ is a $1$-connected $\Kt$-coalgebra spectrum, then the homotopy spectral sequence
\begin{align*}
  E^2_{-s,t} &= \pi^s\pi_t \mathfrak{C}(Y)
  \Longrightarrow
  \pi_{t-s}\holim\nolimits_\Delta \mathfrak{C}(Y)
\end{align*}
converges strongly; compare with Ching-Harper \cite{Ching_Harper_derived_Koszul_duality}.
\end{thm}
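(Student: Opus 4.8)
The plan is to deduce strong convergence directly from the increasing connectivity of the $\holim$ tower supplied by Theorem~\ref{thm:estimating_connectivity_of_maps_in_tower_C_of_Y}. Recall (Bousfield-Kan \cite[IX, X]{Bousfield_Kan}) that the indicated homotopy spectral sequence is exactly the spectral sequence associated to the tower of fibrations $\{\holim_{\Delta^{\leq n}}\mathfrak{C}(Y)\}_n$ whose homotopy inverse limit is $\holim_\Delta\mathfrak{C}(Y)$. Since $Y$ is $1$-connected, each stage $\holim_{\Delta^{\leq n}}\mathfrak{C}(Y)$ is $1$-connected by Theorem~\ref{thm:estimating_connectivity_of_maps_in_tower_C_of_Y}, so there are no basepoint or fundamental group subtleties, and in the range that matters the spectral sequence is one of abelian groups.

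First I would translate the connectivity estimate into a statement about homotopy groups. By Theorem~\ref{thm:estimating_connectivity_of_maps_in_tower_C_of_Y} the tower map
\[
  \holim_{\Delta^{\leq n}}\mathfrak{C}(Y)\longrightarrow\holim_{\Delta^{\leq n-1}}\mathfrak{C}(Y)
\]
is $(n+2)$-connected, hence the induced map on $\pi_q$ is an isomorphism for $q\leq n+1$. Fixing a total degree $q$, it follows that for every $n\geq q-1$ the tower map on $\pi_q$ is an isomorphism; in other words, the tower $\{\pi_q\holim_{\Delta^{\leq n}}\mathfrak{C}(Y)\}_n$ is eventually constant. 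In particular it is Mittag-Leffler, so $\lim^1$ vanishes and the canonical map $\pi_q\holim_\Delta\mathfrak{C}(Y)\to\lim_n\pi_q\holim_{\Delta^{\leq n}}\mathfrak{C}(Y)$ is an isomorphism onto the stable value.

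Next I would invoke the standard convergence criterion: a tower of fibrations whose maps have connectivity increasing without bound---equivalently, whose homotopy groups are eventually constant in each fixed degree---gives rise to a completely (strongly) convergent homotopy spectral sequence, with $E_\infty^{-s,t}$ identified with the associated graded of a finite, exhaustive, Hausdorff filtration on $\pi_{t-s}\holim_\Delta\mathfrak{C}(Y)$ (Bousfield-Kan \cite[IX.5]{Bousfield_Kan}). Because the tower stabilizes in each total degree, this filtration is finite there, and strong convergence follows formally.

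The only point requiring care is the bookkeeping that converts the ``$(n+2)$-connected'' estimate into eventual constancy with the correct indexing, together with confirming that the induced filtration on each $\pi_{t-s}\holim_\Delta\mathfrak{C}(Y)$ is finite; once the connectivity of the tower maps is seen to increase without bound, no further homotopical input is needed. I would conclude by noting the parallel with the convergence discussion in Ching-Harper \cite{Ching_Harper_derived_Koszul_duality}.
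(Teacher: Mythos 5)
Your proposal is correct and is exactly the argument the paper intends: the paper's proof is the one-line statement that strong convergence ``follows from the connectivity estimates in Theorem \ref{thm:estimating_connectivity_of_maps_in_tower_C_of_Y},'' and your write-up simply spells out the standard Bousfield--Kan deduction (eventual constancy of $\pi_q$ along the tower, Mittag-Leffler, vanishing $\lim^1$, finite filtration in each total degree). No difference in approach, only in level of detail.
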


\begin{proof}
This follows from the connectivity estimates in Theorem \ref{thm:estimating_connectivity_of_maps_in_tower_C_of_Y}.
\end{proof}

These types of homotopy spectral sequences have been studied in Bousfield-Kan \cite{Bousfield_Kan_spectral_sequence}, Bendersky-Curtis-Miller \cite{Bendersky_Curtis_Miller} and Bendersky-Thompson \cite{Bendersky_Thompson}.

\section{Background on simplicial structures}
\label{sec:simplicial_structures}

It will be useful to recall, in this background section, the simplicial structures on pointed spaces and $S$-modules.

\begin{defn}
\label{defn:simplicial_structure_pointed_spaces}
Let $X,X'$ be pointed spaces and $K$ a simplicial set. The \emph{tensor product} $X\tensordot K$ in $\Space_*$, \emph{mapping object} $\hombold_{\Space_*}(K,X)$ in $\Space_*$, and \emph{mapping space} $\Hombold_{\Space_*}(X,X')$ in $\sSet$ are defined by
\begin{align*}
  X\tensordot K &:=X\Smash K_+\\
  \hombold_{\Space_*}(K,X')&:=\hombold_*(K_+,X')\\
  \Hombold_{\Space_*}(X,X')_n &:= \hom_{\Space_*}(X\tensordot\Delta[n],X')
\end{align*}
where the \emph{pointed mapping space} $\hombold_*(X,X')$ in $\Space_*$ is $\Hombold_{\Space_*}(X,X')$ pointed by the constant map; see \cite[II.3]{Goerss_Jardine}.
\end{defn}

\begin{defn}
\label{defn:simplicial_structure_ModS}
Let $Y,Y'$ be $S$-modules and $K$ a simplicial set. The \emph{tensor product} $Y\tensordot K$ in $\Mod_S$, \emph{mapping object} $\hombold_{\Mod_S}(K,Y)$ in $\Mod_S$, and \emph{mapping space} $\Hombold_{\Mod_S}(Y,Y')$ in $\sSet$ are defined by
\begin{align*}
  Y\tensordot K &:=Y\Smash K_+\\
  \hombold_{\Mod_S}(K,Y')&:=\Map(K_+,Y')\\
  \Hombold_{\Mod_S}(Y,Y')_n &:= \hom_{\Mod_S}(Y\tensordot\Delta[n],Y')
\end{align*}
where $\Map(K_+,Y')$ denotes the function $S$-module; see \cite[2.2.9]{Hovey_Shipley_Smith}. 
\end{defn}

We sometimes drop the $\Space_*$ and $\Mod_S$ decorations from the notation and simply write $\Hombold$ and $\hombold$.

\begin{prop}
With the above definitions of mapping object, tensor product, and mapping space the categories of pointed spaces $\Space_*$ and $S$-modules $\Mod_S$  are simplicial model categories.
\end{prop}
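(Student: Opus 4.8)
The plan is to verify the two defining axioms of a simplicial model category in the sense of Quillen (see \cite[II.2]{Goerss_Jardine} and \cite[4.2.18]{Hovey}): first, that each of $\Space_*$ and $\Mod_S$ is tensored, cotensored, and enriched over $\sSet$ with the stated structure, satisfying the enriched two-variable adjunction; and second, the pushout-product (corner map) axiom SM7 relating cofibrations, fibrations, and the mapping spaces. The underlying model structures---the usual one on pointed simplicial sets and the injective stable one on symmetric spectra---are the standard ones, established in \cite[II.3]{Goerss_Jardine} and \cite{Hovey_Shipley_Smith}, so no work is needed there. The key unifying observation is that in both cases the simplicial tensor is built from the functor $\function{(-)_+}{\sSet}{\Space_*}$, $K\mapsto K_+$ (adding a disjoint basepoint), composed with the smash product $\Smash$ on $\Space_*$ and on $\Mod_S$, respectively; since $(-)_+$ is strong symmetric monoidal and left adjoint to the forgetful functor, both categories become tensored and cotensored modules over the symmetric monoidal category $(\Space_*,\Smash)$, and the simplicial structure arises by restriction along $(-)_+$.

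For the first axiom, I would check the natural adjunction isomorphisms
\begin{align*}
  \hom_{\Space_*}(X\tensordot K,X')\iso
  \hom_{\sSet}(K,\Hombold_{\Space_*}(X,X'))\iso
  \hom_{\Space_*}(X,\hombold_{\Space_*}(K,X'))
\end{align*}
and their analogues for $\Mod_S$. These follow directly from Definitions \ref{defn:simplicial_structure_pointed_spaces} and \ref{defn:simplicial_structure_ModS}, the defining adjunction between $\Smash$ and the internal mapping object (resp. the function $S$-module $\Map(-,-)$ of \cite[2.2.9]{Hovey_Shipley_Smith}), and the adjunction $(-)_+\dashv U$ between $\sSet$ and $\Space_*$; upgrading these to a genuine enriched two-variable adjunction is then a routine coherence check.

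The actual content lies in SM7: for a cofibration $\function{i}{A}{B}$ and a fibration $\function{p}{X}{Y}$, the induced corner map
\begin{align*}
  \Hombold(B,X)\longrightarrow
  \Hombold(A,X)\times_{\Hombold(A,Y)}\Hombold(B,Y)
\end{align*}
should be a fibration of simplicial sets, trivial whenever $i$ or $p$ is. By the enriched adjunction this is equivalent to the pushout-product statement: for a cofibration $K\rarrow L$ in $\sSet$ and a cofibration $A\rarrow B$ in $\Space_*$ (resp. $\Mod_S$), the pushout-product map
\begin{align*}
  A\tensordot L\coprod_{A\tensordot K}B\tensordot K\longrightarrow B\tensordot L
\end{align*}
is a cofibration, trivial if either input is. Since $(-)_+$ carries monomorphisms of simplicial sets (the cofibrations of $\sSet$) to cofibrations of $\Space_*$, this reduces to the pushout-product axiom for the symmetric monoidal model structures $(\Space_*,\Smash)$ and $(\Mod_S,\Smash)$.

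The hard part is really just confirming the pushout-product axiom for the injective stable model structure on $\Mod_S$; once that monoidal axiom is in hand, SM7 for the simplicial structure is immediate, because the simplicial tensoring factors through the smash product and $(-)_+$ preserves (trivial) cofibrations. I would therefore finish by citing the monoidal model structure results for symmetric spectra in \cite{Hovey_Shipley_Smith}, thereby completing the verification.
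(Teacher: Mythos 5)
Your overall strategy coincides with the paper's, whose entire proof is the citation ``this is proved, for instance, in \cite[II.3]{Goerss_Jardine} and \cite{Hovey_Shipley_Smith}''; you have simply expanded that citation into an explicit checklist of the axioms being invoked, and for $\Space_*$ your outline (two-variable adjunction via $(-)_+\dashv U$, then SM7 via the pushout-product for the smash product of pointed simplicial sets) is exactly what those sources contain.

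The one step that does not go through as written concerns $\Mod_S$. You reduce SM7 to the full monoidal pushout-product axiom for $(\Mod_S,\Smash)$ and propose to close the argument by citing the monoidal model structure results of \cite{Hovey_Shipley_Smith}. But those results are proved for the stable model structure whose cofibrations are the projective ($S$-)cofibrations; the paper works with the \emph{injective} stable model structure, whose cofibrations are all monomorphisms, and that structure is not shown in \cite{Hovey_Shipley_Smith} to be a monoidal model category (the smash product of symmetric spectra is a coend and does not interact with arbitrary monomorphisms the way your reduction requires). Fortunately the full monoidal axiom is strictly more than you need: the simplicial tensor $Y\tensordot K=Y\Smash K_+$ is computed levelwise, $(Y\Smash K_+)_n=Y_n\Smash K_+$, so the pushout-product of a monomorphism of simplicial sets with a monomorphism of symmetric spectra is a levelwise pushout-product of pointed simplicial sets and hence a monomorphism, while the ``trivial'' half follows because $-\Smash K_+$ preserves level equivalences and stable equivalences. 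Substituting this levelwise argument for the appeal to full monoidality repairs the gap and lands you on the same citations the paper uses.
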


\begin{proof}
This is proved, for instance, in \cite[II.3]{Goerss_Jardine} and \cite{Hovey_Shipley_Smith}.
\end{proof}

Recall that the stabilization adjunction $(\Susp^\infty, \Loop^\infty)$ in \eqref{eq:suspension_adjunction} is a Quillen adjunction with left adjoint on top; in particular, for $X,Y\in\Space_*$ there is an isomorphism
\begin{align}
\label{eq:hom_set_adjunction_stabilization_general}
  \hom(\Susp^\infty X,Y)\Iso\hom(X,\Loop^\infty Y)
\end{align}
in $\Set$, natural in $X,Y$.

The following proposition, which follows from Goerss-Jardine \cite[II.2.9]{Goerss_Jardine}, verifies that the stabilization adjunction \eqref{eq:suspension_adjunction} meshes nicely with the simplicial structure.

\begin{prop}
\label{prop:useful_properties_of_the_adjunction}
Let $X$ be a pointed space, $Y$ an $S$-module, and $K,L$ simplicial sets.  Then
\begin{itemize}
\item[(a)] there is a natural isomorphism
$
  \sigma\colon\thinspace \Susp^\infty(X)\tensordot K \xrightarrow{\Iso}\Susp^\infty(X\tensordot K)
$;
\item[(b)] there is an isomorphism
\begin{align*}
  \Hombold(\Susp^\infty X,Y)\Iso\Hombold(X,\Loop^\infty Y)
\end{align*}
in $\sSet$, natural in $X,Y$, that extends the adjunction isomorphism in  \eqref{eq:hom_set_adjunction_stabilization_general};
\item[(c)] there is an isomorphism
\begin{align*}
  \Loop^\infty\hombold(K,Y)\Iso\hombold(K,\Loop^\infty Y)
\end{align*}
in $\Space_*$, natural in $K,Y$.
\item[(d)] there is a natural map
$
  \function{\sigma}{\Loop^\infty(Y)\tensordot K}{\Loop^\infty(Y\tensordot K)}
$
induced by $\Loop^\infty$.
\item[(e)] the functors $\Susp^\infty$ and $\Loop^\infty$ are simplicial functors (Remark \ref{rem:simplicial_functors}) with the structure maps $\sigma$ of (a) and (d), respectively.
\end{itemize}
\end{prop}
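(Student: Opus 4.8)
The plan is to establish parts (a)--(e) in order, reducing each to the levelwise description of the simplicial structure on $\Mod_S$ together with the standard exponential law for pointed spaces in Goerss-Jardine \cite[II.2.9]{Goerss_Jardine} and the description of the function $S$-module in Hovey-Shipley-Smith \cite[2.2.9]{Hovey_Shipley_Smith}.

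First I would prove (a) directly from the symmetric monoidal structure on $\Mod_S$. Since $\Susp^\infty(X)=S\tensor X$ and $X\tensordot K=X\Smash K_+$, the desired map $\sigma$ is the coherence isomorphism $(S\tensor X)\Smash K_+\Iso S\tensor(X\Smash K_+)$, obtained levelwise from the associativity and symmetry isomorphisms of the smash product of pointed spaces (using $(S^n\Smash X)\Smash K_+\iso S^n\Smash(X\Smash K_+)$); equivalently, $\Susp^\infty$ is a simplicial left adjoint and hence preserves the tensoring $-\tensordot K$ up to natural isomorphism. Naturality in $X$ and $K$ is immediate.

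Next I would deduce (b) from (a) together with the set-level adjunction isomorphism \eqref{eq:hom_set_adjunction_stabilization_general}. Unwinding the definition of the mapping space gives
\begin{align*}
  \Hombold(\Susp^\infty X,Y)_n
  &=\hom(\Susp^\infty X\tensordot\Delta[n],Y)
  \Iso\hom(\Susp^\infty(X\tensordot\Delta[n]),Y)\\
  &\Iso\hom(X\tensordot\Delta[n],\Loop^\infty Y)
  =\Hombold(X,\Loop^\infty Y)_n,
\end{align*}
where the first isomorphism is (a) and the second is \eqref{eq:hom_set_adjunction_stabilization_general}. The one genuine check is that these levelwise bijections are compatible with the simplicial operators induced by the cosimplicial object $[n]\mapsto\Delta[n]$, so that they assemble into an isomorphism of simplicial sets rather than a mere family of bijections; this follows from the naturality of $\sigma$ and of the adjunction.

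For (c) and (d) I would use that the simplicial structure on $\Mod_S$ is built levelwise from that on $\Space_*$. Concretely $\Loop^\infty=\Ev_0$, and by \cite[2.2.9]{Hovey_Shipley_Smith} we have $\Ev_0\Map(K_+,Y)\Iso\hombold_*(K_+,Y_0)$, which is precisely the asserted isomorphism (c) after the identification $\hombold_{\Space_*}(K,\Loop^\infty Y)=\hombold_*(K_+,Y_0)$. Similarly $(Y\Smash K_+)_0\iso Y_0\Smash K_+$ levelwise, so the structure map $\sigma$ of (d) is the canonical comparison $\Loop^\infty(Y)\tensordot K\rarrow\Loop^\infty(Y\tensordot K)$; it may be defined intrinsically as the mate, under the tensor--hom adjunction $(-\tensordot K,\hombold(K,-))$, of the enrichment of $\Loop^\infty$ coming from (b), and we only need that it is a natural map. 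Finally, (e) follows by assembling (a)--(d): $\Susp^\infty$ carries the simplicial-functor structure with structure map $\sigma$ of (a) and $\Loop^\infty$ the one with $\sigma$ of (d), and the unit and associativity coherence axioms (Remark \ref{rem:simplicial_functors}) follow from the monoidal coherence of $\Smash$ and naturality; alternatively, (b) exhibits $(\Susp^\infty,\Loop^\infty)$ as a simplicial adjunction, so both functors are automatically simplicial. I expect the only real obstacle to be the bookkeeping in (b)---verifying that the levelwise adjunction bijections respect the full simplicial structure---since all remaining parts are immediate from the levelwise definitions and the cited exponential laws.
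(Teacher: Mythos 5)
Your proposal is correct and takes essentially the same route as the paper, which simply cites Goerss--Jardine II.2.9 for this proposition: the key input is the compatibility isomorphism $\Susp^\infty(X)\tensordot K\iso\Susp^\infty(X\tensordot K)$ of part (a), after which (b)--(e) follow formally from the adjunction and the levelwise definitions, exactly as you argue. (One small caution: your parenthetical ``equivalently, $\Susp^\infty$ is a simplicial left adjoint'' should not be used as the proof of (a), since that is what (e) establishes; your direct coherence argument via $(S^n\Smash X)\Smash K_+\iso S^n\Smash(X\Smash K_+)$ is the right one.)
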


\begin{rem}
\label{rem:simplicial_functors}
For a useful reference on simplicial functors in the context of homotopy theory, see Hirschhorn \cite[9.8.5]{Hirschhorn}.
\end{rem}

The following proposition is fundamental to this paper.

\begin{prop}
\label{prop:unit_and_counit_are_simplicial}
Consider the monad $\Loop^\infty\Susp^\infty$ on pointed spaces $\Space_*$ and the comonad $\Susp^\infty\Loop^\infty$ on $S$-modules $\Mod_S$ associated to the adjunction $(\Susp^\infty,\Loop^\infty)$ in \eqref{eq:suspension_adjunction}. The associated natural transformations
\begin{align*}
  \id\xrightarrow{\eta} \Loop^\infty\Susp^\infty,\quad\quad\quad
  &\id\xleftarrow{\varepsilon}\Susp^\infty\Loop^\infty\\
  \Loop^\infty\Susp^\infty\Loop^\infty\Susp^\infty\rarrow \Loop^\infty\Susp^\infty\quad\quad\quad
  &\Susp^\infty\Loop^\infty\Susp^\infty\Loop^\infty\xleftarrow{m}\Susp^\infty\Loop^\infty
\end{align*}
are simplicial natural transformations.
\end{prop}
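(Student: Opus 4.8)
The plan is to deduce all four claims from one fact---that $(\Susp^\infty,\Loop^\infty)$ is a \emph{simplicial} adjunction---together with two purely formal closure properties of simplicial natural transformations. Recall that a natural transformation $\theta\colon F\to G$ between simplicial functors $F,G\colon\CC\to\DD$ (with structure maps $\sigma_F,\sigma_G$ as in Proposition \ref{prop:useful_properties_of_the_adjunction}(e) and Remark \ref{rem:simplicial_functors}) is \emph{simplicial} provided that, for every object $X$ and simplicial set $K$, the square
\begin{align*}
\xymatrix{
  FX\tensordot K\ar[r]^-{\sigma_F}\ar[d]_-{\theta_X\tensordot K} & F(X\tensordot K)\ar[d]^-{\theta_{X\tensordot K}}\\
  GX\tensordot K\ar[r]_-{\sigma_G} & G(X\tensordot K)
}
\end{align*}
commutes. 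First I would record two formal observations, each a one-line diagram chase with this square: (i) a composite of simplicial functors is simplicial, with structure map the composite of the structure maps; and (ii) whiskering a simplicial natural transformation on either side by a simplicial functor again yields a simplicial natural transformation. Since $\Susp^\infty$ and $\Loop^\infty$ are simplicial functors by Proposition \ref{prop:useful_properties_of_the_adjunction}(e), observation (i) shows that $\Loop^\infty\Susp^\infty$ and $\Susp^\infty\Loop^\infty$ are simplicial functors, so the four transformations live between simplicial functors and it makes sense to ask whether they are simplicial.

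Next I would verify that the unit $\eta\colon\id\to\Loop^\infty\Susp^\infty$ and counit $\varepsilon\colon\Susp^\infty\Loop^\infty\to\id$ are simplicial. The key input is Proposition \ref{prop:useful_properties_of_the_adjunction}(b): the set-level adjunction isomorphism \eqref{eq:hom_set_adjunction_stabilization_general} upgrades to a natural isomorphism of mapping spaces $\Hombold(\Susp^\infty X,Y)\Iso\Hombold(X,\Loop^\infty Y)$ in $\sSet$, which is precisely the enrichment making $(\Susp^\infty,\Loop^\infty)$ a simplicial adjunction (see Hirschhorn \cite[9.8.5]{Hirschhorn}). For a simplicial adjunction the unit and counit are automatically simplicial natural transformations; concretely, I would check the displayed square for $\eta$ (and dually for $\varepsilon$) by transposing across this simplicial isomorphism, using that $\eta_X$ is the transpose of $\id_{\Susp^\infty X}$, that $\varepsilon_Y$ is the transpose of $\id_{\Loop^\infty Y}$, and that the tensor structure maps $\sigma$ of Proposition \ref{prop:useful_properties_of_the_adjunction}(a),(d) are interchanged under the transposition.

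Finally I would handle the monad multiplication and comonad comultiplication by recognizing them as whiskerings of the transformations already treated: the multiplication $\Loop^\infty\Susp^\infty\Loop^\infty\Susp^\infty\to\Loop^\infty\Susp^\infty$ is $\Loop^\infty\varepsilon\Susp^\infty$, and the comultiplication $\Susp^\infty\Loop^\infty\to\Susp^\infty\Loop^\infty\Susp^\infty\Loop^\infty$ is $\Susp^\infty\eta\Loop^\infty$. By observation (ii), whiskering the simplicial natural transformations $\varepsilon$ and $\eta$ by the simplicial functors $\Loop^\infty$ and $\Susp^\infty$ produces simplicial natural transformations, which covers all four cases.

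The hard part will be the middle step, and even there the difficulty is bookkeeping rather than conceptual: one must confirm that the mapping-space isomorphism of Proposition \ref{prop:useful_properties_of_the_adjunction}(b) genuinely intertwines the two tensor structure maps $\sigma$ under transposition, so that the abstract unit and counit supplied by the simplicial adjunction coincide with $\eta$ and $\varepsilon$ equipped with the specific $\sigma$-compatibility squares above. Once that compatibility is in hand, every remaining assertion follows formally from the closure properties (i) and (ii).
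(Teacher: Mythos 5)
Your argument is correct. The paper does not actually supply a proof here---it declares the verification ``an exercise left to the reader'' with a pointer to the proof of 3.16 in Ching--Harper---so there is no written argument to compare against, but the route you take (reduce everything to the simpliciality of $\eta$ and $\varepsilon$ for the simplicial adjunction $(\Susp^\infty,\Loop^\infty)$, then get the multiplication $\Loop^\infty\varepsilon\Susp^\infty$ and comultiplication $\Susp^\infty\eta\Loop^\infty$ for free from closure of simplicial natural transformations under whiskering by simplicial functors) is the standard one and is surely what the authors intend. The one step you rightly flag as needing care, that the isomorphism $\Hombold(\Susp^\infty X,Y)\Iso\Hombold(X,\Loop^\infty Y)$ of Proposition \ref{prop:useful_properties_of_the_adjunction}(b) intertwines the two structure maps $\sigma$, is essentially automatic once one observes that the map $\sigma$ of part (d) is \emph{defined} as the adjoint transpose of $\Susp^\infty(\Loop^\infty Y\tensordot K)\Iso\Susp^\infty\Loop^\infty Y\tensordot K\xrightarrow{\varepsilon\tensordot K}Y\tensordot K$ using the isomorphism of part (a); with that definition the required compatibility square is a direct unwinding of the triangle identities, and your verification goes through.
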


\begin{proof}
This is an exercise left to the reader; compare \cite[Proof of 3.16]{Ching_Harper_derived_Koszul_duality}.
\end{proof}

\section{Background on the homotopy theory of $\Kt$-coalgebras}
\label{sec:homotopy_theory_K_coalgebras}

In this section we recall the homotopy theory of $\Kt$-coalgebras developed in Arone-Ching \cite{Arone_Ching_classification}; in fact, we use a tiny modification exploited in Ching-Harper \cite{Ching_Harper} and Cohn \cite{Cohn}. The expert already familiar with  \cite{Arone_Ching_classification} may wish to skip this background section.
 
A morphism of $\Kt$-coalgebra spectra from $Y$ to $Y'$ is a map $\function{f}{Y}{Y'}$ in $\Mod_S$ that makes the diagram
\begin{align}
\label{eq:commutative_square_defining_K_coalgebra_map}
\xymatrix{
  Y\ar[d]_-{f}\ar[r]^-{m} & \Kt Y\ar[d]^-{\id f}\\
  Y'\ar[r]_-{m} & \Kt Y'
}
\end{align}
in $\Mod_S$ commute. This motivates the following homotopically meaningful cosimplicial resolution of $\Kt$-coalgebra maps.

\begin{defn}
\label{defn:derived_K_coalgebra_maps}
Let $Y,Y'$ be $\Kt$-coalgebra spectra. The diagram $\Hombold\bigl(Y,F\Kt^\bullet Y'\bigr)$ in $(\sSet)^{\Delta}$ looks like
\begin{align*}
\xymatrix{
  \Hombold(Y,FY')\ar@<0.5ex>[r]^-{d^0}\ar@<-0.5ex>[r]_-{d^1} &
  \Hombold\bigl(Y,F\Kt Y'\bigr)
  \ar@<1.0ex>[r]\ar[r]\ar@<-1.0ex>[r] &
  \Hombold\bigl(Y,F\Kt\Kt Y'\bigr)\cdots
}
\end{align*}
and is defined objectwise by 
\begin{align*}
  \Hombold\bigl(Y,F\Kt^\bullet Y'\bigr)^n:=
  \Hombold\bigl(Y,F\Kt^n Y'\bigr)=
  \Hombold\bigl(Y,F(\K F)^n Y'\bigr)
\end{align*}
 with the obvious coface and codegeneracy maps.
\end{defn}

\begin{rem}
This is simply the resolution in Arone-Ching \cite{Arone_Ching_classification}, but ``fattened-up'' by $F$. For instance, on the level of hom-sets (simplicial degree 0), let's verify that $s^0d^1=
\id$ on $\Hombold(Y,FY')$. Start with $\function{f}{Y}{FY'}$ and consider the commutative diagram
\begin{align*}
\xymatrix{
  Y\ar[r]^-{f}\ar@{=}[dd] & 
  FY'\ar[r]^-{\id m}\ar@{=}[dd] & 
  F\K FY'\ar[d]^-{\id\varepsilon\id^2}\ar@/_1.5pc/[dd]_-{(*)}\\
  & 
  & 
  FFY'\ar[d]^-{m\id}\\
  Y\ar[r]^-{f} & 
  FY'\ar@{=}[r] & 
  FY'
}
\end{align*}
The composite along the upper horizontal and right-hand vertical maps is $s^0d^1f$ and the composite along the bottom horizontal maps is $f$; the diagram commutes verifies that $s^0d^1=\id$. Similarly, on the level of hom-sets (simplicial degree 0), let's verify that $s^0d^0=
\id$ on $\Hombold(Y,FY')$. Start with $\function{f}{Y}{FY'}$ and consider the commutative diagram
\begin{align*}
\xymatrix{
  Y\ar[r]^-{m}\ar[d]^-{\eta\id} & 
  \K FY\ar[r]^-{\id^2 f}\ar[d]^-{\eta\id^3} & 
  \K FFY'\ar[r]^-{\id m\id}\ar[d]^-{\eta\id^4} & 
  \K FY'\ar[r]^-{\eta\id^3}\ar[d]^-{\eta\id^3} & 
  F\K FY'\ar@{=}[d]\\
  FY\ar[r]^-{\id m}\ar@{=}[dd] & 
  F\K FY\ar[r]^-{\id^3 f}\ar[d]^-{\id\varepsilon\id^2}\ar@/_1.5pc/[dd]_-{(*)} & 
  F\K FFY'\ar[r]^-{\id^2 m\id}\ar[d]^-{\id\varepsilon\id^3} & 
  F\K FY'\ar@{=}[r]\ar[d]^-{\id\varepsilon\id^2} & 
  F\K FY'\ar[d]^-{\id\varepsilon\id^2}\\
  & 
  FFY\ar[r]^-{\id^2 f}\ar[d]^-{m\id} & 
  FFFY'\ar[r]^-{\id m\id}\ar[d]^-{m\id^2} & 
  FFY'\ar@{=}[r]\ar[d]^-{m\id}& 
  FFY'\ar[d]^-{m\id}\\
  FY\ar@{=}[r]  & 
  FY\ar[r]^-{\id f} & 
  FFY'\ar[r]^-{m\id} & 
  FY'\ar@{=}[r] & 
  FY'\\
  Y\ar[u]_-{\eta\id} \ar[rr]^-{f}
  &  
  & 
  FY'\ar[u]_-{\eta\id^2}\ar@{=}[r] & 
  FY'\ar@{=}[u]
}
\end{align*}
The composite along the upper horizontal and right-hand vertical maps is $s^0d^0f$ and the composite along the bottom horizontal maps is $f$; the diagram commutes verifies that $s^0d^0=\id$.
\end{rem}

\begin{defn}
\label{defn:realization_sSet}
The \emph{realization} functor $\function{|-|}{\sSet}{\CGHaus}$ for simplicial sets is defined objectwise by the coend $X\mapsto X \times_{\Delta}\Delta^{(-)}$; here, $\Delta^n$ in $\CGHaus$ denotes the topological standard $n$-simplex for each $n\geq 0$ (see \cite[I.1.1]{Goerss_Jardine}).
\end{defn}

\begin{defn}
Let $X,Y$ be pointed spaces. The mapping space functor $\Map$ is defined objectwise by realization
$
  \Map(X,Y)
  :=|\Hombold(X,Y)|
$
of the indicated simplicial set.
\end{defn}

\begin{defn}
Let $Y,Y'$ be $\Kt$-coalgebra spectra. The \emph{mapping spaces} of derived $\Kt$-coalgebra maps $\Hombold_{\coAlgKt}(Y,Y')$ in $\sSet$ and $\Map_{\coAlgKt}(Y,Y')$ in $\CGHaus$ are defined by the restricted totalizations
\begin{align*}
  \Hombold_{\coAlgKt}(Y,Y')
  &:=\Tot^\res\Hombold\bigl(Y,F\Kt^\bullet Y'\bigr)\\
  \Map_{\coAlgKt}(Y,Y')
  &:=\Tot^\res\Map\bigl(Y,F\Kt^\bullet Y'\bigr)
\end{align*}
of the indicated objects.
\end{defn}

\begin{rem}
\label{rem:intuition_behind_resolution}
Note that there are natural zigzags of weak equivalences
\begin{align*}
  \Hombold_{\coAlgKt}(Y,Y')
  \wequiv\holim_{\Delta}\Hombold\bigl(Y,F\Kt^\bullet Y'\bigr)
\end{align*}
\end{rem}

\begin{defn}
\label{defn:derived_K_coalgebra_map}
Let $Y,Y'$ be $\Kt$-coalgebra spectra. A \emph{derived $\Kt$-coalgebra map} $f$ of the form $Y\rarrow Y'$ is any map in $(\sSet)^{\Delta_\res}$ of the form
\begin{align*}
  \functionlong{f}{\Delta[-]}{\Hombold\bigl(Y,F\Kt^\bullet Y'\bigr)}.
\end{align*}
A \emph{topological derived $\Kt$-coalgebra map} $g$ of the form $Y\rarrow Y'$ is any map in $(\CGHaus)^{\Delta_\res}$ of the form
\begin{align*}
  \functionlong{g}{\Delta^\bullet}{\Map\bigl(Y,F\Kt^\bullet Y'\bigr)}.
\end{align*}
The \emph{underlying map} of a derived $\Kt$-coalgebra map $f$ is the map $\function{f_0}{Y}{FY'}$ that corresponds to the map $\function{f_0}{\Delta[0]}{\Hombold(Y,FY')}$. Every derived $\Kt$-coalgebra map $f$ determines a topological derived $\Kt$-coalgebra map $|f|$ by realization.
\end{defn}

\begin{defn}
The \emph{homotopy category} of $\Kt$-coalgebra spectra (compare, \cite[1.15]{Arone_Ching_classification}), denoted $\Ho(\coAlgKt)$, is the category with objects the $\Kt$-coalgebras and morphism sets $[X,Y]_\Kt$ from $X$ to $Y$ the path components
\begin{align*}
  [X,Y]_\Kt := \pi_0\Map_\coAlgKt(X,Y)
\end{align*}
of the indicated mapping spaces.
\end{defn}

\begin{defn}
\label{defn:weak_equivalence_of_K_coalgebras}
A derived $\Kt$-coalgebra map $f$ of the form $Y\rarrow Y'$ is a \emph{weak equivalence} if the underlying map $\function{f_0}{Y}{FY'}$ is a weak equivalence.
\end{defn}

\begin{prop}
\label{prop:weak_equivalence_if_and_only_if_iso_in_homotopy_category}
Let $Y,Y'$ be $\Kt$-coalgebra spectra. A derived $\Kt$-coalgebra map $f$ of the form $Y\rarrow Y'$ is a weak equivalence if and only if it represents an isomorphism in the homotopy category of $\Kt$-coalgebras.
\end{prop}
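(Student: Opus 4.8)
The plan is to prove the two implications separately, in each case reducing to a homotopy-invariance property of the mapping spaces $\Map_\coAlgKt(-,-)$ together with the observation that isomorphisms in $\Ho(\coAlgKt)$ are detected by corepresentable functors. Throughout I would use that every object of $\Mod_S$ is cofibrant in the injective stable model structure, that $F\Kt^n(-)$ is objectwise fibrant by construction, and that the derived functor $\Kt=\K F$ preserves weak equivalences; I would also use the identification $\Map_\coAlgKt(Y,Y')\wequiv\holim_\Delta\Map(Y,F\Kt^\bullet Y')$ of Remark \ref{rem:intuition_behind_resolution}, so that the restricted totalization is a genuine homotopy limit.

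First I would establish the key homotopy-invariance lemma: if $h\colon\thinspace A\rarrow B$ is a derived $\Kt$-coalgebra map whose underlying map $h_0\colon\thinspace A\rarrow FB$ is a weak equivalence, then for every $\Kt$-coalgebra spectrum $Z$ the induced map $h_*\colon\thinspace\Map_\coAlgKt(Z,A)\rarrow\Map_\coAlgKt(Z,B)$ is a weak equivalence. The reason is that $h_0$ induces objectwise weak equivalences $\Map(Z,F\Kt^n A)\xrightarrow{\wequiv}\Map(Z,F\Kt^n B)$ for each $n$ (here $Z$ is cofibrant, both targets are fibrant, and $\Kt^n$ preserves weak equivalences), and a homotopy limit carries an objectwise weak equivalence of objectwise fibrant cosimplicial spaces to a weak equivalence.

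For the forward direction I would apply the lemma with $h=f$. Then $f_*\colon\thinspace\Map_\coAlgKt(Z,Y)\rarrow\Map_\coAlgKt(Z,Y')$ is a weak equivalence for every $Z$, hence $f_*\colon\thinspace[Z,Y]_\Kt\rarrow[Z,Y']_\Kt$ is a bijection; this is natural in $Z$ by associativity of composition in $\Ho(\coAlgKt)$. Thus postcomposition with $[f]$ is a natural isomorphism of the corepresented functors $[-,Y]_\Kt\Rightarrow[-,Y']_\Kt$, and the Yoneda lemma in $\Ho(\coAlgKt)$ forces $[f]$ to be an isomorphism.

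For the reverse direction I would use that the assignment $Y\mapsto Y$, $[f]\mapsto[f_0]$ (with $FY'\iso Y'$ in $\Ho(\Mod_S)$) defines a functor $U\colon\thinspace\Ho(\coAlgKt)\rarrow\Ho(\Mod_S)$: the underlying map of the identity derived coalgebra map is the weak equivalence $\function{\eta}{Y}{FY}$, and tracing through the Arone-Ching mapping-space construction shows that passing to underlying maps via simplicial-degree-$0$ evaluation is compatible with composition. Granting this, if $[f]$ is an isomorphism then $U[f]=[f_0]$ is an isomorphism in $\Ho(\Mod_S)$, and since an isomorphism in the homotopy category of a model category is exactly a weak equivalence, $f_0$ is a weak equivalence, i.e.\ $f$ is a weak equivalence of $\Kt$-coalgebra spectra. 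The main obstacle is precisely this functoriality of $U$: composition in $\Ho(\coAlgKt)$ is induced by the highly homotopy coherent comonad structure on $\Kt$ (Blumberg-Riehl, Cohn) rather than by a strict composition law, so one must verify carefully that it is respected by passage to underlying maps; the homotopy-invariance lemma and the Yoneda step are routine once this compatibility is in hand.
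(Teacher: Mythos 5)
The paper states this proposition without proof (it is quoted from the Arone--Ching/Ching--Harper/Cohn framework recalled in this background section), so there is no in-paper argument to compare against; I can only assess your proposal on its own terms.

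Your reverse direction is essentially right, and you correctly flag its one real obstacle (functoriality of passage to underlying maps with respect to the Arone--Ching composition). But the forward direction contains a genuine gap in the key lemma: the maps $\Hombold(Z,F\Kt^n A)\rarrow\Hombold(Z,F\Kt^n B)$ that you induce from the underlying map $h_0\colon\thinspace A\rarrow FB$ alone do \emph{not} assemble into a map of cosimplicial objects. The inner cofaces (comultiplication of $\Kt$) and $d^0$ (coaction of $Z$) do commute with these maps by naturality, but the top coface $d^{n+1}$ is induced by the coaction $m_A\colon\thinspace A\rarrow\Kt A$ versus $m_B\colon\thinspace B\rarrow\Kt B$, and the relevant square commutes precisely when $h_0$ strictly intertwines the coactions --- i.e., when diagram \eqref{eq:commutative_square_defining_K_coalgebra_map} holds on the nose. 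For a mere derived $\Kt$-coalgebra map that square commutes only up to the homotopies encoded by the higher simplices of $h$, so the strict cosimplicial map $h_*$ you invoke does not exist, and the ``objectwise weak equivalence of fibrant cosimplicial spaces'' argument cannot be applied as stated. The same coherence issue also undercuts the Yoneda step, since you must identify $h_*$ with composition by $[h]$ in $\Ho(\coAlgKt)$, which again requires engaging with how that composition is defined (via the $\Tot$-tower/coherent structure of the cosimplicial mapping objects), exactly the point you deferred. A correct proof of the forward direction has to work with that composition directly --- for instance by a $\Tot$-tower or obstruction argument producing a homotopy inverse, or by proving homotopy invariance of $\Tot^\res\Hombold(Z,F\Kt^\bullet(-))$ at the level of derived maps --- rather than with a levelwise-induced map.
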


\section{Background on the derived fundamental adjunction}
\label{sec:derived_fundamental_adjunction}

Here we review the stabilization version of the derived fundamental adjunction; see also Arone-Ching \cite{Arone_Ching_classification} (which is based on ideas described in Hess \cite{Hess}), Ching-Harper \cite{Ching_Harper_derived_Koszul_duality}, and Cohn \cite{Cohn}. 

The derived unit is the map of pointed spaces of the form $X\rarrow\holim_\Delta \mathfrak{C}(\Susp^\infty X)$ corresponding to the identity map $\function{\id}{\Susp^\infty X}{\Susp^\infty X}$; it is tautologically the $\Loopt^\infty\Susp^\infty$-completion map $X\rarrow X^\wedge_{\Loopt^\infty\Susp^\infty}$ studied in Carlsson in \cite{Carlsson_equivariant} (see also Arone-Kankaanrinta \cite{Arone_Kankaanrinta}).

\begin{defn}
\label{defn:derived_counit_map}
The \emph{derived counit map} associated to the fundamental adjunction \eqref{eq:fundamental_adjunction_comparing_spaces_with_coAlgK} is the derived $\Kt$-coalgebra map of the form $\Susp^\infty\holim_\Delta \mathfrak{C}(Y)\rightarrow Y$ with underlying map
\begin{align}
\label{eq:derived_counit_map_of_the_form}
  \Susp^\infty\Tot^\res\mathfrak{C}(Y)\longrightarrow FY
\end{align}
corresponding to the identity map
\begin{align}
\label{eq:derived_identity_map}
  \function{\id}{\Tot^\res\mathfrak{C}(Y)}{\Tot^\res\mathfrak{C}(Y)}
\end{align}
in $\Space_*$, via the adjunctions \cite[5.4]{Blomquist_Harper_integral_chains} and $(\Susp^\infty,\Loop^\infty)$. In more detail, the derived counit map is the derived $\Kt$-coalgebra map defined by the composite
\begin{align}
\label{eq:derived_counit_map}
  \Delta[-]\xrightarrow{(*)}
  &\Hombold\bigl(\Tot^\res\mathfrak{C}(Y),\mathfrak{C}(Y)\bigr)\\
  \notag
  \Iso
  &\Hombold\bigl(\Susp^\infty\Tot^\res\mathfrak{C}(Y),F\Kt^\bullet Y\bigr)
\end{align}
in $(\sSet)^{\Delta_\res}$, where $(*)$ corresponds to the map \eqref{eq:derived_identity_map} in $\Space_*$.
\end{defn}

\begin{prop}
\label{prop:induced_map_on_mapping_spaces_built_from_stabilization}
Let $X,X'$ be pointed spaces.
There are natural morphisms of mapping spaces of the form
\begin{align*}
  \Susp^\infty\colon\thinspace\Hombold(X,X')
  \rarrow&\Hombold_\coAlgKt(\Susp^\infty X,\Susp^\infty X'),\\
  \Susp^\infty\colon\thinspace\Map(X,X')
  \rarrow&\Map_\coAlgKt(\Susp^\infty X,\Susp^\infty X'),
\end{align*}
in $\sSet$ and $\CGHaus$, respectively.
\end{prop}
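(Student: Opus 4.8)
The plan is to construct the map of simplicial mapping spaces first, and then obtain the topological version by applying realization levelwise.

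First I would invoke that $\Susp^\infty$ is a simplicial functor (Proposition \ref{prop:useful_properties_of_the_adjunction}(e)), which produces a natural map of simplicial sets
\[
  \Susp^\infty\colon\thinspace\Hombold(X,X')\rarrow\Hombold(\Susp^\infty X,\Susp^\infty X')
\]
in $\sSet$. The crucial point is that this map lands in $\K$-coalgebra maps. Since the $\K$-coaction $m$ on $\Susp^\infty(-)$ is $\Susp^\infty$ applied to the unit $\eta$, naturality of $\eta$ gives the strict coalgebra-map identity $m\circ\Susp^\infty h=\K(\Susp^\infty h)\circ m$ for every map $\function{h}{X}{X'}$. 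Because the unit $\eta$, counit $\varepsilon$, and the (co)multiplications are \emph{simplicial} natural transformations (Proposition \ref{prop:unit_and_counit_are_simplicial}), this compatibility is realized as a commuting diagram of simplicial mapping spaces, so the identity holds not merely on vertices but on the entire simplicial set $\Hombold(X,X')$ together with all of its higher simplices.

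Second, I would use this compatibility to lift $\Susp^\infty$ into the cosimplicial cobar resolution. Post-composing with the fibrant-replacement unit $\function{\eta}{\id}{F}$ and inserting the coaction and comultiplication in the standard cobar pattern (Definition \ref{defn:derived_K_coalgebra_maps}) assembles, level by level, a map of cosimplicial simplicial sets
\[
  \underline{\Hombold(X,X')}\rarrow\Hombold\bigl(\Susp^\infty X,F\Kt^\bullet\Susp^\infty X'\bigr)
\]
over $\Delta_\res$ out of the constant cosimplicial object on the source; the requirement that this map respect each coface (and codegeneracy) map is exactly the strict coalgebra-map identity from the previous paragraph, fattened functorially by $F$ via $\eta$. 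By the universal property of restricted totalization, precomposing such a cosimplicial map out of a constant object with the projections $\Hombold(X,X')\tensordot\Delta[\bullet]\rarrow\underline{\Hombold(X,X')}$ then produces the desired natural morphism
\[
  \Susp^\infty\colon\thinspace\Hombold(X,X')\rarrow\Tot^\res\Hombold\bigl(\Susp^\infty X,F\Kt^\bullet\Susp^\infty X'\bigr)=\Hombold_\coAlgKt(\Susp^\infty X,\Susp^\infty X')
\]
in $\sSet$.

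Finally, for the topological statement I would apply the realization functor $|-|$ (Definition \ref{defn:realization_sSet}). Since realization is a functor preserving finite products and constant diagrams, it carries the cosimplicial map of the previous step to a cosimplicial map $\underline{\Map(X,X')}\rarrow\Map(\Susp^\infty X,F\Kt^\bullet\Susp^\infty X')$ in $\CGHaus$; applying $\Tot^\res$ in the same way then gives $\Map(X,X')\rarrow\Map_\coAlgKt(\Susp^\infty X,\Susp^\infty X')$ (this route realizes before totalizing and so avoids the question of whether $|-|$ commutes with $\Tot^\res$). Naturality in $X$ and $X'$ is inherited from the naturality of $\Susp^\infty$, $\eta$, and the cobar construction. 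The main obstacle I anticipate is the bookkeeping in the second step: verifying that the strict $\K$-coalgebra identity, after fattening by $F$, genuinely yields a map respecting \emph{all} of the (restricted) cosimplicial structure maps of $\Hombold(\Susp^\infty X,F\Kt^\bullet\Susp^\infty X')$---rather than merely equalizing $d^0$ and $d^1$---which is precisely the content being supplied by Proposition \ref{prop:unit_and_counit_are_simplicial}.
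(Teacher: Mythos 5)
Your construction is correct, and it is the standard one: the paper itself states this proposition without proof (it sits in a background section deferring to Arone--Ching and Ching--Harper), and your argument -- use that $\Susp^\infty$ is a simplicial functor, observe via simplicial naturality of the (co)unit and comultiplication that $\Susp^\infty h$ is a strict $\K$-coalgebra map compatibly with all higher simplices, assemble the resulting levelwise maps into a map of cosimplicial objects out of the constant one, and pass to $\Tot^\res$ -- is exactly what is implicitly intended. Your route to the topological statement (realize levelwise, then totalize) also matches the paper's definition of $\Map_\coAlgKt$ on the nose, so no commutation of $|-|$ with $\Tot^\res$ is needed; the only content you leave to bookkeeping, namely that the $F$-fattened coalgebra identity respects all cofaces and codegeneracies, is precisely the kind of diagram chase the paper carries out in the remark following Definition \ref{defn:derived_K_coalgebra_maps}.
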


\begin{prop}
There is an induced functor
\begin{align*}
  \function{\Susp^\infty}{\Ho(\Space_*)}{\Ho(\coAlgKt)}
\end{align*}
which on objects is the map $X\mapsto \Susp^\infty X$ and on morphisms is the map
\begin{align*}
  [X,X']\rarrow [\Susp^\infty X,\Susp^\infty X']_\Kt
\end{align*}
which sends $[f]$ to $[\Susp^\infty f]$.
\end{prop}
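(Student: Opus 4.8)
The plan is to construct the functor by applying the path-component functor $\pi_0$ to the natural transformation of mapping spaces supplied by Proposition~\ref{prop:induced_map_on_mapping_spaces_built_from_stabilization}. On objects I would send $X$ to the suspension spectrum $\Susp^\infty X$, equipped with its canonical $\Kt$-coalgebra structure arising from the left $\Kt$-coaction $\function{m}{\Susp^\infty X}{\Kt\Susp^\infty X}$ described in the Remark above; this is precisely the object assignment asserted. On morphisms I would take $\pi_0$ of the natural map
\begin{align*}
  \function{\Susp^\infty}{\Map(X,X')}{\Map_\coAlgKt(\Susp^\infty X,\Susp^\infty X')}
\end{align*}
of Proposition~\ref{prop:induced_map_on_mapping_spaces_built_from_stabilization}. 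Since $[X,X']=\pi_0\Map(X,X')$ and $[\Susp^\infty X,\Susp^\infty X']_\Kt=\pi_0\Map_\coAlgKt(\Susp^\infty X,\Susp^\infty X')$ by definition of the morphism sets in the two homotopy categories, this yields the desired map $[X,X']\rarrow[\Susp^\infty X,\Susp^\infty X']_\Kt$ sending $[f]$ to $[\Susp^\infty f]$; well-definedness on homotopy (path-component) classes is automatic, since the assignment is induced by an honest map of spaces.

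It then remains to verify that this assignment is functorial, i.e.\ that it preserves identities and composition. Preservation of identities is tautological: $\Susp^\infty(\id_X)=\id_{\Susp^\infty X}$ on the point-set level, and this maps to the class of the identity derived $\Kt$-coalgebra map. For composition I would use that $\Susp^\infty$ is a functor on the point-set level, so that $\Susp^\infty(g\circ f)=\Susp^\infty(g)\circ\Susp^\infty(f)$ as maps of $S$-modules, and that each $\Susp^\infty f$ is a strict $\K$-coalgebra map by naturality of the coaction $m$; the induced derived $\Kt$-coalgebra maps then compose correctly provided the maps $\Susp^\infty$ on mapping spaces are compatible with the composition pairings defining composition in $\Ho(\Space_*)$ and in $\Ho(\coAlgKt)$.

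Establishing this last compatibility is the step I expect to be the main obstacle, since composition in $\Ho(\coAlgKt)$ is not naive composition of maps but is defined through the cobar resolutions $\Hombold(Y,F\Kt^\bullet Y')$ in the Arone-Ching framework. To handle it I would trace the composition pairing through the explicit description of $\Map_\coAlgKt$ as a restricted totalization $\Tot^\res\Map(Y,F\Kt^\bullet Y')$ and check that $\Susp^\infty$ intertwines the two pairings. This reduces to the simplicial functoriality of $\Susp^\infty$ (Proposition~\ref{prop:useful_properties_of_the_adjunction}(e)) together with the simplicial naturality of the unit, counit, and (co)multiplication of the $(\Susp^\infty,\Loop^\infty)$-adjunction (Proposition~\ref{prop:unit_and_counit_are_simplicial}); these are exactly the structural inputs guaranteeing that the maps of mapping spaces assemble into a map respecting the cosimplicial structures, and hence the induced composition. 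With that compatibility in hand, functoriality follows formally and the functor $\function{\Susp^\infty}{\Ho(\Space_*)}{\Ho(\coAlgKt)}$ is well-defined.
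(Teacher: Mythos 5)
The paper offers no proof of this proposition, evidently regarding it as a formal consequence of the preceding statement on induced maps of mapping spaces, and your argument supplies exactly the intended details: apply $\pi_0$ to the natural maps of Proposition \ref{prop:induced_map_on_mapping_spaces_built_from_stabilization}, note that the morphism sets in both homotopy categories are path components of the relevant mapping spaces, and check functoriality using that $\Susp^\infty f$ is a strict coalgebra map by naturality of the coaction together with the simplicial (co)monad structure of Propositions \ref{prop:useful_properties_of_the_adjunction} and \ref{prop:unit_and_counit_are_simplicial}. Your identification of compatibility with the Arone--Ching composition pairing as the one nontrivial point, and its reduction to simplicial naturality, is the right way to close the argument, so the proposal is correct and in line with the paper's (implicit) approach.
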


\begin{prop}
\label{prop:cosimplicial_resolutions_of_K_coalgebras_respect_adjunction_isos}
Let $X\in\Space_*$ and $Y\in\coAlgKt$. The adjunction isomorphisms associated to the $(\Susp^\infty,\Loop^\infty)$ adjunction induce well-defined isomorphisms
\begin{align*}
  \Hombold\bigl(\Susp^\infty X,F\Kt^\bullet Y\bigr)&\xrightarrow{\Iso}
  \Hombold\bigl(X,\Loopt^\infty\Kt^\bullet Y\bigr)
\end{align*}
of cosimplicial objects in $\sSet$, natural in $X,Y$.
\end{prop}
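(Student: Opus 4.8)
The plan is to construct the claimed isomorphism levelwise from the simplicial adjunction isomorphism of Proposition \ref{prop:useful_properties_of_the_adjunction}(b), and then to verify that these levelwise isomorphisms are compatible with the coface and codegeneracy maps, so that they assemble into an isomorphism of cosimplicial objects. First I would record the objectwise statement: for each $n\geq 0$, applying Proposition \ref{prop:useful_properties_of_the_adjunction}(b) to the $S$-module $F\Kt^n Y$ produces a natural isomorphism
\[
  \Phi_n\colon\thinspace\Hombold\bigl(\Susp^\infty X, F\Kt^n Y\bigr)\xrightarrow{\Iso}\Hombold\bigl(X,\Loop^\infty F\Kt^n Y\bigr)=\Hombold\bigl(X,\Loopt^\infty\Kt^n Y\bigr)
\]
in $\sSet$, using that $\Loopt^\infty=\Loop^\infty F$ and hence $\Loopt^\infty\Kt^n Y=\Loop^\infty F\Kt^n Y$. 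This already identifies the two cosimplicial objects levelwise; the entire content of the proposition is therefore that the $\Phi_n$ commute with the cosimplicial structure maps.

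The key observation I would exploit is that both sides are obtained by applying a $\Hombold$ functor to the \emph{single} cosimplicial $S$-module $F\Kt^\bullet Y$: the left-hand cosimplicial object is $\Hombold(\Susp^\infty X,-)$ applied to $F\Kt^\bullet Y$, whereas the right-hand cosimplicial object is $\Hombold(X,\Loop^\infty(-))$ applied to $F\Kt^\bullet Y$, since $\Loopt^\infty\Kt^\bullet Y=\Loop^\infty(F\Kt^\bullet Y)$ as cosimplicial pointed spaces with $\Loop^\infty$ applied levelwise. Consequently every coface map $d^i$ and codegeneracy map $s^i$ on both sides is induced, by post-composition, from the corresponding structure map of $F\Kt^\bullet Y$, which is a morphism of $S$-modules. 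The isomorphism $\Phi$ of Proposition \ref{prop:useful_properties_of_the_adjunction}(b) is natural in the $S$-module variable, so for any coface or codegeneracy map $g$ of $F\Kt^\bullet Y$ the naturality square relating $g_*$ on the left to $(\Loop^\infty g)_*$ on the right commutes; applying this to each $d^i$ and $s^i$ shows that $\{\Phi_n\}_n$ is a morphism of cosimplicial objects. Since each $\Phi_n$ is an isomorphism, this morphism is an isomorphism of cosimplicial objects, and naturality in $X$ and $Y$ is inherited directly from the naturality of $\Phi$.

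The main obstacle is the verification, used implicitly above, that the cosimplicial structure on $\Loopt^\infty\Kt^\bullet Y=\mathfrak{C}(Y)$ really is $\Loop^\infty$ applied to the cosimplicial structure maps of the $S$-module $F\Kt^\bullet Y$; equivalently, that the coface and codegeneracy maps of the cobar construction of Definition \ref{defn:cobar_construction_fattened} are the images under $\Loop^\infty$ of the underlying spectrum-level structure maps. This reduces to checking that the right $\Kt$-coaction $\function{m}{\Loopt^\infty}{\Loopt^\infty\Kt}$ appearing in $d^0$, and the counit-type map appearing in the codegeneracies, are obtained by applying $\Loop^\infty$ to the corresponding natural transformations at the level of $S$-modules. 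This follows from the fact that the unit, counit, multiplication, and comultiplication are simplicial natural transformations (Proposition \ref{prop:unit_and_counit_are_simplicial}), together with $\Loop^\infty$ being a simplicial functor (Proposition \ref{prop:useful_properties_of_the_adjunction}(e)). Once this identification of structure maps is in hand, the naturality argument above applies verbatim and completes the proof.
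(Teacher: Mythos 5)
The paper states this proposition without proof (it sits in a background section deferring to Arone--Ching, Ching--Harper, and Cohn), so there is no argument of the paper's to compare against; judged on its own terms, your proposal has a genuine gap at the coface map $d^0$. Your key observation --- that both sides are obtained by applying a hom-functor to the single cosimplicial $S$-module $F\Kt^\bullet Y$, with all structure maps given by post-composition --- is false. First, $F\Kt^\bullet Y$ is not a cosimplicial $S$-module at all: a zeroth coface would require a natural transformation $F\rarrow F\Kt=F\K F$, i.e.\ essentially a natural map $\id\rarrow\K=\Susp^\infty\Loop^\infty$ of endofunctors of $\Mod_S$, which does not exist. Second, and more to the point, the $d^0$ of the left-hand object $\Hombold(\Susp^\infty X,F\Kt^\bullet Y)$ is not post-composition: as the remark following Definition \ref{defn:derived_K_coalgebra_maps} makes explicit, $d^0(f)$ is built by \emph{pre}-composing with the $\Kt$-coaction $\function{m}{\Susp^\infty X}{\Kt\Susp^\infty X}$ of the source and applying $\Kt$ to $f$. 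Likewise, on the right-hand side the $d^0$ of $\mathfrak{C}(Y)=\Loopt^\infty\Kt^\bullet Y$ is induced by the coaction $\function{m}{\Loopt^\infty}{\Loopt^\infty\Kt}$, which is built from the unit $\function{\eta}{\id}{\Loop^\infty\Susp^\infty}$ on the spaces side and is not of the form $\Loop^\infty(g)$ for any spectrum-level map $g$. So your naturality square argument, which is correct and does dispose of the cofaces $d^i$ for $i\geq 1$ and all codegeneracies (these really are post-composition with maps built from the comultiplication of $\Kt$, the coaction of $Y$, and the counit, all living in $\Mod_S$), simply does not apply to $d^0$ on either side.

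The compatibility of the $\Phi_n$ with $d^0$ is precisely the nontrivial content of the proposition, and it is not a formal consequence of naturality in the target variable; it requires the adjunction identities. Concretely, one must check that under the bijection $f\mapsto f^\flat=\Loop^\infty(f)\circ\eta_X$ (suitably fattened by $F$), the map $d^0(f)$ --- pre-composition with the coaction $m=\id\eta\id$ on $\Susp^\infty X$ followed by $\Kt(f)$ and the structural adjustments --- corresponds to post-composition of $f^\flat$ with $m\id$ on $\mathfrak{C}(Y)$. This is the standard computation combining naturality of $\eta$ with the triangle identity for $(\Susp^\infty,\Loop^\infty)$, together with the observation that the coaction on $\Susp^\infty X$ is itself defined via $\eta$; Propositions \ref{prop:useful_properties_of_the_adjunction} and \ref{prop:unit_and_counit_are_simplicial} guarantee this computation is valid simplicially and not just on hom-sets. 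Your proof should be repaired by treating $d^0$ separately along these lines rather than folding it into the post-composition argument.
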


\begin{prop}
\label{prop:zigzag_of_weak_equivalences_tot_and_stabilization_completion}
If $X$ is a pointed space, then there is a zigzag of weak equivalences
\begin{align*}
  X^\wedge_{\Loopt^\infty\Susp^\infty}\wequiv
  \holim\nolimits_\Delta \mathfrak{C}(\Susp^\infty X)\wequiv
  \Tot^\res\mathfrak{C}(\Susp^\infty X)
\end{align*}
in $\Space_*$, natural with respect to all such $X$.
\end{prop}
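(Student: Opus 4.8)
The plan is to read the middle term $\holim_\Delta\mathfrak{C}(\Susp^\infty X)$ as a pivot and to establish the two displayed weak equivalences separately. The starting point is the identification recorded just after Definition~\ref{defn:cobar_construction_fattened}: the cosimplicial resolution of $X$ obtained by iterating the spaces-level stabilization map \eqref{eq:hurewicz_map_spaces_level_introduction} is naturally isomorphic, as a cosimplicial pointed space, to $\mathfrak{C}(\Susp^\infty X)$. Concretely, unwinding $\Kt=\K F$ and $\Loopt^\infty=\Loop^\infty F$ gives $\mathfrak{C}(\Susp^\infty X)^n=\Loopt^\infty\Kt^n\Susp^\infty X\iso(\Loopt^\infty\Susp^\infty)^{n+1}X$, which is precisely the $n$-th term of the triple resolution whose homotopy limit defines the $\Loopt^\infty\Susp^\infty$-completion $X^\wedge_{\Loopt^\infty\Susp^\infty}$ of Carlsson \cite{Carlsson_equivariant} and Arone-Kankaanrinta \cite{Arone_Kankaanrinta}. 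Thus both outer objects of the asserted zigzag are built from the single cosimplicial space $\mathfrak{C}(\Susp^\infty X)$, and the problem reduces to comparing models for its homotopy limit.

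First I would verify that $\mathfrak{C}(\Susp^\infty X)$ is objectwise fibrant in $\Space_*$. Its $n$-th term is $\Loop^\infty F(\K F)^n\Susp^\infty X$; since $F$ is the fibrant replacement monad on $\Mod_S$ and fibrant $S$-modules are $\Omega$-spectra that are objectwise Kan complexes, the functor $\Loop^\infty=\Ev_0$ carries a fibrant $S$-module to a Kan complex. This objectwise fibrancy is exactly the hypothesis required to invoke the homotopy-limit comparisons recalled in this section.

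With objectwise fibrancy in hand, the left-hand equivalence follows from the fact that the homotopy limit defining $X^\wedge_{\Loopt^\infty\Susp^\infty}$ agrees, under the cosimplicial isomorphism above and the standard comparison of homotopy-limit models for objectwise fibrant cosimplicial spaces (Bousfield-Kan \cite{Bousfield_Kan}), with $\holim_\Delta\mathfrak{C}(\Susp^\infty X)$. For the right-hand equivalence I would invoke the restricted-totalization comparison: for an objectwise fibrant cosimplicial space $Z$ the natural map $\holim_\Delta Z\rarrow\Tot^\res Z$ given by restriction along $\Delta_\res\subset\Delta$ is a weak equivalence. This is assembled from the cofinality machinery of Propositions~\ref{prop:left_cofinality_truncated_delta} and \ref{prop:punctured_cube_calculation_of_holim_truncated_delta} together with the adjunction \cite[5.4]{Blomquist_Harper_integral_chains} and \cite{Ching_Harper_derived_Koszul_duality}: one passes through the truncations $\holim_{\Delta^{\leq n}}$ and identifies each level with a homotopy limit over the punctured cube $\capP_0([n])$, where the restricted and unrestricted indexing agree. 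Naturality in $X$ is automatic, since $\Susp^\infty$, $F$, the cobar construction $\mathfrak{C}$, and all of the comparison maps are functorial.

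The hard part will be the second comparison $\holim_\Delta\wequiv\Tot^\res$: one must ensure that restricting the indexing category from $\Delta$ down to the subcategory $\Delta_\res$ of coface maps does not change the homotopy limit. This is exactly where objectwise fibrancy is indispensable and where the left cofinality of $\capP_0([n])\rarrow\Delta^{\leq n}$ (Proposition~\ref{prop:left_cofinality_truncated_delta}) does the real work. By contrast, the left-hand equivalence is essentially bookkeeping: once the cosimplicial identification with the triple resolution is in place, it is tautological up to the choice of homotopy-limit model.
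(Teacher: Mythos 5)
Your argument is correct and is essentially the one the paper intends: the paper states this proposition without an explicit proof, but the ingredients it supplies for exactly this purpose---the natural isomorphism between the iterated $\Loopt^\infty\Susp^\infty$-resolution of $X$ and $\mathfrak{C}(\Susp^\infty X)$ noted after Definition~\ref{defn:cobar_construction_fattened}, objectwise fibrancy coming from $\Loop^\infty F$, and the cofinality statements in Propositions~\ref{prop:left_cofinality_truncated_delta} and \ref{prop:punctured_cube_calculation_of_holim_truncated_delta}---are precisely what you assemble. The only quibble is the citation of the adjunction \cite[5.4]{Blomquist_Harper_integral_chains} in the $\holim_\Delta\wequiv\Tot^\res$ step, which is not really what does the work there; the comparison follows from left cofinality of $\Delta_\res\subset\Delta$ (equivalently, from your truncation-by-truncation argument via $\capP_0([n])$) applied to an objectwise fibrant cosimplicial space.
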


\begin{defn}
A pointed space $X$ is \emph{$\Loopt^\infty\Susp^\infty$-complete} if the natural coaugmentation $X\wequiv X^\wedge_{\Loopt^\infty\Susp^\infty}$ is a weak equivalence.
\end{defn}

\begin{prop}
\label{prop:fundamental_adjunction_derived_version}
There are natural zigzags of weak equivalences
\begin{align*}
  \Map_\coAlgKt(\Susp^\infty X,Y)\wequiv
  \Map(X,\holim\nolimits_\Delta \mathfrak{C}(Y))
\end{align*}
in $\CGHaus$; applying $\pi_0$ gives the natural isomorphism $[\Susp^\infty X,Y]_\Kt\Iso[X,\holim_\Delta \mathfrak{C}(Y)]$.
\end{prop}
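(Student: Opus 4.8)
The plan is to establish the zigzag of weak equivalences by chaining together the identifications already assembled in this background section, using the adjunction isomorphism at the cosimplicial level as the central bridge. The key observation is that everything reduces to the $(\Susp^\infty,\Loop^\infty)$ adjunction once the correct cosimplicial resolutions are in place. First I would start from the definition of $\Map_{\coAlgKt}(\Susp^\infty X, Y)$ as the restricted totalization $\Tot^\res\Map\bigl(\Susp^\infty X, F\Kt^\bullet Y\bigr)$, and recall from Remark \ref{rem:intuition_behind_resolution} that this is weakly equivalent to $\holim_\Delta\Hombold\bigl(\Susp^\infty X, F\Kt^\bullet Y\bigr)$ after realization. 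The heart of the matter is then Proposition \ref{prop:cosimplicial_resolutions_of_K_coalgebras_respect_adjunction_isos}, which provides the natural isomorphism of cosimplicial simplicial sets
\begin{align*}
  \Hombold\bigl(\Susp^\infty X, F\Kt^\bullet Y\bigr)\xrightarrow{\Iso}
  \Hombold\bigl(X, \Loopt^\infty\Kt^\bullet Y\bigr),
\end{align*}
and I would transport the homotopy limit across this isomorphism.

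Second, I would identify the target of the transported expression with a mapping space into the homotopy limit of the cobar construction. The crucial point here is that $\Loopt^\infty\Kt^\bullet Y$ is precisely the cosimplicial cobar construction $\mathfrak{C}(Y)$ from Definition \ref{defn:cobar_construction_fattened}, since $\mathfrak{C}(Y)^n = \Loopt^\infty\Kt^n Y$ by definition. Thus I would invoke the fact that the mapping-space functor $\Hombold(X,-)$ is a simplicial (right Quillen) functor and commutes with homotopy limits up to weak equivalence, yielding
\begin{align*}
  \holim_\Delta\Hombold\bigl(X,\mathfrak{C}(Y)\bigr)\wequiv
  \Hombold\bigl(X,\holim\nolimits_\Delta\mathfrak{C}(Y)\bigr),
\end{align*}
at least after suitable fibrancy adjustments; here the objectwise fibrancy built into the construction of $\mathfrak{C}(Y)$ (via $F$) is what licenses the exchange. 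Passing to realizations and reading off $\pi_0$ then gives the stated isomorphism $[\Susp^\infty X, Y]_\Kt\Iso[X,\holim_\Delta\mathfrak{C}(Y)]$ as an immediate consequence, since $\pi_0$ of a mapping space computes homotopy classes of maps.

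The main obstacle I anticipate is the careful management of fibrancy and the commutation of $\Hombold(X,-)$ with the holim over $\Delta$ (equivalently $\Tot^\res$). One cannot simply pull a mapping-space functor through a homotopy limit in general; the exchange is valid because $\Hombold(X,-)$ is corepresentable and the resolution is objectwise fibrant, so that the relevant $\holim$ is computed by a genuine limit of fibrations along the restricted cosimplicial indexing category. I would make this precise by comparing the Bousfield--Kan model $\holim^\BK$ with $\Tot^\res$ as in Proposition \ref{prop:punctured_cube_calculation_of_holim_truncated_delta} and the conventions recalled from Bousfield--Kan \cite{Bousfield_Kan}, and then appealing to the enriched adjunction so that $\Hombold(X,-)$ preserves the cosimplicial fibrancy conditions defining $\Tot^\res$. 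Once this compatibility is secured, the naturality in $X$ and $Y$ propagates automatically from the naturality already asserted in Proposition \ref{prop:cosimplicial_resolutions_of_K_coalgebras_respect_adjunction_isos}, and the $\pi_0$ statement follows formally.
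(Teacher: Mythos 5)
Your proposal is correct and follows essentially the same route as the paper's proof: the paper runs the identical chain in the opposite direction, starting from $\Hombold(X,\holim_\Delta\mathfrak{C}(Y))$, replacing $\holim_\Delta$ by $\Tot^\res$, commuting the corepresentable $\Hombold(X,-)$ past $\Tot^\res$ (as a strict isomorphism, since it is a genuine limit), applying the adjunction isomorphism of Proposition \ref{prop:cosimplicial_resolutions_of_K_coalgebras_respect_adjunction_isos}, and then realizing. The only detail you gloss over that the paper handles by citation (to \cite[6.14]{Ching_Harper_derived_Koszul_duality}) is the compatibility of realization with the restricted totalization, but this is a technical point rather than a difference in strategy.
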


\begin{proof}
There are natural zigzags of weak equivalences of the form
\begin{align*}
  \Hombold(X,\holim\nolimits_\Delta \mathfrak{C}(Y))
  &\wequiv\Hombold\bigl(X,\Tot^\res \mathfrak{C}(Y)\bigr)\\
  &\Iso\Tot^\res\Hombold\bigl(X,\Loopt^\infty \Kt^\bullet Y\bigr)\\
  &\Iso\Tot^\res\Hombold\bigl(\Susp^\infty X,F\Kt^\bullet Y\bigr)\\
  &\Iso\Hombold_\coAlgKt(\Susp^\infty X,Y)
\end{align*}
in $\sSet$; applying realization, together with \cite[6.14]{Ching_Harper_derived_Koszul_duality} finishes the proof.
\end{proof}

\begin{prop}
\label{prop:formal_adjunction_and_iso_argument}
Let $X,X'$ be pointed spaces. If $X'$ is $\Loopt^\infty\Susp^\infty$-complete and fibrant, then there is a natural zigzag
\begin{align*}
  \Susp^\infty\colon\thinspace\Map(X,X')\xrightarrow{\wequiv}
  \Map_\coAlgKt(\Susp^\infty X,\Susp^\infty X')
\end{align*}
of weak equivalences; applying $\pi_0$ gives the map $[f]\mapsto[\Susp^\infty f]$.
\end{prop}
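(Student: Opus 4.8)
The plan is to deduce this from the derived fundamental adjunction (Proposition \ref{prop:fundamental_adjunction_derived_version}) by specializing the second variable to $Y=\Susp^\infty X'$, and then using the completeness hypothesis to replace the resulting homotopy limit by $X'$ itself. In other words, the whole statement should fall out of the adjunction zigzag together with the completion identification, once the completeness and fibrancy of $X'$ are brought to bear.

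First I would instantiate Proposition \ref{prop:fundamental_adjunction_derived_version} at $Y=\Susp^\infty X'$ to obtain a natural zigzag of weak equivalences
\begin{align*}
  \Map_\coAlgKt(\Susp^\infty X,\Susp^\infty X')\wequiv
  \Map(X,\holim\nolimits_\Delta \mathfrak{C}(\Susp^\infty X')).
\end{align*}
Next I would apply Proposition \ref{prop:zigzag_of_weak_equivalences_tot_and_stabilization_completion} to $X'$, identifying $\holim_\Delta \mathfrak{C}(\Susp^\infty X')$ with the completion ${X'}^\wedge_{\Loopt^\infty\Susp^\infty}$ up to a natural zigzag of weak equivalences; applying $\Map(X,-)$ transports this into a zigzag of weak equivalences
\begin{align*}
  \Map(X,\holim\nolimits_\Delta \mathfrak{C}(\Susp^\infty X'))\wequiv
  \Map(X,{X'}^\wedge_{\Loopt^\infty\Susp^\infty}).
\end{align*}
Finally, since $X'$ is $\Loopt^\infty\Susp^\infty$-complete, the coaugmentation $X'\rarrow{X'}^\wedge_{\Loopt^\infty\Susp^\infty}$ is a weak equivalence between fibrant objects---the target is a homotopy limit, hence fibrant, and $X'$ is fibrant by hypothesis---so $\Map(X,-)$ carries it to a weak equivalence
\begin{align*}
  \Map(X,X')\wequiv\Map(X,{X'}^\wedge_{\Loopt^\infty\Susp^\infty}).
\end{align*}
Concatenating the three zigzags produces the asserted weak equivalence $\Map(X,X')\xrightarrow{\wequiv}\Map_\coAlgKt(\Susp^\infty X,\Susp^\infty X')$.

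The main obstacle will be identifying this concatenated zigzag with the stabilization map $\Susp^\infty$ of Proposition \ref{prop:induced_map_on_mapping_spaces_built_from_stabilization}, and hence confirming that applying $\pi_0$ yields the map $[f]\mapsto[\Susp^\infty f]$. This is a compatibility check rather than a connectivity estimate: I would trace the identity map of $\Susp^\infty X'$ through the adjunction isomorphisms underlying Proposition \ref{prop:fundamental_adjunction_derived_version} and invoke the standard formal property of adjunctions---that the functor-induced map on mapping spaces corresponds, under the adjunction, to postcomposition with the derived unit. Since the derived unit is exactly the completion coaugmentation $X'\rarrow\holim_\Delta \mathfrak{C}(\Susp^\infty X')$ appearing above, the concatenated zigzag agrees with $\Susp^\infty$ after passing to homotopy, and the $\pi_0$ statement follows by unwinding the definition of the derived $\Kt$-coalgebra map $\Susp^\infty(f)$. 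I expect this bookkeeping to be entirely formal, relying only on the naturality already established in Propositions \ref{prop:fundamental_adjunction_derived_version} and \ref{prop:zigzag_of_weak_equivalences_tot_and_stabilization_completion}.
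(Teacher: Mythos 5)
The paper itself states this proposition without proof (it closes the background section), so there is no in-paper argument to compare against; your proposal supplies the evidently intended one, built only from Propositions \ref{prop:fundamental_adjunction_derived_version} and \ref{prop:zigzag_of_weak_equivalences_tot_and_stabilization_completion}, and it is correct. The three-step reduction---specialize the derived adjunction at $Y=\Susp^\infty X'$, identify $\holim_\Delta\mathfrak{C}(\Susp^\infty X')$ with the completion, and use completeness plus fibrancy of $X'$ (together with cofibrancy of $X$, automatic for simplicial sets, so that $\Map(X,-)$ preserves weak equivalences between fibrant objects) is exactly right, and you correctly isolate the one nontrivial point: the concatenated zigzag must be identified with the specific map of Proposition \ref{prop:induced_map_on_mapping_spaces_built_from_stabilization}, which comes down to the standard triangle-identity argument that the adjunction transports $\Susp^\infty$ on mapping spaces to postcomposition with the derived unit, i.e.\ the completion coaugmentation.
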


\bibliographystyle{plain}
\bibliography{SuspensionSpectra}

\end{document}